 \def\dated#1{\def\thedate{#1}}%
\newdimen\high%
\newdimen\ul%
\newdimen\wdth%
\def\ratchet#1#2{\ifnum#1<#2\global #1=#2\fi}%
\def\ifnextchar#1#2#3{\let\@tempe%
#1\def\@tempa{#2}\def\@tempb{#3}\futurelet%
    \@tempc\@ifnch}%
\def\@ifnch{\ifx \@tempc \@sptoken \let\@tempd\@xifnch%
      \else \ifx \@tempc \@tempe\let\@tempd\@tempa\else\let\@tempd\@tempb\fi%
      \fi \@tempd}%
\def\:{\let\@sptoken= } \:  
\def\:{\@xifnch} \expandafter\def\: {\futurelet\@tempc\@ifnch}%
\let\ifnextchar\@ifnextchar%
\newdimen\axis \axis=\fontdimen22\textfont2%
\def\scalefactor#1{\ul=#1\ul \X@xbase=#1\X@xbase \Y@ybase=#1\Y@ybase}%
\def\fontscale#1{%
\if#1h\relax%
\font\xydashfont=xydash10 scaled \magstephalf%
\font\xyatipfont=xyatip10 scaled \magstephalf%
\font\xybtipfont=xybtip10 scaled \magstephalf%
\font\xybsqlfont=xybsql10 scaled \magstephalf%
\font\xycircfont=xycirc10 scaled \magstephalf%
\else%
\font\xydashfont=xydash10 scaled \magstep#1%
\font\xyatipfont=xyatip10 scaled \magstep#1%
\font\xybtipfont=xybtip10 scaled \magstep#1%
\font\xybsqlfont=xybsql10 scaled \magstep#1%
\font\xycircfont=xycirc10 scaled \magstep#1%
\fi}%
\def\bfig{\vcenter\bgroup\xy}%
\def\efig{\endxy\egroup}%
\def\car#1#2\nil{#1}%
\def\morphism{\ifnextchar({\morphismp}{\morphismp(0,0)}}%
\def\morphismp(#1){\ifnextchar|{\morphismpp(#1)}{\morphismpp(#1)|a|}}%
\def\morphismpp(#1)|#2|{\ifnextchar/{\morphismppp(#1)|#2|}%
    {\morphismppp(#1)|#2|/>/}}%
\def\morphismppp(#1)|#2|/#3/{%
    \ifnextchar<{\morphismpppp(#1)|#2|/#3/}%
    {\morphismpppp(#1)|#2|/#3/<\default,0>}}%
\def\morphismpppp(#1,#2)|#3|/#4/<#5,#6>[#7`#8;#9]{%
\xend#1\advance \xend by #5%
\yend#2\advance \yend by #6%
\domorphism(#1,#2)|#3|/#4/<#5,#6>[{#7}`{#8};{#9}]}%
\def\domorphism(#1,#2)|#3|/#4/<#5,#6>[#7`#8;#9]{%
\def\next{\car#4.\nil}%
\if@\next\relax%
 \if#3l%
  \ifnum #6>0%
   \POS(#1,#2)*+!!<0ex,\axis>{#7}\ar#4^-{#9} (\xend,\yend)*+!!<0ex,\axis>{#8}%
  \else%
   \POS(#1,#2)*+!!<0ex,\axis>{#7}\ar#4_-{#9} (\xend,\yend)*+!!<0ex,\axis>{#8}%
  \fi%
 \else \if#3m%
    \setbox0\hbox{$#9$}%
   \ifdim \wd0=0pt%
     \POS(#1,#2)*+!!<0ex,\axis>{#7}\ar#4 (\xend,\yend)*+!!<0ex,\axis>{#8}%
   \else%
     \POS(#1,#2)*+!!<0ex,\axis>{#7}\ar#4|-*+<1pt,4pt>{\labelstyle#9}%
       (\xend,\yend)*+!!<0ex,\axis>{#8}%
   \fi%
 \else \if#3r%
  \ifnum #6<0%
   \POS(#1,#2)*+!!<0ex,\axis>{#7}\ar#4^-{#9} (\xend,\yend)*+!!<0ex,\axis>{#8}%
  \else%
   \POS(#1,#2)*+!!<0ex,\axis>{#7}\ar#4_-{#9} (\xend,\yend)*+!!<0ex,\axis>{#8}%
  \fi%
 \else \if#3a%
  \ifnum #5>0%
   \POS(#1,#2)*+!!<0ex,\axis>{#7}\ar#4^-{#9} (\xend,\yend)*+!!<0ex,\axis>{#8}%
  \else%
   \POS(#1,#2)*+!!<0ex,\axis>{#7}\ar#4_-{#9} (\xend,\yend)*+!!<0ex,\axis>{#8}%
  \fi%
 \else \if#3b%
  \ifnum #5<0%
   \POS(#1,#2)*+!!<0ex,\axis>{#7}\ar#4^-{#9} (\xend,\yend)*+!!<0ex,\axis>{#8}%
  \else%
   \POS(#1,#2)*+!!<0ex,\axis>{#7}\ar#4_-{#9} (\xend,\yend)*+!!<0ex,\axis>{#8}%
  \fi%
 \else%
   \POS(#1,#2)*+!!<0ex,\axis>{#7}\ar#4 (\xend,\yend)*+!!<0ex,\axis>{#8}%
 \fi\fi\fi\fi\fi%
\else%
 \if#3l%
  \ifnum #6>0%
   \POS(#1,#2)*+!!<0ex,\axis>{#7}\ar@{#4}^-{#9} (\xend,\yend)*+!!<0ex,\axis>{#8}%
  \else%
   \POS(#1,#2)*+!!<0ex,\axis>{#7}\ar@{#4}_-{#9} (\xend,\yend)*+!!<0ex,\axis>{#8}%
  \fi%
 \else \if#3m%
    \setbox0\hbox{$#9$}%
   \ifdim \wd0=0pt%
     \POS(#1,#2)*+!!<0ex,\axis>{#7}\ar@{#4} (\xend,\yend)*+!!<0ex,\axis>{#8}%
   \else%
     \POS(#1,#2)*+!!<0ex,\axis>{#7}\ar@{#4}|-*+<1pt,4pt>{\labelstyle#9}%
         (\xend,\yend)*+!!<0ex,\axis>{#8}%
   \fi%
 \else \if#3r%
  \ifnum #6<0%
   \POS(#1,#2)*+!!<0ex,\axis>{#7}\ar@{#4}^-{#9} (\xend,\yend)*+!!<0ex,\axis>{#8}%
  \else%
   \POS(#1,#2)*+!!<0ex,\axis>{#7}\ar@{#4}_-{#9} (\xend,\yend)*+!!<0ex,\axis>{#8}%
  \fi%
 \else \if#3a%
  \ifnum #5>0%
   \POS(#1,#2)*+!!<0ex,\axis>{#7}\ar@{#4}^-{#9} (\xend,\yend)*+!!<0ex,\axis>{#8}%
  \else%
   \POS(#1,#2)*+!!<0ex,\axis>{#7}\ar@{#4}_-{#9} (\xend,\yend)*+!!<0ex,\axis>{#8}%
  \fi%
 \else \if#3b%
  \ifnum #5<0%
   \POS(#1,#2)*+!!<0ex,\axis>{#7}\ar@{#4}^-{#9} (\xend,\yend)*+!!<0ex,\axis>{#8}%
  \else%
   \POS(#1,#2)*+!!<0ex,\axis>{#7}\ar@{#4}_-{#9} (\xend,\yend)*+!!<0ex,\axis>{#8}%
  \fi%
 \else%
   \POS(#1,#2)*+!!<0ex,\axis>{#7}\ar@{#4} (\xend,\yend)*+!!<0ex,\axis>{#8}%
 \fi\fi\fi\fi\fi%
\fi\ignorespaces}%
\def\vect(#1,#2)/#3/<#4,#5>{%
 \xend#1 \yend#2 \advance\xend by #4 \advance\yend by #5%
     \POS(#1,#2)\ar#3 (\xend,\yend)}%
\def\squarepppp(#1,#2)|#3|/#4`#5`#6`#7/<#8>[#9]{%
\xpos#1\ypos#2%
\def\next|##1##2##3##4|{%
 \def\xa{##1}\def\xb{##2}\def\xc{##3}\def\xd{##4}\ignorespaces}%
\next|#3|%
\def\next<##1,##2>{\deltax=##1\deltay=##2\ignorespaces}%
\next<#8>%
\def\next[##1`##2`##3`##4;##5`##6`##7`##8]{%
    \def\nodea{##1}\def\nodeb{##2}\def\nodec{##3}\def\noded{##4}%
    \def\labela{##5}\def\labelb{##6}\def\labelc{##7}\def\labeld{##8}\ignorespaces}%
\next[#9]%
\morphism(\xpos,\ypos)|\xd|/{#7}/<\deltax,0>[\nodec`\noded;\labeld]%
\advance \ypos by \deltay%
\morphism(\xpos,\ypos)|\xb|/{#5}/<0,-\deltay>[\nodea`\nodec;\labelb]%
\morphism(\xpos,\ypos)|\xa|/{#4}/<\deltax,0>[\nodea`\nodeb;\labela]%
 \advance \xpos by \deltax%
\morphism(\xpos,\ypos)|\xc|/{#6}/<0,-\deltay>[\nodeb`\noded;\labelc]%
\ignorespaces}%
\def\square{\ifnextchar({\squarep}{\squarep(0,0)}}%
\def\squarep(#1){\ifnextchar|{\squarepp(#1)}{\squarepp(#1)|alrb|}}%
\def\squarepp(#1)|#2|{\ifnextchar/{\squareppp(#1)|#2|}%
    {\squareppp(#1)|#2|/>`>`>`>/}}%
\def\squareppp(#1)|#2|/#3`#4`#5`#6/{%
    \ifnextchar<{\squarepppp(#1)|#2|/#3`#4`#5`#6/}%
    {\squarepppp(#1)|#2|/#3`#4`#5`#6/<\default,\default>}}%
\def\ptrianglepppp(#1,#2)|#3|/#4`#5`#6/<#7>[#8]{%
\xpos#1\ypos#2%
\def\next|##1##2##3|{\def\xa{##1}\def\xb{##2}\def\xc{##3}}%
\next|#3|%
\def\next<##1,##2>{\deltax=##1\deltay=##2\ignorespaces}%
\next<#7>%
\def\next[##1`##2`##3;##4`##5`##6]{%
    \def\nodea{##1}\def\nodeb{##2}\def\nodec{##3}%
    \def\labela{##4}\def\labelb{##5}\def\labelc{##6}}%
\next[#8]%
\advance\ypos by \deltay%
\morphism(\xpos,\ypos)|\xa|/{#4}/<\deltax,0>[\nodea`\nodeb;\labela]%
\morphism(\xpos,\ypos)|\xb|/{#5}/<0,-\deltay>[\nodea`\nodec;\labelb]%
\advance\xpos by \deltax%
\morphism(\xpos,\ypos)|\xc|/{#6}/<-\deltax,-\deltay>[\nodeb`\nodec;\labelc]%
\ignorespaces}%
\def\qtrianglepppp(#1,#2)|#3|/#4`#5`#6/<#7>[#8]{%
\xpos#1\ypos#2%
\def\next|##1##2##3|{\def\xa{##1}\def\xb{##2}\def\xc{##3}}%
\next|#3|%
\def\next<##1,##2>{\deltax=##1\deltay=##2\ignorespaces}%
\next<#7>%
\def\next[##1`##2`##3;##4`##5`##6]{%
    \def\nodea{##1}\def\nodeb{##2}\def\nodec{##3}%
    \def\labela{##4}\def\labelb{##5}\def\labelc{##6}}%
\next[#8]%
\advance\ypos by \deltay%
\morphism(\xpos,\ypos)|\xa|/{#4}/<\deltax,0>[\nodea`\nodeb;\labela]%
\morphism(\xpos,\ypos)|\xb|/{#5}/<\deltax,-\deltay>[\nodea`\nodec;\labelb]%
\advance\xpos by \deltax%
\morphism(\xpos,\ypos)|\xc|/{#6}/<0,-\deltay>[\nodeb`\nodec;\labelc]%
\ignorespaces}%
\def\dtrianglepppp(#1,#2)|#3|/#4`#5`#6/<#7>[#8]{%
\xpos#1\ypos#2%
\def\next|##1##2##3|{\def\xa{##1}\def\xb{##2}\def\xc{##3}}%
\next|#3|%
\def\next<##1,##2>{\deltax=##1\deltay=##2\ignorespaces}%
\next<#7>%
\def\next[##1`##2`##3;##4`##5`##6]{%
    \def\nodea{##1}\def\nodeb{##2}\def\nodec{##3}%
    \def\labela{##4}\def\labelb{##5}\def\labelc{##6}}%
\next[#8]%
\morphism(\xpos,\ypos)|\xc|/{#6}/<\deltax,0>[\nodeb`\nodec;\labelc]%
\advance\ypos by \deltay\advance \xpos by \deltax%
\morphism(\xpos,\ypos)|\xa|/{#4}/<-\deltax,-\deltay>[\nodea`\nodeb;\labela]%
\morphism(\xpos,\ypos)|\xb|/{#5}/<0,-\deltay>[\nodea`\nodec;\labelb]%
\ignorespaces}%
\def\btrianglepppp(#1,#2)|#3|/#4`#5`#6/<#7>[#8]{%
\xpos#1\ypos#2%
\def\next|##1##2##3|{\def\xa{##1}\def\xb{##2}\def\xc{##3}}%
\next|#3|%
\def\next<##1,##2>{\deltax=##1\deltay=##2\ignorespaces}%
\next<#7>%
\def\next[##1`##2`##3;##4`##5`##6]{%
    \def\nodea{##1}\def\nodeb{##2}\def\nodec{##3}%
    \def\labela{##4}\def\labelb{##5}\def\labelc{##6}}%
\next[#8]%
\morphism(\xpos,\ypos)|\xc|/{#6}/<\deltax,0>[\nodeb`\nodec;\labelc]%
\advance\ypos by \deltay%
\morphism(\xpos,\ypos)|\xa|/{#4}/<0,-\deltay>[\nodea`\nodeb;\labela]%
\morphism(\xpos,\ypos)|\xb|/{#5}/<\deltax,-\deltay>[\nodea`\nodec;\labelb]%
\ignorespaces}%
\def\Atrianglepppp(#1,#2)|#3|/#4`#5`#6/<#7>[#8]{%
\xpos#1\ypos#2%
\def\next|##1##2##3|{\def\xa{##1}\def\xb{##2}\def\xc{##3}}%
\next|#3|%
\def\next<##1,##2>{\deltax=##1\deltay=##2\ignorespaces}%
\next<#7>%
\def\next[##1`##2`##3;##4`##5`##6]{%
    \def\nodea{##1}\def\nodeb{##2}\def\nodec{##3}%
    \def\labela{##4}\def\labelb{##5}\def\labelc{##6}}%
\next[#8]%
\multiply\deltax by 2%
\morphism(\xpos,\ypos)|\xc|/{#6}/<\deltax,0>[\nodeb`\nodec;\labelc]%
\divide\deltax by 2%
\advance\ypos by \deltay\advance\xpos by \deltax%
\morphism(\xpos,\ypos)|\xa|/{#4}/<-\deltax,-\deltay>[\nodea`\nodeb;\labela]%
\morphism(\xpos,\ypos)|\xb|/{#5}/<\deltax,-\deltay>[\nodea`\nodec;\labelb]%
\ignorespaces}%
\def\Vtrianglepppp(#1,#2)|#3|/#4`#5`#6/<#7>[#8]{%
\xpos#1\ypos#2%
\def\next|##1##2##3|{\def\xa{##1}\def\xb{##2}\def\xc{##3}}%
\next|#3|%
\def\next<##1,##2>{\deltax=##1\deltay=##2\ignorespaces}%
\next<#7>%
\def\next[##1`##2`##3;##4`##5`##6]{%
    \def\nodea{##1}\def\nodeb{##2}\def\nodec{##3}%
    \def\labela{##4}\def\labelb{##5}\def\labelc{##6}}%
\next[#8]%
\advance\ypos by \deltay%
\morphism(\xpos,\ypos)|\xb|/{#5}/<\deltax,-\deltay>[\nodea`\nodec;\labelb]%
\multiply\deltax by 2%
\morphism(\xpos,\ypos)|\xa|/{#4}/<\deltax,0>[\nodea`\nodeb;\labela]%
\advance\xpos by \deltax \divide \deltax by 2%
\morphism(\xpos,\ypos)|\xc|/{#6}/<-\deltax,-\deltay>[\nodeb`\nodec;\labelc]%
\ignorespaces}%
\def\Ctrianglepppp(#1,#2)|#3|/#4`#5`#6/<#7>[#8]{%
\xpos#1\ypos#2%
\def\next|##1##2##3|{\def\xa{##1}\def\xb{##2}\def\xc{##3}}%
\next|#3|%
\def\next<##1,##2>{\deltax=##1\deltay=##2\ignorespaces}%
\next<#7>%
\def\next[##1`##2`##3;##4`##5`##6]{%
    \def\nodea{##1}\def\nodeb{##2}\def\nodec{##3}%
    \def\labela{##4}\def\labelb{##5}\def\labelc{##6}}%
\next[#8]%
\advance \ypos by \deltay%
\morphism(\xpos,\ypos)|\xc|/{#6}/<\deltax,-\deltay>[\nodeb`\nodec;\labelc]%
\advance\ypos by \deltay \advance \xpos by \deltax%
\morphism(\xpos,\ypos)|\xa|/{#4}/<-\deltax,-\deltay>[\nodea`\nodeb;\labela]%
\multiply\deltay by 2%
\morphism(\xpos,\ypos)|\xb|/{#5}/<0,-\deltay>[\nodea`\nodec;\labelb]%
\ignorespaces}%
\def\Dtrianglepppp(#1,#2)|#3|/#4`#5`#6/<#7>[#8]{%
\xpos#1\ypos#2%
\def\next|##1##2##3|{\def\xa{##1}\def\xb{##2}\def\xc{##3}}%
\next|#3|%
\def\next<##1,##2>{\deltax=##1\deltay=##2\ignorespaces}%
\next<#7>%
\def\next[##1`##2`##3;##4`##5`##6]{%
    \def\nodea{##1}\def\nodeb{##2}\def\nodec{##3}%
    \def\labela{##4}\def\labelb{##5}\def\labelc{##6}}%
\next[#8]%
\advance\xpos by \deltax \advance\ypos by \deltay%
\morphism(\xpos,\ypos)|\xc|/{#6}/<-\deltax,-\deltay>[\nodeb`\nodec;\labelc]%
\advance\xpos by -\deltax \advance\ypos by \deltay%
\morphism(\xpos,\ypos)|\xb|/{#5}/<\deltax,-\deltay>[\nodea`\nodeb;\labelb]%
\multiply \deltay by 2%
\morphism(\xpos,\ypos)|\xa|/{#4}/<0,-\deltay>[\nodea`\nodec;\labela]%
\ignorespaces}%
\def\ptrianglep(#1){\ifnextchar|{\ptrianglepp(#1)}{\ptrianglepp(#1)|alr|}}%
\def\ptrianglepp(#1)|#2|{\ifnextchar/{\ptriangleppp(#1)|#2|}%
    {\ptriangleppp(#1)|#2|/>`>`>/}}%
\def\ptriangleppp(#1)|#2|/#3`#4`#5/{%
    \ifnextchar<{\ptrianglepppp(#1)|#2|/#3`#4`#5/}%
    {\ptrianglepppp(#1)|#2|/#3`#4`#5/<\default,\default>}}%
\def\qtrianglep(#1){\ifnextchar|{\qtrianglepp(#1)}{\qtrianglepp(#1)|alr|}}%
\def\qtrianglepp(#1)|#2|{\ifnextchar/{\qtriangleppp(#1)|#2|}%
    {\qtriangleppp(#1)|#2|/>`>`>/}}%
\def\qtriangleppp(#1)|#2|/#3`#4`#5/{%
    \ifnextchar<{\qtrianglepppp(#1)|#2|/#3`#4`#5/}%
    {\qtrianglepppp(#1)|#2|/#3`#4`#5/<\default,\default>}}%
\def\dtrianglep(#1){\ifnextchar|{\dtrianglepp(#1)}{\dtrianglepp(#1)|lrb|}}%
\def\dtrianglepp(#1)|#2|{\ifnextchar/{\dtriangleppp(#1)|#2|}%
    {\dtriangleppp(#1)|#2|/>`>`>/}}%
\def\dtriangleppp(#1)|#2|/#3`#4`#5/{%
    \ifnextchar<{\dtrianglepppp(#1)|#2|/#3`#4`#5/}%
    {\dtrianglepppp(#1)|#2|/#3`#4`#5/<\default,\default>}}%
\def\btrianglep(#1){\ifnextchar|{\btrianglepp(#1)}{\btrianglepp(#1)|lrb|}}%
\def\btrianglepp(#1)|#2|{\ifnextchar/{\btriangleppp(#1)|#2|}%
    {\btriangleppp(#1)|#2|/>`>`>/}}%
\def\btriangleppp(#1)|#2|/#3`#4`#5/{%
    \ifnextchar<{\btrianglepppp(#1)|#2|/#3`#4`#5/}%
    {\btrianglepppp(#1)|#2|/#3`#4`#5/<\default,\default>}}%
\def\Atrianglep(#1){\ifnextchar|{\Atrianglepp(#1)}{\Atrianglepp(#1)|lrb|}}%
\def\Atrianglepp(#1)|#2|{\ifnextchar/{\Atriangleppp(#1)|#2|}%
    {\Atriangleppp(#1)|#2|/>`>`>/}}%
\def\Atriangleppp(#1)|#2|/#3`#4`#5/{%
    \ifnextchar<{\Atrianglepppp(#1)|#2|/#3`#4`#5/}%
    {\Atrianglepppp(#1)|#2|/#3`#4`#5/<\default,\default>}}%
\def\Vtrianglep(#1){\ifnextchar|{\Vtrianglepp(#1)}{\Vtrianglepp(#1)|alb|}}%
\def\Vtrianglepp(#1)|#2|{\ifnextchar/{\Vtriangleppp(#1)|#2|}%
    {\Vtriangleppp(#1)|#2|/>`>`>/}}%
\def\Vtriangleppp(#1)|#2|/#3`#4`#5/{%
    \ifnextchar<{\Vtrianglepppp(#1)|#2|/#3`#4`#5/}%
    {\Vtrianglepppp(#1)|#2|/#3`#4`#5/<\default,\default>}}%
\def\Ctrianglep(#1){\ifnextchar|{\Ctrianglepp(#1)}{\Ctrianglepp(#1)|arb|}}%
\def\Ctrianglepp(#1)|#2|{\ifnextchar/{\Ctriangleppp(#1)|#2|}%
    {\Ctriangleppp(#1)|#2|/>`>`>/}}%
\def\Ctriangleppp(#1)|#2|/#3`#4`#5/{%
    \ifnextchar<{\Ctrianglepppp(#1)|#2|/#3`#4`#5/}%
    {\Ctrianglepppp(#1)|#2|/#3`#4`#5/<\default,\default>}}%
\def\Dtrianglep(#1){\ifnextchar|{\Dtrianglepp(#1)}{\Dtrianglepp(#1)|alb|}}%
\def\Dtrianglepp(#1)|#2|{\ifnextchar/{\Dtriangleppp(#1)|#2|}%
    {\Dtriangleppp(#1)|#2|/>`>`>/}}%
\def\Dtriangleppp(#1)|#2|/#3`#4`#5/{%
    \ifnextchar<{\Dtrianglepppp(#1)|#2|/#3`#4`#5/}%
    {\Dtrianglepppp(#1)|#2|/#3`#4`#5/<\default,\default>}}%
\def\Atrianglepairpppp(#1)|#2|/#3`#4`#5`#6`#7/<#8>[#9]{%
\def\next(##1,##2){\xpos##1\ypos##2}%
\next(#1)%
\def\next|##1##2##3##4##5|{\def\xa{##1}\def\xb{##2}%
\def\xc{##3}\def\xd{##4}\def\xe{##5}}%
\next|#2|%
\def\next<##1,##2>{\deltax=##1\deltay=##2\ignorespaces}%
\next<#8>%
\def\next[##1`##2`##3`##4;##5`##6`##7`##8`##9]{%
 \def\nodea{##1}\def\nodeb{##2}\def\nodec{##3}\def\noded{##4}%
 \def\labela{##5}\def\labelb{##6}\def\labelc{##7}\def\labeld{##8}\def\labele{##9}}%
\next[#9]%
\morphism(\xpos,\ypos)|\xd|/{#6}/<\deltax,0>[\nodeb`\nodec;\labeld]%
\advance\xpos by \deltax%
\morphism(\xpos,\ypos)|\xe|/{#7}/<\deltax,0>[\nodec`\noded;\labele]%
\advance\ypos by \deltay%
\morphism(\xpos,\ypos)|\xa|/{#3}/<-\deltax,-\deltay>[\nodea`\nodeb;\labela]%
\morphism(\xpos,\ypos)|\xb|/{#4}/<0,-\deltay>[\nodea`\nodec;\labelb]%
\morphism(\xpos,\ypos)|\xc|/{#5}/<\deltax,-\deltay>[\nodea`\noded;\labelc]%
\ignorespaces}%
\def\Vtrianglepairpppp(#1)|#2|/#3`#4`#5`#6`#7/<#8>[#9]{%
\def\next(##1,##2){\xpos##1\ypos##2}%
\next(#1)%
\def\next|##1##2##3##4##5|{\def\xa{##1}\def\xb{##2}%
\def\xc{##3}\def\xd{##4}\def\xe{##5}}%
\next|#2|%
\def\next<##1,##2>{\deltax=##1\deltay=##2\ignorespaces}%
\next<#8>%
\def\next[##1`##2`##3`##4;##5`##6`##7`##8`##9]{%
 \def\nodea{##1}\def\nodeb{##2}\def\nodec{##3}\def\noded{##4}%
 \def\labela{##5}\def\labelb{##6}\def\labelc{##7}\def\labeld{##8}\def\labele{##9}}%
\next[#9]%
\advance\ypos by \deltay%
\morphism(\xpos,\ypos)|\xa|/{#3}/<\deltax,0>[\nodea`\nodeb;\labela]%
\morphism(\xpos,\ypos)|\xc|/{#5}/<\deltax,-\deltay>[\nodea`\noded;\labelc]%
\advance\xpos by \deltax%
\morphism(\xpos,\ypos)|\xb|/{#4}/<\deltax,0>[\nodeb`\nodec;\labelb]%
\morphism(\xpos,\ypos)|\xd|/{#6}/<0,-\deltay>[\nodeb`\noded;\labeld]%
\advance\xpos by \deltax%
\morphism(\xpos,\ypos)|\xe|/{#7}/<-\deltax,-\deltay>[\nodec`\noded;\labele]%
\ignorespaces}%
\def\Ctrianglepairpppp(#1)|#2|/#3`#4`#5`#6`#7/<#8>[#9]{%
\def\next(##1,##2){\xpos##1\ypos##2}%
\next(#1)%
\def\next|##1##2##3##4##5|{\def\xa{##1}\def\xb{##2}%
\def\xc{##3}\def\xd{##4}\def\xe{##5}}%
\next|#2|%
\def\next<##1,##2>{\deltax=##1\deltay=##2\ignorespaces}%
\next<#8>%
\def\next[##1`##2`##3`##4;##5`##6`##7`##8`##9]{%
 \def\nodea{##1}\def\nodeb{##2}\def\nodec{##3}\def\noded{##4}%
 \def\labela{##5}\def\labelb{##6}\def\labelc{##7}\def\labeld{##8}\def\labele{##9}}%
\next[#9]%
\advance\ypos by \deltay%
\morphism(\xpos,\ypos)|\xe|/{#7}/<0,-\deltay>[\nodec`\noded;\labele]%
\advance\xpos by -\deltax%
\morphism(\xpos,\ypos)|\xc|/{#5}/<\deltax,0>[\nodeb`\nodec;\labelc]%
\morphism(\xpos,\ypos)|\xd|/{#6}/<\deltax,-\deltay>[\nodeb`\noded;\labeld]%
\advance\ypos by \deltay%
\advance\xpos by \deltax%
\morphism(\xpos,\ypos)|\xa|/{#3}/<-\deltax,-\deltay>[\nodea`\nodeb;\labela]%
\morphism(\xpos,\ypos)|\xb|/{#4}/<0,-\deltay>[\nodea`\nodec;\labelb]%
\ignorespaces}%
\def\Dtrianglepairpppp(#1)|#2|/#3`#4`#5`#6`#7/<#8>[#9]{%
\def\next(##1,##2){\xpos##1\ypos##2}%
\next(#1)%
\def\next|##1##2##3##4##5|{\def\xa{##1}\def\xb{##2}%
\def\xc{##3}\def\xd{##4}\def\xe{##5}}%
\next|#2|%
\def\next<##1,##2>{\deltax=##1\deltay=##2\ignorespaces}%
\next<#8>%
\def\next[##1`##2`##3`##4;##5`##6`##7`##8`##9]{%
 \def\nodea{##1}\def\nodeb{##2}\def\nodec{##3}\def\noded{##4}%
 \def\labela{##5}\def\labelb{##6}\def\labelc{##7}\def\labeld{##8}\def\labele{##9}}%
\next[#9]%
\advance\ypos by \deltay%
\morphism(\xpos,\ypos)|\xc|/{#5}/<\deltax,0>[\nodeb`\nodec;\labelc]%
\morphism(\xpos,\ypos)|\xd|/{#6}/<0,-\deltay>[\nodeb`\noded;\labeld]%
\advance\ypos by \deltay%
\morphism(\xpos,\ypos)|\xa|/{#3}/<0,-\deltay>[\nodea`\nodeb;\labela]%
\morphism(\xpos,\ypos)|\xb|/{#4}/<\deltax,-\deltay>[\nodea`\nodec;\labelb]%
\advance\ypos by -\deltay%
\advance\xpos by \deltax%
\morphism(\xpos,\ypos)|\xe|/{#7}/<-\deltax,-\deltay>[\nodec`\noded;\labele]%
\ignorespaces}%
\def\Atrianglepairp(#1){\ifnextchar|{\Atrianglepairpp(#1)}%
{\Atrianglepairpp(#1)|lmrbb|}}%
\def\Atrianglepairpp(#1)|#2|{\ifnextchar/{\Atrianglepairppp(#1)|#2|}%
    {\Atrianglepairppp(#1)|#2|/>`>`>`>`>/}}%
\def\Atrianglepairppp(#1)|#2|/#3`#4`#5`#6`#7/{%
    \ifnextchar<{\Atrianglepairpppp(#1)|#2|/#3`#4`#5`#6`#7/}%
    {\Atrianglepairpppp(#1)|#2|/#3`#4`#5`#6`#7/<\default,\default>}}%
\def\Vtrianglepairp(#1){\ifnextchar|{\Vtrianglepairpp(#1)}%
{\Vtrianglepairpp(#1)|aalmr|}}%
\def\Vtrianglepairpp(#1)|#2|{\ifnextchar/{\Vtrianglepairppp(#1)|#2|}%
    {\Vtrianglepairppp(#1)|#2|/>`>`>`>`>/}}%
\def\Vtrianglepairppp(#1)|#2|/#3`#4`#5`#6`#7/{%
    \ifnextchar<{\Vtrianglepairpppp(#1)|#2|/#3`#4`#5`#6`#7/}%
    {\Vtrianglepairpppp(#1)|#2|/#3`#4`#5`#6`#7/<\default,\default>}}%
\def\Ctrianglepairp(#1){\ifnextchar|{\Ctrianglepairpp(#1)}%
{\Ctrianglepairpp(#1)|lrmlr|}}%
\def\Ctrianglepairpp(#1)|#2|{\ifnextchar/{\Ctrianglepairppp(#1)|#2|}%
    {\Ctrianglepairppp(#1)|#2|/>`>`>`>`>/}}%
\def\Ctrianglepairppp(#1)|#2|/#3`#4`#5`#6`#7/{%
    \ifnextchar<{\Ctrianglepairpppp(#1)|#2|/#3`#4`#5`#6`#7/}%
    {\Ctrianglepairpppp(#1)|#2|/#3`#4`#5`#6`#7/<\default,\default>}}%
\def\Dtrianglepairp(#1){\ifnextchar|{\Dtrianglepairpp(#1)}%
{\Dtrianglepairpp(#1)|lrmlr|}}%
\def\Dtrianglepairpp(#1)|#2|{\ifnextchar/{\Dtrianglepairppp(#1)|#2|}%
    {\Dtrianglepairppp(#1)|#2|/>`>`>`>`>/}}%
\def\Dtrianglepairppp(#1)|#2|/#3`#4`#5`#6`#7/{%
    \ifnextchar<{\Dtrianglepairpppp(#1)|#2|/#3`#4`#5`#6`#7/}%
    {\Dtrianglepairpppp(#1)|#2|/#3`#4`#5`#6`#7/<\default,\default>}}%
\def\pplace[#1](#2,#3)[#4]{\POS(#2,#3)*+!!<0ex,\axis>!#1{#4}\ignorespaces}%
\def\cplace(#1,#2)[#3]{\POS(#1,#2)*+!!<0ex,\axis>{#3}\ignorespaces}%
\def\pullback#1]#2]{\square#1]\trident#2]\ignorespaces}%
\def\tridentppp|#1#2#3|/#4`#5`#6/<#7,#8>[#9]{%
\def\next[##1;##2`##3`##4]{\def\nodee{##1}\def\labele{##2}%
   \def\labelf{##3}\def\labelg{##4}}%
\next[#9]%
\advance \xpos by -\deltax%
\advance \xpos by -#7\advance \ypos by #8%
\advance\deltax by #7%
\morphism(\xpos,\ypos)|#1|/{#4}/<\deltax,-#8>[\nodee`\nodeb;\labele]%
\advance\deltax by -#7%
\morphism(\xpos,\ypos)|#2|/{#5}/<#7,-#8>[\nodee`\nodea;\labelf]%
\advance\deltay by #8%
\morphism(\xpos,\ypos)|#3|/{#6}/<#7,-\deltay>[\nodee`\nodec;\labelg]%
\ignorespaces}%
\def\trident{\ifnextchar|{\tridentp}{\tridentp|amb|}}%
\def\tridentp|#1|{\ifnextchar/{\tridentpp|#1|}{\tridentpp|#1|/{>}`{>}`{>}/}}%
\def\tridentpp|#1|/#2/{\ifnextchar<{\tridentppp|#1|/#2/}%
  {\tridentppp|#1|/#2/<500,500>}}%
\def\setmorphismwidth#1#2#3#4{%
 \setbox0=\hbox{$#1{\labelstyle#3#3}#2$}#4=\wd0%
 \divide #4 by 2 \divide #4 by \ul%
 \advance #4 by 350 \ratchet{#4}{500}}%
\def\setSquarewidth[#1`#2`#3`#4;#5`#6`#7`#8]{%
 \setmorphismwidth{#1}{#2}{#5}{\topw}%
 \setmorphismwidth{#3}{#4}{#8}{\botw}%
\ratchet{\topw}{\botw}}%
\def\Squarepppp(#1)|#2|/#3/<#4>[#5]{%
 \setSquarewidth[#5]%
 \squarepppp(#1)|#2|/#3/<\topw,#4>[#5]%
\ignorespaces}%
\def\Squarep(#1){\ifnextchar|{\Squarepp(#1)}{\Squarepp(#1)|alrb|}}%
\def\Squarepp(#1)|#2|{\ifnextchar/{\Squareppp(#1)|#2|}%
    {\Squareppp(#1)|#2|/>`>`>`>/}}%
\def\Squareppp(#1)|#2|/#3`#4`#5`#6/{%
    \ifnextchar<{\Squarepppp(#1)|#2|/#3`#4`#5`#6/}%
    {\Squarepppp(#1)|#2|/#3`#4`#5`#6/<\default>}}%
\def\hsquarespppp(#1,#2)|#3|/#4/<#5>[#6;#7]{%
\Xpos=#1\Ypos=#2%
\def\next|##1##2##3##4##5##6##7|{%
 \def\Xa{##1}\def\Xb{##2}\def\Xc{##3}\def\Xd{##4}%
 \def\Xe{##5}\def\Xf{##6}\def\Xg{##7}}%
\next|#3|%
\def\next<##1,##2,##3>{\deltaX=##1 \deltaXprime=##2 \deltaY=##3}%
\next<#5>%
\def\next[##1`##2`##3`##4`##5`##6]{%
 \def\Nodea{##1}\def\Nodeb{##2}\def\Nodec{##3}%
 \def\Noded{##4}\def\Nodee{##5}\def\Nodef{##6}}%
\next[#6]%
\def\next[##1`##2`##3`##4`##5`##6`##7]{%
 \def\Labela{##1}\def\Labelb{##2}\def\Labelc{##3}\def\Labeld{##4}%
 \def\Labele{##5}\def\Labelf{##6}\def\Labelg{##7}}%
\next[#7]%
\dohsquares/#4/}%
\def\dohsquares/#1`#2`#3`#4`#5`#6`#7/{%
\squarepppp(\Xpos,\Ypos)|\Xa\Xc\Xd\Xf|/#1`#3`#4`#6/<\deltaX,\deltaY>%
 [\Nodea`\Nodeb`\Noded`\Nodee;\Labela`\Labelc`\Labeld`\Labelf]%
 \advance \Xpos by \deltaX%
\squarepppp(\Xpos,\Ypos)|\Xb\Xd\Xe\Xg|/#2``#5`#7/<\deltaXprime,\deltaY>%
[\Nodeb`\Nodec`\Nodee`\Nodef;\Labelb``\Labele`\Labelg]%
\ignorespaces}%
\def\hsquaresp(#1){\ifnextchar|{\hsquarespp(#1)}{\hsquarespp%
(#1)|aalmrbb|}}%
\def\hsquarespp(#1)|#2|{\ifnextchar/{\hsquaresppp(#1)|#2|}%
    {\hsquaresppp(#1)|#2|/>`>`>`>`>`>`>/}}%
\def\hsquaresppp(#1)|#2|/#3/{%
    \ifnextchar<{\hsquarespppp(#1)|#2|/#3/}%
    {\hsquarespppp(#1)|#2|/#3/<\default,\default,\default>}}%
\def\hSquarespppp(#1,#2)|#3|/#4/<#5>[#6;#7]{%
\Xpos=#1\Ypos=#2%
\def\next|##1##2##3##4##5##6##7|{%
 \def\Xa{##1}\def\Xb{##2}\def\Xc{##3}\def\Xd{##4}%
 \def\Xe{##5}\def\Xf{##6}\def\Xg{##7}}%
\next|#3|%
\deltaY=#5%
\def\next[##1`##2`##3`##4`##5`##6]{%
 \def\Nodea{##1}\def\Nodeb{##2}\def\Nodec{##3}%
 \def\Noded{##4}\def\Nodee{##5}\def\Nodef{##6}}%
\next[#6]%
\def\next[##1`##2`##3`##4`##5`##6`##7]{%
 \def\Labela{##1}\def\Labelb{##2}\def\Labelc{##3}\def\Labeld{##4}%
 \def\Labele{##5}\def\Labelf{##6}\def\Labelg{##7}}%
\next[#7]%
\dohSquares/#4/}%
\def\dohSquares/#1`#2`#3`#4`#5`#6`#7/{%
\Squarepppp(\Xpos,\Ypos)|\Xa\Xc\Xd\Xf|/#1`#3`#4`#6/<\deltaY>%
 [\Nodea`\Nodeb`\Noded`\Nodee;\Labela`\Labelc`\Labeld`\Labelf]%
 \advance \Xpos by \topw%
\Squarepppp(\Xpos,\Ypos)|\Xb\Xd\Xe\Xg|/#2``#5`#7/<\deltaY>%
[\Nodeb`\Nodec`\Nodee`\Nodef;\Labelb``\Labele`\Labelg]%
\ignorespaces}%
\def\hSquaresp(#1){\ifnextchar|{\hSquarespp(#1)}{\hSquarespp%
(#1)|aalmrbb|}}%
\def\hSquarespp(#1)|#2|{\ifnextchar/{\hSquaresppp(#1)|#2|}%
    {\hSquaresppp(#1)|#2|/>`>`>`>`>`>`>/}}%
\def\hSquaresppp(#1)|#2|/#3/{%
    \ifnextchar<{\hSquarespppp(#1)|#2|/#3/}%
    {\hSquarespppp(#1)|#2|/#3/<\default>}}%
\def\vSquarespppp(#1,#2)|#3|/#4/<#5,#6>[#7;#8]{%
\Xpos=#1\Ypos=#2%
\def\next|##1##2##3##4##5##6##7|{%
 \def\Xa{##1}\def\Xb{##2}\def\Xc{##3}\def\Xd{##4}%
 \def\Xe{##5}\def\Xf{##6}\def\Xg{##7}}%
\next|#3|%
\deltaX=#5%
\deltaY=#6%
\def\next[##1`##2`##3`##4`##5`##6]{%
 \def\Nodea{##1}\def\Nodeb{##2}\def\Nodec{##3}%
 \def\Noded{##4}\def\Nodee{##5}\def\Nodef{##6}}%
\next[#7]%
\def\next[##1`##2`##3`##4`##5`##6`##7]{%
 \def\Labela{##1}\def\Labelb{##2}\def\Labelc{##3}\def\Labeld{##4}%
 \def\Labele{##5}\def\Labelf{##6}\def\Labelg{##7}}%
\next[#8]%
\dovSquares/#4/\ignorespaces}%
\def\dovSquares/#1`#2`#3`#4`#5`#6`#7/{%
\setmorphismwidth{\Nodea}{\Nodeb}{\Labela}{\topw}%
\setmorphismwidth{\Nodec}{\Noded}{\Labeld}{\botw}%
\ratchet{\topw}{\botw}%
\setmorphismwidth{\Nodee}{\Nodef}{\Labelg}{\botw}%
\ratchet{\topw}{\botw}%
\square(\Xpos,\Ypos)|\Xd\Xe\Xf\Xg|/`#5`#6`#7/<\topw,\deltaX>%
 [\Nodec`\Noded`\Nodee`\Nodef;`\Labele`\Labelf`\Labelg]%
\advance \Ypos by \deltaX%
\square(\Xpos,\Ypos)|\Xa\Xb\Xc\Xd|/#1`#2`#3`#4/<\topw,\deltaY>%
 [\Nodea`\Nodeb`\Nodec`\Noded;\Labela`\Labelb`\Labelc`\Labeld]%
}%
\def\vSquaresp(#1){\ifnextchar|{\vSquarespp(#1)}{\vSquarespp%
(#1)|alrmlrb|}}%
\def\vSquarespp(#1)|#2|{\ifnextchar/{\vSquaresppp(#1)|#2|}%
    {\vSquaresppp(#1)|#2|/>`>`>`>`>`>`>/}}%
\def\vSquaresppp(#1)|#2|/#3/{%
    \ifnextchar<{\vSquarespppp(#1)|#2|/#3/}%
    {\vSquarespppp(#1)|#2|/#3/<\default,\default>}}%
\def\osquarepppp(#1)|#2|/#3`#4`#5`#6/<#7>[#8]{\squarepppp%
 (#1)|#2|/#3`#4`#5`#6/<#7>[#8]%
 \let\Nodea\nodea\let\Nodeb\nodeb%
\let\Nodec\nodec\let\Noded\noded\Xpos=\xpos\Ypos=\ypos%
\deltaX=\deltax \deltaY=\deltay \isquare}%
\def\osquarep(#1){\ifnextchar|{\osquarepp(#1)}{\osquarepp(#1)|alrb|}}%
\def\osquarepp(#1)|#2|{\ifnextchar/{\osquareppp(#1)|#2|}%
    {\osquareppp(#1)|#2|/>`>`>`>/}}%
\def\osquareppp(#1)|#2|/#3`#4`#5`#6/{%
    \ifnextchar<{\osquarepppp(#1)|#2|/#3`#4`#5`#6/}%
    {\osquarepppp(#1)|#2|/#3`#4`#5`#6/<1500,1500>}}%
\def\isquarepppp(#1)|#2|/#3`#4`#5`#6/<#7>[#8]{%
 \squarepppp(#1)|#2|/#3`#4`#5`#6/<#7>[#8]%
\ifnextchar|{\cubep}{\cubep|mmmm|}}%
\def\cubep|#1|{\ifnextchar/{\cubepp|#1|}{\cubepp|#1|/>`>`>`>/}}%
\def\isquare{\ifnextchar({\isquarep}{\isquarep(\default,\default)}}%
\def\isquarep(#1){\ifnextchar|{\isquarepp(#1)}{\isquarepp(#1)|alrb|}}%
\def\isquarepp(#1)|#2|{\ifnextchar/{\isquareppp(#1)|#2|}%
    {\isquareppp(#1)|#2|/>`>`>`>/}}%
\def\isquareppp(#1)|#2|/#3`#4`#5`#6/{%
    \ifnextchar<{\isquarepppp(#1)|#2|/#3`#4`#5`#6/}%
    {\isquarepppp(#1)|#2|/#3`#4`#5`#6/<500,500>}}%
\def\cubepp|#1#2#3#4|/#5`#6`#7`#8/[#9]{%
\def\next[##1`##2`##3`##4]{\gdef\Labela{##1}%
\gdef\Labelb{##2}\gdef\Labelc{##3}\gdef\Labeld{##4}}\next[#9]%
\xend\xpos \yend\ypos%
\Xend\xend\advance\Xend by -\Xpos%
\Yend\yend\advance\Yend by -\Ypos%
\domorphism(\Xpos,\Ypos)|#2|/#6/<\Xend,\Yend>[\Nodeb`\nodeb;\Labelb]%
\advance\Xpos by-\deltaX%
\advance\xend by-\deltax%
\Xend\xend\advance\Xend by -\Xpos%
\domorphism(\Xpos,\Ypos)|#1|/#5/<\Xend,\Yend>[\Nodea`\nodea;\Labela]%
\advance\Ypos by-\deltaY%
\advance\yend by-\deltay%
\Yend\yend\advance\Yend by -\Ypos%
\domorphism(\Xpos,\Ypos)|#3|/#7/<\Xend,\Yend>[\Nodec`\nodec;\Labelc]%
\advance\Xpos by\deltaX%
\advance\xend by\deltax%
\Xend\xend\advance\Xend by -\Xpos%
\domorphism(\Xpos,\Ypos)|#4|/#8/<\Xend,\Yend>[\Noded`\noded;\Labeld]%
\ignorespaces}%
\def\setwdth#1#2{\setbox0\hbox{$\labelstyle#1$}\wdth=\wd0%
\setbox0\hbox{$\labelstyle#2$}\ifnum\wdth<\wd0 \wdth=\wd0 \fi}%
\def\topppp/#1/<#2>^#3_#4{\:%
\ifnum#2=0%
   \setwdth{#3}{#4}\deltax=\wdth \divide \deltax by \ul%
   \advance \deltax by \defaultmargin  \ratchet{\deltax}{200}%
\else \deltax #2%
\fi%
\xy\ar@{#1}^{#3}_{#4}(\deltax,0) \endxy%
\:}%
\def\toppp/#1/<#2>^#3{\ifnextchar_{\topppp/#1/<#2>^{#3}}{\topppp/#1/<#2>^{#3}_{}}}%
\def\topp/#1/<#2>{\ifnextchar^{\toppp/#1/<#2>}{\toppp/#1/<#2>^{}}}%
\def\toop/#1/{\ifnextchar<{\topp/#1/}{\topp/#1/<0>}}%
\def\to{\ifnextchar/{\toop}{\toop/>/}}%
\def\twopppp/#1`#2/<#3>^#4_#5{\:%
\ifnum0=#3%
  \setwdth{#4}{#5}\deltax=\wdth \divide \deltax by \ul \advance \deltax%
  by \defaultmargin \ratchet{\deltax}{200}%
\else \deltax#3 \fi%
\xy\ar@{#1}@<2.5pt>^{#4}(\deltax,0)%
\ar@{#2}@<-2.5pt>_{#5}(\deltax,0)\endxy\:}%
\def\twoppp/#1`#2/<#3>^#4{\ifnextchar_{\twopppp/#1`#2/<#3>^{#4}}%
  {\twopppp/#1`#2/<#3>^{#4}_{}}}%
\def\twopp/#1`#2/<#3>{\ifnextchar^{\twoppp/#1`#2/<#3>}{\twoppp/#1`#2/<#3>^{}}}%
\def\twop/#1`#2/{\ifnextchar<{\twopp/#1`#2/}{\twopp/#1`#2/<0>}}%
\def\threeppppp/#1`#2`#3/<#4>^#5|#6_#7{\:%
\ifnum0=#4%
\setbox0\hbox{$\labelstyle#5$}\wdth=\wd0%
\setbox0\hbox{$\labelstyle#6$}\ifnum\wdth<\wd0 \wdth=\wd0 \fi%
\setbox0\hbox{$\labelstyle#7$}\ifnum\wdth<\wd0 \wdth=\wd0 \fi%
\deltax=\wdth \divide \deltax by \ul \advance \deltax by%
\defaultmargin \ratchet{\deltax}{300}%
\else\deltax#4 \fi%
    \xy \ifnum\wd0=0 \ar@{#2}(\deltax,0)%
    \else \ar@{#2}|{#6}(\deltax,0)\fi%
\ar@{#1}@<4.5pt>^{#5}(\deltax,0)%
\ar@{#3}@<-4.5pt>_{#7}(\deltax,0)\endxy\:}%
\def\threepppp/#1`#2`#3/<#4>^#5|#6{\ifnextchar_{\threeppppp%
  /#1`#2`#3/<#4>^{#5}|{#6}}{\threeppppp/#1`#2`#3/<#4>^{#5}|{#6}_{}}}%
\def\threeppp/#1`#2`#3/<#4>^#5{\ifnextchar|{\threepppp%
  /#1`#2`#3/<#4>^{#5}}{\threepppp/#1`#2`#3/<#4>^{#5}|{}}}%
\def\threepp/#1`#2`#3/<#4>{\ifnextchar^{\threeppp/#1`#2`#3/<#4>}%
  {\threeppp/#1`#2`#3/<#4>^{}}}%
\def\threep/#1`#2`#3/{\ifnextchar<{\threepp/#1`#2`#3/}%
  {\threepp/#1`#2`#3/<0>}}%
\def\twoar(#1,#2){{%
 \scalefactor{0.1}%
 \deltax#1\deltay#2%
 \deltaX=\ifnum\deltax<0-\fi\deltax%
 \deltaY=\ifnum\deltay<0-\fi\deltay%
 \Xend\deltax \multiply \Xend by \deltax%
 \Yend\deltay \multiply \Yend by \deltay%
 \advance\Xend by \Yend \multiply \Xend by 3%
 \ifnum \deltaX > \deltaY%
    \multiply \deltaX by 3 \advance \deltaX by \deltaY%
 \else%
    \multiply \deltaY by 3 \advance \deltaX by \deltaY%
 \fi%
 \multiply\deltax by 500%
 \multiply\deltay by 500%
 \xpos\deltax \multiply \xpos by 3 \divide\xpos by \deltaX%
 \Xpos\deltax \multiply \Xpos by \deltaX \divide \Xpos by \Xend%
 \advance \xpos by \Xpos%
 \ypos\deltay \multiply \ypos by 3 \divide\ypos by \deltaX%
 \Ypos\deltay \multiply \Ypos by \deltaX \divide \Ypos by \Xend%
 \advance \ypos by \Ypos%
 \xy \ar@{=>}(\xpos,\ypos) \endxy%
}\ignorespaces}%
\def\iiixiiipppppp(#1,#2)|#3|/#4/<#5>#6<#7>[#8;#9]{%
 \xpos#1\ypos#2\relax%
 \def\next|##1##2##3##4##5##6##7|{\def\xa{##1}\def\xb{##2}%
 \def\xc{##3}\def\xd{##4}\def\xe{##5}\def\xf{##6}\nextt|##7|}%
 \def\nextt|##1##2##3##4##5##6|{\def\xg{##1}\def\xh{##2}%
 \def\xi{##3}\def\xj{##4}\def\xk{##5}\def\xl{##6}}%
 \next|#3|%
 \def\next<##1,##2>{\deltax##1\deltay##2}%
 \next<#5>%
 \def\next<##1,##2>{\deltaX##1\deltaY##2}%
 \next<#7>%
 \def\next##1{\topw##1\relax%
 \ifodd\topw \def\zl{}\else\def\zl{\relax}\fi \divide\topw by 2%
 \ifodd\topw \def\zk{}\else\def\zk{\relax}\fi \divide\topw by 2%
 \ifodd\topw \def\zj{}\else\def\zj{\relax}\fi \divide\topw by 2%
 \ifodd\topw \def\zi{}\else\def\zi{\relax}\fi \divide\topw by 2%
 \ifodd\topw \def\zh{}\else\def\zh{\relax}\fi \divide\topw by 2%
 \ifodd\topw \def\zg{}\else\def\zg{\relax}\fi \divide\topw by 2%
 \ifodd\topw \def\zf{}\else\def\zf{\relax}\fi \divide\topw by 2%
 \ifodd\topw \def\ze{}\else\def\ze{\relax}\fi \divide\topw by 2%
 \ifodd\topw \def\zd{}\else\def\zd{\relax}\fi \divide\topw by 2%
 \ifodd\topw \def\zc{}\else\def\zc{\relax}\fi \divide\topw by 2%
 \ifodd\topw \def\zb{}\else\def\zb{\relax}\fi \divide\topw by 2%
 \ifodd\topw \def\za{}\else\def\za{\relax}\fi}%
 \next{#6}%
 \def\next[##1`##2`##3`##4`##5`##6`##7`##8`##9]{%
 \def\nodea{##1}\def\nodeb{##2}\def\nodec{##3}%
 \def\noded{##4}\def\nodee{##5}\def\nodef{##6}%
 \def\nodeg{##7}\def\nodeh{##8}\def\nodei{##9}}%
 \next[#8]%
 \def\next[##1`##2`##3`##4`##5`##6`##7]{%
 \def\labela{##1}\def\labelb{##2}\def\labelc{##3}%
 \def\labeld{##4}\def\labele{##5}\def\labelf{##6}\nextt[##7]}%
 \def\nextt[##1`##2`##3`##4`##5`##6]{%
 \def\labelg{##1}\def\labelh{##2}\def\labeli{##3}%
 \def\labelj{##4}\def\labelk{##5}\def\labell{##6}}%
 \next[#9]%
 \def\next/##1`##2`##3`##4`##5`##6`##7/{%
\morphism(\xpos,\ypos)|\xe|/{##5}/<\deltax,0>[\nodeg`\nodeh;\labele]%
 \ifx\zi\empty\relax \morphism(\xpos,\ypos)||/<-/<-\deltaX,0>[\nodeg`0;]\fi%
 \ifx\zd\empty\relax \morphism(\xpos,\ypos)||<0,-\deltaY>[\nodeg`0;]\fi%
 \advance\xpos by \deltax%
 \morphism(\xpos,\ypos)|\xf|/{##6}/<\deltax,0>[\nodeh`\nodei;\labelf]%
 \ifx\ze\empty\relax \morphism(\xpos,\ypos)||<0,-\deltaY>[\nodeh`0;]\fi%
 \advance\xpos by \deltax%
 \ifx\zf\empty\relax \morphism(\xpos,\ypos)||<0,-\deltaY>[\nodei`0;]\fi%
 \ifx\zl\empty\relax \morphism(\xpos,\ypos)||<\deltaX,0>[\nodei`0;]\fi%
 \advance\ypos by \deltay%
 \ifx\zk\empty\relax \morphism(\xpos,\ypos)||<\deltaX,0>[\nodef`0;]\fi%
 \advance\xpos by -\deltax%
 \morphism(\xpos,\ypos)|\xd|/{##4}/<\deltax,0>[\nodee`\nodef;\labeld]%
 \advance\xpos by -\deltax%
 \morphism(\xpos,\ypos)|\xc|/{##3}/<\deltax,0>[\noded`\nodee;\labelc]%
 \ifx\zh\empty\relax \morphism(\xpos,\ypos)||/<-/<-\deltaX,0>[\noded`0;]\fi%
 \advance\ypos by \deltay%
 \morphism(\xpos,\ypos)|\xa|/{##1}/<\deltax,0>[\nodea`\nodeb;\labela]%
 \ifx\zg\empty\relax \morphism(\xpos,\ypos)||/<-/<-\deltaX,0>[\nodea`0;]\fi%
 \ifx\za\empty\relax \morphism(\xpos,\ypos)||/<-/<0,\deltaY>[\nodea`0;]\fi%
 \advance\xpos by \deltax%
 \morphism(\xpos,\ypos)|\xb|/{##2}/<\deltax,0>[\nodeb`\nodec;\labelb]%
 \ifx\zb\empty\relax \morphism(\xpos,\ypos)||/<-/<0,\deltaY>[\nodeb`0;]\fi%
 \advance\xpos by \deltax%
 \ifx\zc\empty\relax \morphism(\xpos,\ypos)||/<-/<0,\deltaY>[\nodec`0;]\fi%
 \ifx\zj\empty\relax \morphism(\xpos,\ypos)||<\deltaX,0>[\nodec`0;]\fi%
 \nextt/##7/}%
 \def\nextt/##1`##2`##3`##4`##5`##6/{%
 \morphism(\xpos,\ypos)|\xi|/{##3}/<0,-\deltay>[\nodec`\nodef;\labeli]%
 \advance\xpos by -\deltax%
 \morphism(\xpos,\ypos)|\xh|/{##2}/<0,-\deltay>[\nodeb`\nodee;\labelh]%
 \advance\xpos by -\deltax%
 \morphism(\xpos,\ypos)|\xg|/{##1}/<0,-\deltay>[\nodea`\noded;\labelg]%
 \advance\ypos by -\deltay%
 \morphism(\xpos,\ypos)|\xj|/{##4}/<0,-\deltay>[\noded`\nodeg;\labelj]%
 \advance\xpos by \deltax%
 \morphism(\xpos,\ypos)|\xk|/{##5}/<0,-\deltay>[\nodee`\nodeh;\labelk]%
 \advance\xpos by \deltax%
 \morphism(\xpos,\ypos)|\xl|/{##6}/<0,-\deltay>[\nodef`\nodei;\labell]}%
 \next/#4/\ignorespaces}%
\def\iiixiiip(#1){\ifnextchar|{\iiixiiipp(#1)}%
  {\iiixiiipp(#1)|aammbblmrlmr|}}%
\def\iiixiiipp(#1)|#2|{\ifnextchar/{\iiixiiippp(#1)|#2|}%
    {\iiixiiippp(#1)|#2|/>`>`>`>`>`>`>`>`>`>`>`>/}}%
\def\iiixiiippp(#1)|#2|/#3/{%
    \ifnextchar<{\iiixiiipppp(#1)|#2|/#3/}%
    {\iiixiiipppp(#1)|#2|/#3/<\default,\default>}}%
\def\iiixiiipppp(#1)|#2|/#3/<#4>{\ifnextchar[{\iiixiiippppp(#1)|#2|/#3/%
   <#4>0<0,0>}{\iiixiiippppp(#1)|#2|/#3/<#4>}}%
\def\iiixiiippppp(#1)|#2|/#3/<#4>#5{\ifnextchar<%
   {\iiixiiipppppp(#1)|#2|/#3/<#4>{#5}}%
   {\iiixiiipppppp(#1)|#2|/#3/<#4>{#5}<400,400>}}%
\def\iiixiipppppp(#1,#2)|#3|/#4/<#5>#6<#7>[#8;#9]{%
 \xpos#1\ypos#2\relax%
 \def\next|##1##2##3##4##5##6##7|{\def\xa{##1}\def\xb{##2}%
 \def\xc{##3}\def\xd{##4}\def\xe{##5}\def\xf{##6}\def\xg{##7}}%
 \next|#3|%
 \def\next<##1,##2>{\deltax##1\deltay##2}%
 \next<#5>%
 \deltaX#7%
 \topw#6%
 \def\next{%
 \ifodd\topw \def\za{}\else\def\za{\relax}\fi \divide\topw by 2%
 \ifodd\topw \def\zb{}\else\def\zb{\relax}\fi \divide\topw by 2%
 \ifodd\topw \def\zc{}\else\def\zc{\relax}\fi \divide\topw by 2%
 \ifodd\topw \def\zd{}\else\def\zd{\relax}\fi}%
 \next%
 \def\next[##1`##2`##3`##4`##5`##6]{%
 \def\nodea{##1}\def\nodeb{##2}\def\nodec{##3}%
 \def\noded{##4}\def\nodee{##5}\def\nodef{##6}}%
 \next[#8]%
 \def\next[##1`##2`##3`##4`##5`##6`##7]{%
 \def\labela{##1}\def\labelb{##2}\def\labelc{##3}%
 \def\labeld{##4}\def\labele{##5}\def\labelf{##6}\def\labelg{##7}}%
 \next[#9]%
 \def\next/##1`##2`##3`##4`##5`##6`##7/{%
 \ifx\zc\empty\relax\morphism(\xpos,\ypos)<\deltaX,0>[0`\noded;]\fi%
 \advance\xpos by\deltaX%
 \morphism(\xpos,\ypos)|\xc|/##3/<\deltax,0>[\noded`\nodee;\labelc]%
 \advance\xpos by \deltax%
 \morphism(\xpos,\ypos)|\xd|/##4/<\deltax,0>[\nodee`\nodef;\labeld]%
 \advance\xpos by \deltax%
 \ifx\zd\empty\relax  \morphism(\xpos,\ypos)<\deltaX,0>[\nodef`0;]\fi%
 \advance\xpos by -\deltaX  \advance\xpos by -\deltax%
 \advance\xpos by -\deltax  \advance\ypos by \deltay%
 \ifx\za\empty\relax\morphism(\xpos,\ypos)<\deltaX,0>[0`\nodea;]\fi%
 \advance\xpos by\deltaX%
 \morphism(\xpos,\ypos)|\xa|/##1/<\deltax,0>[\nodea`\nodeb;\labela]%
 \morphism(\xpos,\ypos)|\xe|/##5/<0,-\deltay>[\nodea`\noded;\labele]%
 \advance\xpos by \deltax%
 \morphism(\xpos,\ypos)|\xb|/##2/<\deltax,0>[\nodeb`\nodec;\labelb]%
 \morphism(\xpos,\ypos)|\xf|/##6/<0,-\deltay>[\nodeb`\nodee;\labelf]%
 \advance\xpos by \deltax%
 \morphism(\xpos,\ypos)|\xg|/##7/<0,-\deltay>[\nodec`\nodef;\labelg]%
 \ifx\zb\empty\relax \morphism(\xpos,\ypos)<\deltaX,0>[\nodec`0;]\fi}%
 \next/#4/\ignorespaces}%
\def\iiixiip(#1){\ifnextchar|{\iiixiipp(#1)}%
  {\iiixiipp(#1)|aabblmr|}}%
\def\iiixiipp(#1)|#2|{\ifnextchar/{\iiixiippp(#1)|#2|}%
    {\iiixiippp(#1)|#2|/>`>`>`>`>`>`>/}}%
\def\iiixiippp(#1)|#2|/#3/{%
    \ifnextchar<{\iiixiipppp(#1)|#2|/#3/}%
    {\iiixiipppp(#1)|#2|/#3/<\default,\default>}}%
\def\iiixiipppp(#1)|#2|/#3/<#4>{\ifnextchar[{\iiixiippppp(#1)|#2|/#3/%
   <#4>{0}<0>}{\iiixiippppp(#1)|#2|/#3/<#4>}}%
\def\iiixiippppp(#1)|#2|/#3/<#4>#5{\ifnextchar<%
   {\iiixiipppppp(#1)|#2|/#3/<#4>{#5}}%
   {\iiixiipppppp(#1)|#2|/#3/<#4>{#5}<0>}}%
\def\node#1(#2,#3)[#4]{%
\expandafter\gdef\csname x@#1\endcsname{#2}%
\expandafter\gdef\csname y@#1\endcsname{#3}%
\expandafter\gdef\csname ob@#1\endcsname{#4}%
\ignorespaces}%
\def\arrowp|#1|{\ifnextchar/{\arrowpp|#1|}{\arrowpp|#1|/>/}}%
\def\arrowpp|#1|/#2/[#3`#4;#5]{%
\xfinish=\csname x@#4\endcsname%
\yfinish=\csname y@#4\endcsname%
\advance\xfinish by -\csname x@#3\endcsname%
\advance\yfinish by -\csname y@#3\endcsname%
\morphism(\csname x@#3\endcsname,\csname y@#3\endcsname)|#1|/#2/%
<\xfinish,\yfinish>[\csname ob@#3\endcsname`\csname ob@#4\endcsname;#5]%
}%
\def\Loop(#1,#2)#3(#4,#5){\POS(#1,#2)*+!!<0ex,\axis>{#3}\ar@(#4,#5)}%
\def\iloop#1(#2,#3){\xy\Loop(0,0)#1(#2,#3)\endxy}%
     \let \PATHafterPOS\PATHafterPOS@default%
     \let \arsavedPATHafterPOS@@\relax%
     \let\afterar@@\relax%
\xydef@\endxyobj{\if\inxy@\else\xyerror@{Unexpected \string\endxy}{}\fi%
>  \relax%
>   \dimen@=\Y@max \advance\dimen@-\Y@min%
>   \ifdim\dimen@<\z@ \dimen@=\z@ \Y@min=\z@ \Y@max=\z@ \fi%
>   \dimen@=\X@max \advance\dimen@-\X@min%
>   \ifdim\dimen@<\z@ \dimen@=\z@ \X@min=\z@ \X@max=\z@ \fi%
>   \edef\tmp@{\egroup%
>     \setboxz@h{\kern-\the\X@min \boxz@}%
>     \ht\z@=\the\Y@max \dp\z@=-\the\Y@min \wdz@=\the\dimen@%
>     \noexpand\maybeunraise@ \raise\dimen@\boxz@%
>     \noexpand\recoverXyStyle@ \egroup \noexpand\xy@end%
>     \U@c=\the\Y@max \advance\U@c-\the\Y@c%
>     \D@c=-\the\Y@min \advance\D@c\the\Y@c%
>     \L@c=-\the\X@min  \advance\L@c\the\X@c%
>     \R@c=\the\X@max  \advance\R@c-\the\X@c%
>    }\tmp@}%
\gdef\xymerge@MinMax{}%
\xydef@\twocell{\hbox\bgroup\xysave@MinMax\@twocell}%
\xydef@\uppertwocell{\hbox\bgroup\xysave@MinMax\@uppertwocell}%
\xydef@\lowertwocell{\hbox\bgroup\xysave@MinMax\@lowertwocell}%
\xydef@\compositemap{\hbox\bgroup\xysave@MinMax\@compositemap}%
\xydef@\xysave@MinMax{\xdef\xymerge@MinMax{%
   \noexpand\ifdim\X@max<\the\X@max \X@max=\the\X@max\noexpand\fi%
   \noexpand\ifdim\X@min>\the\X@min \X@min=\the\X@min\noexpand\fi%
   \noexpand\ifdim\Y@max<\the\Y@max \Y@max=\the\Y@max\noexpand\fi%
   \noexpand\ifdim\Y@min>\the\Y@min \Y@min=\the\Y@min\noexpand\fi%
  }}%
\xydef@\drop@Twocell{\boxz@ \xymerge@MinMax}%
\xydef@\twocell@DONE{%
  \edef\tmp@{\egroup%
   \X@min=\the\X@min \X@max=\the\X@max%
   \Y@min=\the\Y@min \Y@max=\the\Y@max}\tmp@%
  \L@c=\X@c \advance\L@c-\X@min \R@c=\X@max \advance\R@c-\X@c%
  \D@c=\Y@c \advance\D@c-\Y@min \U@c=\Y@max \advance\U@c-\Y@c%
  \ht\z@=\U@c \dp\z@=\D@c \dimen@=\L@c \advance\dimen@\R@c \wdz@=\dimen@%
  \computeLeftUpness@%
  \setboxz@h{\kern-\X@p \raise-\Y@c\boxz@ }%
  \dimen@=\L@c \advance\dimen@\R@c \wdz@=\dimen@ \ht\z@=\U@c \dp\z@=\D@c%
  \Edge@c={\rectangleEdge}\Invisible@false \Hidden@false%
  \edef\Drop@@{\noexpand\drop@Twocell%
   \noexpand\def\noexpand\Leftness@{\Leftness@}%
   \noexpand\def\noexpand\Upness@{\Upness@}}%
  \edef\Connect@@{\noexpand\connect@Twocell%
   \noexpand\ifdim\X@max<\the\X@max \X@max=\the\X@max\noexpand\fi%
   \noexpand\ifdim\X@min>\the\X@min \X@min=\the\X@min\noexpand\fi%
   \noexpand\ifdim\Y@max<\the\Y@max \Y@max=\the\Y@max\noexpand\fi%
   \noexpand\ifdim\Y@min>\the\Y@min \Y@min=\the\Y@min\noexpand\fi }%
  \xymerge@MinMax%
}%
\title{Cohomology operations and algebraic geometry}
\author{Simone Borghesi}
\address{Universit\'a degli Studi di Milano-Bicocca\\
Dipartimento di Matematica e Applicazioni\\\newline
via Cozzi 53\\
20125 Milano\\
Italy}
\email{mandu2@libero.it}
\urladdr{}
\def\cnewtheorem#1[#2]#3{\newtheorem{#1}{#3}[section]
\expandafter\let\csname c@#1\endcsname\c@theorem}
\let\xysavmatrix\xymatrix
\def\xymatrix{\disablesubscriptcorrection\xysavmatrix}
\renewcommand{\to}{\rightarrow}
\let\barrsquare\square
\let\square\undefined
\newtheorem{theorem}{Theorem}[section]
\newtheorem*{conjecture}{Conjecture}
\theoremstyle{remark}
\newcommand{\p}{{\mathbb Z}/p}
\newcommand{\au}{{\mskip0mu\underline{\mskip-0mu a \mskip-2mu}\mskip2mu}}
\newcommand{\am}{{\mathcal{A}}^{*,*}}
\newcommand{\hb}{H^{*,*}}
\newcommand{\act}{\stackrel{\mathcal{A}}{\cdot}}
\newcommand{\hkp}{\mathcal{H}_\bullet(k)}
\newcommand{\xa}{\mathcal{X}_{\au}}
\newcommand{\xar}{\tilde{\mathcal{X}}_{\au}}
\newcommand{\pro}{{\mathbb P}^1_k}
\newcommand{\due}{\mathbb{Z}/2}
\newcommand{\dual}{\mathcal{A}_{*,*}}
\newcommand{\affs}{{\mathbb A}^1}
\newcommand{\aff}{{\mathbb A}^1_k}
\newcommand{\affx}{{\mathbb A}^1_X}
\newcommand{\nshsets}{\Shv(\Sm/k)_{\Nis}}
\newcommand{\chara}{\operatorname{char}}
\begin{document}

\begin{abstract}
The manuscript is an overview of the motivations and foundations lying
behind Voevodsky's ideas of constructing categories similar to the
ordinary topological homotopy categories. The objects of these categories
are strictly related to algebraic varieties and preserve some of their
algebraic invariants.
\end{abstract}

\maketitle


\section{Introduction}

This manuscript is based on a ten hours series of seminars I delivered 
in August of 2003 at
the Nagoya Institute of Technology as part of the workshop on homotopy
theory organized by Norihiko Minami and following the Kinosaki 
conference in honor of Goro Nishida. 
One of the most striking applications of homotopy
theory in ``exotic'' contexes is Voevodsky's proof of the Milnor 
Conjecture. This conjecture can be reduced to statements about
algebraic varieties and ``cohomology theories'' of algebraic
varieties. These contravariant functors are called {\it motivic
  cohomology} with coefficients in abelian groups $A$. Since they
share several properties with singular cohomology in
classical homotopy theory, it is reasonable to expect ``motivic
cohomology operations'' acting naturally on these cohomology theories. 
By assuming the
existence of certain motivic Steenrod operations and guessing their right
degrees, Voevodsky was able to prove the Milnor 
Conjecture. This strategy reduced the complete proof of the conjecture
to the construction of these operations and to an appropriate category
in whcih motivic cohomology is a ``representable''. 
In homotopy theory there are several ways of doing this. We now know
two ways of obtaining such operations on motivic cohomology: one is
due to P Brosnan \cite{brosnan} and the other to V Voevodsky 
\cite{voe-oper}. The latter approach follows a systematic developement of
homotopy categories containing algebraic information of the underlying
objects and it is the one we will discuss in this manuscript. 
It turns out that, if we think of an algebraic variety as 
something like a topological space with an algebraic structure
attached to it, it makes sense to try to construct homotopy categories
in which objects are algebraic varieties, as opposed to just topological
spaces, and, at the same time motivic cohomology representable. In
the classical homotopy categories such representability always holds
for any cohomology theory because of Brown Representability Theorem. 
If such exotic homotopy categories existed in our algebraic setting, 
they would tautologically contain all the algebraic invariants
detected by the cohomology theories represented. 

This manuscript begins with a motivational part constituted by an 
introduction to Voevodsky's reduction steps of the Milnor Conjecture. 
By the end, the conjecture is reduced to the
existence of motivic Steenrod operations and of an (unstable) homotopy
category of schemes. In the second section
we will discuss some of the main issues  involved on the construction
of the homotopy category of schemes and in the proof of the
representability of motivic cohomology, assuming perfectness of the 
base field. Another reference for
representability of motivic cohomology are the lecture notes by Voevodsky
and Deligne \cite{voe-del}. We will 
focus particularly on the identifications between 
objects in the localized categories. 

For the beautiful memories of the period spent in Japan,
which included the workshop at the Nagoya Institute of Technology, I
am greatly indebted to Akito Futaki and to Norihiko Minami.

\section{Motivic Steenrod operations in the Milnor Conjecture}

Throughout this section, the word scheme will refer to a separable scheme of
finite type over a field $k$. Alternatively, we may consider a scheme
to be an algebraic variety over a field $k$.
By Milnor Conjecture we mean the statement known as  
Bloch--Kato Conjecture at the prime $p=2$ (for more information
about the origin of this conjecture, see the introduction of the paper
\cite{on2tors} by Voevodsky). This conjecture asserts:
\begin{conjecture}[Bloch--Kato]
Let $k$ be a field of characteristic different from a prime number
$p$. Then the norm residue homomorphism
\begin{equation}
N\co K_n^M(k)/(p)\to \mathbb{H}_{\et}^n(\Spec k,\mu_p^{\otimes n}) 
\end{equation}
is an isomorphism for any nonnegative integer $n$.
\end{conjecture}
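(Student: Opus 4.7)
The plan is to proceed by induction on $n$, drawing on the bigraded motivic cohomology theory, the motivic Steenrod algebra, and the unstable motivic homotopy category whose construction occupies the remainder of the manuscript. The base cases are classical: for $n=0$ both sides are $\p$; for $n=1$ Kummer theory provides the isomorphism $k^{\times}/(k^{\times})^{p}\cong\mathbb{H}^{1}_{\et}(\Spec k,\mu_{p})$; and for $n=2$ the statement is the theorem of Merkurjev--Suslin.

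For the inductive step I would assume that $N$ is an isomorphism in all degrees below $n$ and fix a nontrivial symbol $\alpha=\{a_{1},\dots,a_{n}\}\in K_{n}^{M}(k)/(p)$. The first move is to attach to $\alpha$ a Rost norm variety $X_{\alpha}$: a smooth projective $k$--variety over which $\alpha$ becomes trivial modulo $p$ and whose motivic cohomology is tightly controlled by its \v{C}ech simplicial scheme $\mathcal{X}_{\alpha}$. A norm/transfer argument then reduces both the injectivity and the surjectivity of $N$ to motivic statements about $\mathcal{X}_{\alpha}$, once \'etale cohomology is identified with motivic cohomology in the appropriate range via the Beilinson--Lichtenbaum comparison $H^{p,q}(X,\p)\cong\mathbb{H}^{p}_{\et}(X,\mu_{p}^{\otimes q})$ valid for $p\leq q$.

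The heart of the argument is to exploit the action of the motivic Steenrod operations on $H^{*,*}(\mathcal{X}_{\alpha},\p)$. One constructs a distinguished Margolis-type class on $X_{\alpha}$ and, using a carefully chosen reduced power operation of the correct bidegree, shows that this class cannot vanish; pulling this information back to $\Spec k$ produces the required nontrivial lift of $\alpha$ and closes the induction.

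The main obstacle is twofold. First, one must construct norm varieties $X_{\alpha}$ with the correct motivic behaviour---Pfister quadrics suffice at $p=2$, but the general prime demands Rost's far more delicate construction---and verify that their motivic cohomology is concentrated in a predictable range. Second, one must establish the existence and structural properties of the motivic Steenrod operations (stability, Cartan formula, and the expected bidegrees) in the homotopy category $\hkp$; this in turn rests on the representability of $\hb$ developed later in the paper.
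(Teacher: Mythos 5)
The statement you are proving is stated in the paper as a \emph{conjecture}, and the manuscript does not prove it: it only surveys Voevodsky's reduction steps (Bloch--Kato follows from the $p$--local Beilinson--Lichtenbaum vanishing $\mathbb{H}^{w+1}_{\et}(\Spec k,\mathbb{Z}_{(p)}(w))=0$, which is then reduced to the existence of splitting fields $k_{\au}$ and ultimately, for $p=2$, to $H^{w+1,w}(\xa,\mathbb{Z}_{(2)})=0$ for the \v{C}ech object of a Pfister neighborhood). Your proposal follows the same broad route, so in that sense it matches the paper; but like the paper it is a programme, not a proof, and every genuinely hard step is deferred to your closing ``obstacles'' paragraph. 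That is acceptable for a survey but not for a proof of the statement.

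Beyond that, two points of your outline do not match how the argument actually has to run. First, the induction cannot be set up directly on $n$ in Bloch--Kato: the comparison isomorphism $H^{i,j}(X,\p)\cong\mathbb{H}^i_{\et}(X,\mu_p^{\otimes j})$ for $i\le j\le n$ is itself only available \emph{assuming} Beilinson--Lichtenbaum in weights $\le n$, so the induction must be carried on the weight $w$ in the Beilinson--Lichtenbaum statement, with Bloch--Kato deduced afterwards; as stated, your inductive step uses a comparison theorem whose hypothesis is exactly what you are trying to prove in degree $n$. Second, the ``heart'' is described backwards. One does not exhibit a class that ``cannot vanish'' and pull back a lift of $\alpha$; one proves a \emph{vanishing} statement, $H^{w+1,w}(\xa,\mathbb{Z}_{(2)})=0$, by showing that any nonzero class in the image of the integral reduction map would be carried by the composite $Q_{w-2}\cdots Q_1$ of Milnor operations into a group that Rost's results on subquadrics show is zero, the injectivity of each $Q_i$ being controlled by the vanishing of the Margolis homology of $\tilde H^{*,*}(\xar,\due)$ --- and that vanishing is where the characteristic-number condition $\deg s_{p^t-1}(X)\not\equiv 0 \bmod p^2$ and the Thom--Pontryagin construction enter. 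You should also flag that the needed surjection $H^{w+1,w}(\xa,\mathbb{Z}_{(2)})\to\ker i^*$ and the existence of the operations $Q_i$ with the expected properties (note that $Q_t$ fails to be primitive at $p=2$ when $\sqrt{-1}\notin k$) are independent, substantial inputs, not consequences of representability alone.
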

We assume the reader to have mainly a homotopy theoretic background,
therefore we will occasionally include some descriptions of objects
used in algebraic geometry. The {\it Milnor $K$--theory} 
is defined as the graded ring
$$\{K_0^M(k),K_1^M(k), K_2^K(k),\ldots\}$$ where $K_n^M(k)$ is the
quotient of the group $k^*\otimes_{\mathbb{Z}}\stackrel{n}{\cdots}\otimes_{
\mathbb{Z}}k^*$ by the subgroup generated by the elements
$a_1\otimes\cdots\otimes a_n$ where $a_i+a_{i+1}=1$ for some $i$. It
is useful to mention that, in the literature, when dealing with Milnor
$K$--theory, the multiplicative group $k^*$ of the field $k$ is written
additively: for instance the element in $K_n^M(k)$ 
represented by $a_1\otimes\cdots\otimes a_m^c\otimes\cdots a_n$ belongs to
the subgroup $c\cdot K_n^M(k)$, being equal to $a_1\otimes\cdots
c\cdot a_m\otimes\cdots a_n$ where $c\cdot a_m$ is in $K_1^M(k)=(k^*,+)$. 
The group lying as target of the norm residue homomorphism is a
(hyper)cohomology group of the algebraic variety $\Spec k$. As first
approximation we may think of it as being a sort of ordinary
cohomology functor $H^*(-, A)$ on algebraic varieties 
in which, instead of having the abelian group $A$ as the only input, we
have two inputs: the {\it Grothendieck topology} ({\it et}=\'etale in this
case), and a {\it complex of sheaves of abelian groups} for the 
topology considered as
``coefficients'' of the cohomology (the \'etale complex of sheaves 
$\mu_p^{\otimes n}$ in the statement of the conjecture). The complex
of sheaves $\mu_p$ is zero at any degree except in degree zero where
it is the sheaf that associates the elements $f\in
\mathcal{O}(X)$ such that $f^p=1$ to any smooth scheme of
finite type $X$ over a field $k$. $\mu_p^{\otimes n}$ is the $n$--fold
tensor product of $\mu_p$ in the derived category (one can construct
this monoidal structure in a similiar way as in the derived
category of complexes of abelian groups). As we would expect, the
cohomology theory $\mathbb{H}_{\et}^*(-, \mu_p^{\otimes *})$
is endowed of a commutative ring structure given by the {\it cup product}.  
The norm residue homomorphism is defined as
$$N(\overline{a_1\otimes\cdots\otimes a_n})=\delta(a_1)\cup\cdots\cup
\delta(a_n)\in \mathbb{H}_{\et}^n(\Spec k, \mu_p^{\otimes n}),$$
where $\delta$ is the coboundary operator 
$\mathbb{H}_{\et}^0(\Spec k,\mathbb{G}_m)\to
\mathbb{H}_{\et}^1(\Spec k,\mu_p)$ associated to the short exact
(for the \'etale topology) 
sequence of sheaves
\begin{equation}
\xymatrix{0\ar[r]& \mu_p\ar[r]&\mathbb{G}_m\ar[r]^{\hat{}p}&
  \mathbb{G}_m\ar[r]& 0}
\end{equation} 
We recall that {\it the multiplicative group} sheaf $\mathbb{G}_m$ is
defined as $\mathbb{G}_m(X)=\mathcal{O}(X)^*$ for any smooth scheme
$X$ and that
$H^0(X,\mathcal{F})=\mathcal{F}(X)$ for any sheaf $\mathcal{F}$.
$N$ defines indeed an homomorphism from $K_*^M(k)$
because $\delta(a)\cup \delta(1-a)=0$ in
$\mathbb{H}_{\et}^2(\Spec k,\mu^{\otimes 2})$ as a consequence of a 
result of Bass and Tate in \cite{bt}. 
\subsection{Reduction steps}

There are several reduction steps in Voevodsky's program to prove the
 general Bloch--Kato Conjecture before motivic cohomology operations 
are used. Firstly, the conjecture follows from another one:
\begin{conjecture}[Beilinson--Lichtenbaum, $p$--local version]
Let $k$ be a field and $w\geq 0$. Then 
\begin{equation}
\mathbb{H}_{\et}^{w+1}(\Spec k,\mathbb{Z}_{(p)}(w))=0
\end{equation}
\end{conjecture}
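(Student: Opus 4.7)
My plan is to trade the \'etale vanishing statement for the equivalent Bloch--Kato statement and then proceed by induction on the weight $w$, using motivic cohomology operations to detect a hypothetical nonzero obstruction class on a suitable norm variety. As a first step I would invoke the Leray spectral sequence for the change-of-topology map from the \'etale to the Nisnevich site, applied to the motivic complex $\mathbb{Z}_{(p)}(w)$. Combined with the Nesterenko--Suslin--Totaro identification of diagonal motivic cohomology with Milnor $K$--theory mod $p$, and with the inductive hypothesis at lower weights, this reduces the vanishing $\mathbb{H}_{\et}^{w+1}(\Spec k,\mathbb{Z}_{(p)}(w))=0$ to the bijectivity of the norm residue map $N$ in weight $w$, that is, to the Bloch--Kato statement itself.

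With this reduction in hand, fix a nonzero symbol $\au=\{a_1,\dots,a_w\}\in K_w^M(k)/p$ and suppose for contradiction that $N(\au)=0$. Following Rost and Voevodsky, I would attach to $\au$ a smooth projective \emph{norm variety} $\xa$ of dimension $p^{w-1}-1$, engineered so that $\au$ vanishes in $K_w^M(k(\xa))/p$ and so that $\xa$ carries the expected characteristic-number data of a Rost variety. One then studies the simplicial scheme $\xar$ obtained as the \v{C}ech nerve of $\xa\to\Spec k$: its motivic cohomology mediates between that of $\Spec k$ and that of $\xa$, and the inductive hypothesis pins down a critical range of $\hb(\xar,\mathbb{Z}/p)$ in terms of known Galois-cohomological invariants, producing a candidate lift of the hypothetical obstruction to a class $\mu$ of top weight on $\xar$.

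The heart of the argument is then to apply to $\mu$ a suitable Milnor primitive $Q_w\in\am$, built inductively from the motivic reduced powers $P^i$. An identity of the expected form $Q_w(\mu)=\alpha\cdot \mu^p$ turns out to be incompatible with the bound imposed by the dimension of $\xa$, forcing the obstruction to vanish and closing the induction. The principal obstacle---and the subject of the remainder of the paper---is giving $\am$ and its action on $\hb$ a rigorous foundation: one needs the pointed unstable motivic homotopy category $\hkp$ together with its $\pro$-stabilization, the representability of motivic cohomology by motivic Eilenberg--MacLane objects therein, and the construction of the $P^i$ via extended powers along symmetric group actions. Representability is where most of the technical difficulty concentrates, since it requires simultaneously controlling the Nisnevich topology, $\affs$-invariance, and the transfer structure on the motivic complex; the rest of the manuscript is devoted to this foundational work.
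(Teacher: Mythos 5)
Your proposal follows the broad outline of Voevodsky's program (induction on $w$, a splitting variety attached to a nonzero symbol, its \v{C}ech simplicial scheme, Milnor operations), but two of its load-bearing steps do not match the mechanism that actually makes the argument close, and as written they are genuine gaps. First, the reduction: you propose to reduce the \'etale vanishing to ``bijectivity of the norm residue map in weight $w$,'' i.e.\ to Bloch--Kato itself. That direction of the equivalence is the hard Suslin--Voevodsky comparison, and even granting it you would still be left with proving Bloch--Kato in weight $w$, so nothing is gained. The reduction used here is more economical: after passing to a $p$--special field, one needs only, for each nonzero symbol $\au\in K^M_w(k)/p$, a single extension $k_{\au}/k$ (the function field of a Pfister neighborhood when $p=2$) killing $\au$ and such that $i^*$ is injective on $\mathbb{H}^{w+1}_{\et}(-,\mathbb{Z}_{(p)}(w))$ (\fullref{ridotto}); iterating such extensions and passing to the colimit produces a field with $K^M_w/p=0$, where the vanishing is proved directly. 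The injectivity in \fullref{ridotto}\eqref{b} is precisely where the surjection $H^{w+1,w}(\xa,\mathbb{Z}_{(2)})\to\ker i^*$ (Rost's motivic decomposition) and \fullref{zero} enter.

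Second, the operational heart. Your identity $Q_w(\mu)=\alpha\cdot\mu^p$, claimed to be ``incompatible with the dimension bound,'' is not the argument, and I do not see how to make it one. The actual mechanism takes a hypothetical nonzero class $u$ in the image of the mod $2$ reduction $\tilde{H}^{w+2,w}(\xar,\mathbb{Z}_{(2)})\to\tilde{H}^{w+2,w}(\xar,\mathbb{Z}/2)$, pushes it by the composite $Q_{w-2}Q_{w-3}\cdots Q_1$ into bidegree $(2^w,2^{w-1})$, notes that the result is still the reduction of an integral class, and kills it by Rost's theorem that $\tilde{H}^{2^w,2^{w-1}}(\xar,\mathbb{Z})=0$. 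One then needs left multiplication by each $Q_i$ to be injective on the relevant group, and that is supplied by the vanishing of the Margolis homology $HM(\tilde{H}^{*,*}(\xar,\mathbb{Z}/2),Q_i)$ (\fullref{nullo}); this is where the hypothesis $\deg(s_{p^t-1}(X))\not\equiv 0 \bmod p^2$ and the algebraic Thom--Pontryagin construction are actually consumed, via a contracting homotopy $\phi$ with $\phi Q_t-Q_t\phi=c\neq 0$. Your sketch invokes the characteristic-number data but never uses it; without the Rost vanishing and the Margolis homology step the contradiction you are after does not materialize.
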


In the proof that this statement implies the Bloch--Kato conjecture we
begin to use the so called {\it motivic cohomology theory}
$H^{i,j}(-, A)$ with coefficients in an abelian group $A$.

\begin{proof}[Proof (that the Beilinson--Licthembaum Conjecture implies
the Bloch--Kato Conjecture)]
Let $H^{i,j}(X,\p)$ be the $(i,j)$th motivic
cohomology group of $X$ with
coefficients in $\p$ (see \fullref{motcoho}). Assuming that the
Beilinson--Lichtenbaum conjecture holds in degrees less or equal than $n$,
Voevodsky proved in \cite[Corollary~6.9(2)]{on2tors} that
$H^{i,j}(X,\p)\cong \mathbb{H}^{i}_{\et}(X,\p(j))$ for all $i\leq j\leq
n$. On the other hand, in Theorem 6.1 of the same paper, he shows that
$\mathbb{H}^{i}_{\et}(X,\p(j))\cong \mathbb{H}^i_{\et}(X,\mu_p^{\otimes
  j})$. We conclude by recalling that, 
if $i=j$ and $X=\Spec k$, the natural
homomorphism $K_i^M(k)/(p)\to H^{i,i}(\Spec k,\p)$
is an isomorphism (see Suslin--Voevodsky \cite[Theorem~3.4]{sus-voe}).
\end{proof}

The proof of the Beilinson--Lichtenbaum Conjecture is by induction on
the \, index $w$. For $w=0$ we have that
$$\mathbb{H}_{\et}^1(\Spec k,\mathbb{Z}(0))=\mathbb{H}_{\et}^1(\Spec k,
\mathbb{Z})=H^{1,0}(\Spec k,\mathbb{Z})=0$$
and in the case $w=1$, we know that
$\mathbb{Z}(1)=\mathbb{G}_m[-1]$, thus
$$\mathbb{H}_{\et}^2(\Spec k,\mathbb{Z}(1))=H^1_{\et}(\Spec k,\mathbb{G}_m)
=\Pic(\Spec k)=0$$

The first reduction step is that we can assume that
$k$ has no finite extensions of degree prime to $p$ (for the time
being such field will be called $p$--special). Indeed,
$\mathbb{H}_{\et}^{w+1}(\Spec k,\mathbb{Z}_{(p)}(w))$ injects in
$\mathbb{H}_{\et}^{w+1}(\Spec L,\mathbb{Z}_{(p)}(w))$ for any prime to
$p$ degree field extension $L/k$, because the composition
$$\xymatrix{\mathbb{H}_{\et}^{*}(\Spec k,\mathbb{Z}_{(p)}(*))
\ar[r]^{\mathrm{transfer}}&
\mathbb{H}_{\et}^{*}(\Spec L,\mathbb{Z}_{(p)}(*))
\ar[r]^{i^*} & \mathbb{H}_{\et}^{*}(\Spec k,\mathbb{Z}_{(p)}(*))}
$$
is multiplication by $[L:k]$, hence it is an isomorphism. Therefore,
letting $F$ to be the colimit over all the prime to $p$ field
extensions of $k$, to conclude it suffices to show the vanishing
statement for $F$. 

Secondly, if $k$ is $p$--special and $K_w^M(k)/(p)=0$, it is possible
to prove directly that
$\mathbb{H}_{\et}^{w+1}(\Spec k,\mathbb{Z}_{(p)}(w))=0$. 
Hence, to prove the Bloch--Kato Conjecture it suffices to prove the 
following statement.
\begin{theorem}\label{ridotto}
 For any $0\neq \{a_1,a_2,\ldots, a_w\}=\au\in K_w^M(k)/(p)$,
  there exists a field extension $k_{\au}/k$ with $k$ being $p$--special such
  that:
\begin{enumerate}
\item\label{a} $\au\in\ker i_*$, where $i_*\co K_w^M(k)/(p)\to
  K_w^M(k_{\au})/(p)$ is the induced homomorphism;
\item\label{b}
 $i^*\co \mathbb{H}^{w+1}_{\et}(\Spec k,\mathbb{Z}_{(p)}(w))\hookrightarrow
  \mathbb{H}_{\et}^{w+1}(k_{\au},\mathbb{Z}_{(p)}(w))$ is an injection. 
\end{enumerate} 
\end{theorem}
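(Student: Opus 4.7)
The plan is to take $k_{\au}$ to be the function field of an appropriate ``splitting variety'' $X_{\au}$ for the symbol $\au$. First I would construct a smooth projective $k$--variety $X_{\au}$ whose generic point trivializes $\au$ in Milnor $K$--theory modulo $p$: when $p=2$ the Pfister quadric associated with $\{a_1,\dots,a_w\}$ (or one of its Pfister neighbors) does the job, and for general $p$ one uses a Rost ``norm variety'' of dimension $p^{w-1}-1$. Setting $k_{\au}:=k(X_{\au})$, property (a) is then essentially built into the construction, since the generic point of $X_{\au}$ furnishes a $k_{\au}$--rational point over which $\au$ becomes zero in $K_w^M(k_{\au})/(p)$.

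The real content is in property (b). My strategy would be to reduce the injectivity statement for \'etale cohomology to a vanishing statement for the motivic cohomology of the \v{C}ech simplicial scheme $\check{C}(X_{\au})$ associated to $X_{\au}$. Using the inductive hypothesis on $w$, which supplies the comparison $H^{i,j}(-,\p)\cong \mathbb{H}_{\et}^i(-,\p(j))$ in the Beilinson--Lichtenbaum range $i\leq j<w$, the cofibre sequence $\check{C}(X_{\au})_+\to \Spec(k)_+$ translates $\ker i^*$ into a specific motivic cohomology group of $\check{C}(X_{\au})$.

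To kill that group I would exploit the action of motivic Steenrod operations, whose construction is the main technical theme of the remainder of the manuscript. Starting from the symbol class $\delta(a_1)\cup\cdots\cup\delta(a_w)$ and its motivic lift to $\check{C}(X_{\au})$, one applies an appropriate composition of Milnor-type operations $Q_i$ whose total bi-degree is dictated by the dimension of $X_{\au}$; the resulting class obstructs any non-zero element of $\ker i^*$ by detecting it on the Rost motive summand of $\check{C}(X_{\au})$.

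The hard part will be property (b). It depends both on the construction of the motivic Steenrod operations with the correct bi-degrees and on the delicate analysis of the $p$--local motive of $\check{C}(X_{\au})$. This is precisely the reason the remainder of the paper develops an unstable motivic homotopy category $\hkp$ in which motivic cohomology is representable: only in such a setting can one even speak of cohomology operations, let alone compute with them.
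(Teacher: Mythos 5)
Your proposal follows essentially the same route as the paper: take $k_{\au}$ to be the function field of the Pfister quadric (norm variety for odd $p$), get (a) from the standard splitting property, and reduce (b) via the \v{C}ech simplicial scheme $\wcheck{C}(X_{\au})$ and the inductive Beilinson--Lichtenbaum comparison to the vanishing of $H^{w+1,w}(\xa,\mathbb{Z}_{(2)})$, which is then killed by the Milnor operations $Q_i$ together with Rost's results on the motive of the quadric. The only ingredient you leave implicit is that the injectivity of multiplication by the $Q_i$ is controlled by the vanishing of the Margolis homology of $\tilde{H}^{*,*}(\xar,\mathbb{Z}/2)$, which is where the characteristic-number condition on the subquadrics enters; otherwise the outlines agree.
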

Surprisingly, the best candidates for such fields, we have knowledge of, 
are function fields of appropriate algebraic varieties. Indeed,
$k(a_i^{1/p})$ are all fields satisfying condition \eqref{a} for any
$1\leq i\leq w$, and condition \eqref{b} is precisely the complicated
one. The approach to handle \eqref{b} is to give a sort of
underlying ``algebraic variety'' structure to the field
$k_{\au}$. Voevodsky proved the Milnor Conjecture by showing that, if
$p=2$, we can take $k_{\au}$ to be the function field of the
projective quadric $Q_{\au}$ given by the equation 
\begin{equation}
\langle 1,-a_1\rangle\otimes\langle 1,-a_2\rangle\otimes\cdots\otimes
\langle 1,-a_{w-1}\rangle\oplus \langle -a_w\rangle=0
\end{equation}
with the convention that $$\langle a,b\rangle\otimes\langle c,d\rangle=
act_1^2+adt_2^2+bct_3^2+bdt_4^2$$ and $$\langle k_1,k_2,\ldots,
k_m\rangle \oplus \langle h\rangle=k_1t^2_1+k_2t_2^2+\cdots
+k_mt_m^2+ht_{m+1}^2$$ 
Such variety is known as {\it Pfister
  neighborhood} of the quadric
$\langle 1,-a_1\rangle\otimes
  \langle 1,-a_2\rangle\otimes\cdots\otimes\langle 1,-a_{w-1}\rangle$.
In order to prove this, Voevodsky used two results of Markus Rost about such
quadrics translated in a context in which it makes sense to use
homotopy theoretical tools on algebraic varieties and an argument
involving motivic cohomology operations. We are now going to examine
more carefully these techniques.

\begin{remark}
Recently Rost worked on finding substitute varieties for
Pfister neighborhoods at odd primes. His candidates are called
{\it norm varieties} and he uses in his program certain formulae
called {\it (higher) degree formulae} (see Rost \cite{rost-split,icm}
and Borghesi \cite{io-for-grado}) to prove the relevant
properties in order to fit in Voevodsky's framework. 
\end{remark}

At this stage, we will take for granted the existence of a localized
category $\hkp$ whose objects are pointed simplicial algebraic
varieties (or, more generally, pointed simplicial sheaves for the Nisnevich
topology on the site of smooth schemes over $k$). The localizing
structure is determined by $\hkp$ being the homotopy category
associated to an ($\aff$) model structure on the category of
simplicial sheaves. These categories will be discussed more
extensively later in the manuscript (see \fullref{puggia}). At
this point all we need to know is that
\begin{enumerate}
\item\label{a1} the objects of $\hkp$ include pointed simplicial
  algebraic varieties;
\item\label{b1} any morphism $f\co \mathcal{X}\to \mathcal{Y}$ in $\hkp$ 
can be  completed to a sequence 
\begin{equation}
\mathcal{X}\stackrel{f}{\longrightarrow}\mathcal{Y}\longrightarrow\mathcal{Z}
\longrightarrow\mathcal{X}\wedge S_s^1\stackrel{f\wedge
\id}{\longrightarrow} \mathcal{Y}\wedge S_s^1 \longrightarrow \cdots 
\end{equation}
inducing long exact sequences of sets, or abelian groups when
appropriate, by applying the functor $\Hom_{\hkp}(-,
\mathcal{W})$ for any $\mathcal{W}\in\hkp$;
\item\label{c1} for all integers $i$, $j$ and abelian group $A$, there exist 
objects $K(A(j),i)\in\hkp$ such that $\Hom_{\hkp}(X_+,
K(A(j),i))=H^{i,j}(X,A)$ for any smooth algebraic variety
$X$ ($X_+$ is the pointed object associated to $X$, that is $X\amalg
\Spec k$). Moreover, defining $H^{i,j}(\mathcal{X},
A)=\Hom_{\hkp}(\mathcal{X}, K(A(j),i))$ for any pointed simplicial
algebraic variety $\mathcal{X}$, we have that, if $k$ is a perfect
field, the following equalities hold (see Voevodsky
\cite[Theorem~2.4]{voe-oper}):
\begin{eqnarray}
H^{i+1,j}(X\wedge S_s^1,A) & \cong & H^{i,j}(X,A) \\
H^{i+2,j+1}(X\wedge\pro,A) & \cong & H^{i,j}(X,A)
\end{eqnarray}
where $\wedge$ is the usual categorical smash product defined as
$\mathcal{X}\times \mathcal{Y}/\mathcal{X}\vee\mathcal{Y}$. 
\end{enumerate}  
Perfectness of the base field is not restrictive for the purpose of
the Bloch--Kato Conjecture since such conjecture holding on
characteristic zero fields implies the same result for 
fields of characteristic different from $p$. 
Let $X$ be a variety; denote by $\wcheck{C}(X)$ the simplicial
variety given by $\wcheck{C}(X)_n=X\times\smash{\stackrel{n+1}{\cdots}} \times
X$ with projections and diagonals as structure maps. The main feature
of such simplicial variety is that it becomes simplicially equivalent
to a point (ie $\Spec k$) if $X$ has a rational point $x\co \Spec k\to
X$, a contracting homotopy being 
\begin{equation}
\id\times x\co X\times\stackrel{i}{\cdots}\times X\to X\times
\stackrel{i+1}{\cdots}\times X
\end{equation}
By \eqref{b1}, the canonical map $\wcheck{C}(X)_+\to \Spec k_+$ can be 
completed to a sequence
\begin{equation}
\label{vai}
\wcheck{C}(X)_+\to \Spec k_+\to \tilde{C}(X)\to \wcheck{C}(X)_+\wedge
S_s^1\to\cdots 
\end{equation}
for some object $\tilde{C}(X)$ of $\mathcal{H}_\bullet(k)$.

We now assume $p=2$ as in
that case things are settled and we let $\xa$ to be the simplicial
smooth variety $\wcheck{C}(R_{\au})$, $R_{\au}$
being the Pfister neighborhood associated to the symbol $\au$. 
Part \eqref{a} of
\fullref{ridotto} for $k_{\au}=k(Q_{\au})$ is consequence of
a standard property of Pfister quadrics: if $R_{\au}$ has a rational
point on a field extension $L$ over $k$, then $\au$ is in the kernel
of the map $i^*\co K^M_*(k)/(2)\to K^M_*(L)/(2)$ (see Voevodsky
\cite[Proposition~4.1]{on2tors}). Part \eqref{b} follows 
from two statements: 
\begin{enumerate}
\renewcommand{\labelenumi}{(\roman{enumi})}
\item there exists a surjective map $H^{w+1,w}(\xa,\mathbb{Z}_{(2)})\to 
\ker i^*$, and 
\item $H^{w+1,w}(\xa,\mathbb{Z}_{(2)})=0$. 
\end{enumerate}
The first statement is already
nontrivial and uses various exact triangles in certain triangulated
categories, one of which is derived by a result of Rost on the Chow
motive of $R_{\au}$, and a
very technical argument (not mentioned by Voevodsky \cite{on2tors})
involving commutativity of the functor $\mathbb{H}_{\et}^*(-,\mathbb{Z}_{(2)}(*))$ with limits. The reason for introducing the
simplicial algebraic variety $\xa$ is to have an object sufficiently
similar to $\Spec k$, and at the same time sufficiently different to
carry homotopy theoretic information. The similarity to $\Spec k$ 
is used to prove (i), whereas the homotopy information plays a
fundamental role in showing that 
\begin{theorem}\label{zero}
$H^{w+1,w}(\xa,\mathbb{Z}_{(2)})=0$   
\end{theorem}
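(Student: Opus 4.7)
The plan is to argue by contradiction, using the cofiber sequence \eqref{vai} to shift the problem from $\xa$ to the cone $\xar$, and then using motivic Milnor operations together with Rost's decomposition of the Chow motive of the Pfister neighborhood $R_{\au}$. Assume $\alpha\in H^{w+1,w}(\xa,\mathbb{Z}_{(2)})$ is nonzero; I will extract a class in the top bidegree of mod $2$ motivic cohomology of $\xar$ and show that Rost's theorem forbids its existence.

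Applying $\Hom_{\hkp}(-,K(\mathbb{Z}_{(2)}(w),*))$ to \eqref{vai} yields a long exact sequence in motivic cohomology. Over the perfect base field $k$ one has $H^{p,q}(\Spec k,\mathbb{Z}_{(2)})=0$ whenever $p>q$; this kills the $\Spec k$ terms in the relevant range and produces an isomorphism
\[
H^{w+1,w}(\xa,\mathbb{Z}_{(2)})\;\cong\;H^{w+2,w}(\xar,\mathbb{Z}_{(2)}).
\]
Next one passes to mod $2$ coefficients via the Bockstein long exact sequence for $0\to\mathbb{Z}_{(2)}\to\mathbb{Z}_{(2)}\to\mathbb{Z}/2\to 0$: a nonzero class $\alpha$ either lifts to some $\widetilde{\alpha}\in H^{w+1,w}(\xar,\mathbb{Z}/2)$ with $\beta(\widetilde{\alpha})=\alpha$, or is infinitely $2$-divisible (hence vanishes by finite-generation of integral motivic cohomology in this range). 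Either way the problem reduces to forbidding such a $\widetilde{\alpha}$.

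The main step then applies the motivic Milnor operations $Q_0,Q_1,\dots,Q_{w-1}\in\am$, derivations on mod $2$ motivic cohomology of bidegrees $(2^{i+1}-1,\,2^i-1)$, to $\widetilde{\alpha}$. The iterated image lands in the top nontrivial bidegree of $\widetilde{H}^{*,*}(\xar,\mathbb{Z}/2)$, a bidegree whose cohomology is controlled by Rost's theorem on the Chow motive of $R_{\au}$. A careful bookkeeping of bidegrees together with the derivation property of each $Q_i$ and the module structure of $\widetilde{H}^{*,*}(\xar,\mathbb{Z}/2)$ over a suitable subalgebra of $\am$ shows that the iterated image can only hit the Rost generator if $\widetilde{\alpha}$ itself lies in the image of $Q_0$, contradicting the assumption that $\alpha=\beta(\widetilde{\alpha})$ is nonzero.

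The main obstacle is this last step: matching the Rost Chow-motivic decomposition of $R_{\au}$ to concrete information about $\widetilde{H}^{*,*}(\xar,\mathbb{Z}/2)$ and controlling the iterated $Q_i$-action in the required bidegrees is the technical heart of the argument, and is precisely where the specific geometry of Pfister neighborhoods — as opposed to arbitrary splitting varieties — is essential.
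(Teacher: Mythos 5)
Your skeleton is the right one (suspension isomorphism from $\xa$ to $\xar$, reduction to $\mathbb{Z}/2$ coefficients, iterated Milnor operations pushing a putative class into bidegree $(2^w,2^{w-1})$ where a Rost vanishing result applies), but both places where you wave your hands are exactly where the real content lives, and your substitutes for them do not work.

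First, the passage to mod $2$ coefficients. The paper does not use a Bockstein lifting or any finite-generation claim (motivic cohomology of the pro-object $\xar$ is not known to be finitely generated, so ``infinitely $2$-divisible hence zero'' is unjustified). Instead one observes that $R_{\au}$ has closed points of degree $2$, so base change to such a point makes $\xar$ contractible, and a transfer argument gives $2\cdot H^{*,*}(\xar,\mathbb{Z}_{(2)})=0$. Hence the reduction map $\tilde{H}^{w+2,w}(\xar,\mathbb{Z}_{(2)})\to\tilde{H}^{w+2,w}(\xar,\mathbb{Z}/2)$ is \emph{injective}, and it suffices to kill its image. One then works with $u=$ the mod $2$ reduction of the integral class in bidegree $(w+2,w)$ --- not with a Bockstein preimage in $(w+1,w)$ --- and applies $Q_{w-2}\cdots Q_1$ (not $Q_0,\dots,Q_{w-1}$; your list overshoots the target bidegree). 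Since $u$ is a reduction of an integral class, so is $Q_{w-2}\cdots Q_1u$ by the relation $Q_i=Q_0(0,\dots,1)+(0,\dots,1)Q_0$, and $\tilde{H}^{2^w,2^{w-1}}(\xar,\mathbb{Z})=0$ by Rost's \emph{second} result (the spinor-norm theorem, not the Chow-motive decomposition of $R_{\au}$, which enters elsewhere in the program); so the iterated class vanishes outright --- there is no ``Rost generator'' to hit.

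Second, and more seriously, you give no argument that each $Q_i$ acts \emph{injectively} on $\tilde{H}^{*,*}(\xar,\mathbb{Z}/2)$ in the relevant bidegrees; without that, the vanishing of $Q_{w-2}\cdots Q_1u$ says nothing about $u$. The obstruction to injectivity is the Margolis homology $HM(\tilde{H}^{*,*}(\xar,\mathbb{Z}/2),Q_i)$ (the source group $Q_i\cdot\tilde{H}^{w-i,w-1}$ already vanishes by the Beilinson--Lichtenbaum inductive hypothesis in degrees $i\le j\le w-1$), and its vanishing is precisely \fullref{nullo}: it requires producing, for each $i$, a smooth projective subquadric $X_i\to R_{\au}$ of dimension $2^i-1$ with $\deg(s_{2^i-1}(X_i))\not\equiv 0 \bmod 4$, and running the Thom--Pontryagin/Thom-class argument $Q_i\tau\neq 0$ to build a contracting homotopy $\phi$ with $\phi Q_i-Q_i\phi=c\neq 0$. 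Your ``careful bookkeeping of bidegrees and the module structure over a subalgebra of $\am$'' is not a proof of this step, and no purely formal bookkeeping can replace it: this is where the specific geometry (characteristic numbers of the subquadrics) enters.
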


\begin{proof}
Let $\xar$ be $\tilde{C}(R_{\au})$.
Since $R_{\au}$ has points of degree two $\Spec E\to R_{\au}$ over $k$,
we have that $(\xar)_E\cong \Spec E$, because $f_E$ (the base change of
the structure map $f\co \xar\to \Spec k$ over $\Spec E$) is a simplicial 
weak equivalence. By a transfer argument, we
see that $2H^{*,*}(\xar,\mathbb{Z}_{(2)})=0$. Moreover, property
\eqref{c1} of $\hkp$ implies that
$H^{w+1,w}(\xa,\mathbb{Z}_{(2)})\cong
\tilde{H}^{w+2,w}(\xar,\mathbb{Z}_{(2)})$, because
$H^{i,j}(\Spec k,\mathbb{Z})=0$ if $i>j$. Thus, it suffices to show
that the image of the reduction modulo $2$ map
\begin{equation}\label{mod2}
\tilde{H}^{w+2,w}(\xar,\mathbb{Z}_{(2)})\to
\tilde{H}^{w+2,w}(\xar,\mathbb{Z}/(2))
\end{equation}
is zero. The groups
$H^{i,j}(\xar,\mathbb{Z}/2)$ are known for $i\leq j\leq w-1$,
because of the following comparison result (cf Voevodsky
\cite[Corollary~6.9]{on2tors}):
\begin{theorem}
Assume that the Beilinson--Lichtenbaum Conjecture holds in degree
$n$. Then for any field $k$ and any smooth simplicial scheme
$\mathcal{X}$ over $k$,
\begin{enumerate}
\item the homomorphisms
$$H^{i,j}(\mathcal{X},\mathbb{Z}_{(p)})\to
  \mathbb{H}^{i}_{\et}(\mathcal{X},\mathbb{Z}_{(p)}(j))$$
are isomorphisms for $i-1\leq j\leq n$ and monomorphisms for $i=j+2$
  and $j\leq n$; and
\item the homomorphisms
$$H^{i,j}(\mathcal{X},\mathbb{Z}/p^m)\to
  \mathbb{H}^{i}_{\et}(\mathcal{X},\mathbb{Z}/p^m(j))$$
are isomorphisms for $i\leq j\leq n$ and monomorphisms for $i=j+1$ and
  $j\leq n$ and for any nonnegative integer $m$.
\end{enumerate}
\end{theorem}
In our case we have 
$$
\tilde{H}^{i,j}(\xar,\mathbb{Z}/2)\cong
\tilde{\mathbb{H}}_{\et}^i,(\xar, \mathbb{Z}/2(j))\cong 
\tilde{\mathbb{H}}_{\et}^i,(\Spec k, \mathbb{Z}/2(j))=0
$$
because of the inductive assumption on the
Beilinson--Lichtenbaum Conjecture holding through degree $w-1$ and
\cite[Lemma~7.3]{on2tors}. In other degrees almost nothing is known, 
except that 
$H^{2^w-1,2^{w-1}}(\xa,\mathbb{Z})=\tilde{H}^{2^w,2^{w-1}}
(\xar,\mathbb{Z})=0$ by Theorem 4.9, which uses the second result of
Rost \cite{rost-spinor}. Let $u$ be a nonzero class in the 
image of \eqref{mod2}, by means
of some hypothetical motivic cohomology operation $\theta$ acting on
$H^{*,*}(-,\mathbb{Z}/2)$, we can first try to move $u$ up to
the degree $(2^w,2^{w-1})$ and then compare it with the datum 
$\smash{\tilde{H}^{2^w,2^{w-1}}(\xar,\mathbb{Z})=0}$. What we are about to
write now is strictly related to \fullref{operaz} of this
manuscript. Let $\smash{Q_i^{\top}}$ be the
topological Steenrod operations defined by Milnor
\cite{milnor}. Let us assume that there exist operations which we
still denote by $Q_i$ that
act on $H^{*,*}(-,\mathbb{Z}/p)$ and that satisfy similar
properties as $\smash{Q_i^{\top}}$. In particular, we should expect that
$Q_i^2=0$ and we could compute the bidegrees of $Q_i$ from the equality
\begin{equation}\label{sba}
\smash{Q_i=Q_0(0,\ldots,1)+(0,\ldots,1)Q_0}
\end{equation}
where $\smash{(0,\ldots,1)}$, with the 1 in the $i$th place, is the
hypothetical motivical
cohomological operations defined as the dual to the canonical class
$\xi_i$ of the dual of the motivic Steenrod algebra
(cf Voevodsky \cite{voe-oper} and Milnor \cite{milnor}). Indeed,
the bidegree of $(0,\ldots,1)$ is $(2p^i-2,p^i-1)$ and
$Q_0$ should be the Bockstein, hence bidegree $(0,1)$. This shows that 
$|Q_i|=(2p^i-1, p^i-1)$ or $(2^{i+1}-1, 2^i-1)$ if $p=2$. 
\fullref{operaz} will be devoted to the construction of such
cohomology operations. Thus, $Q_{w-2}Q_{w-3}\cdots Q_1u$ belongs to 
$\smash{\tilde{H}^{2^w,2^{w-1}}(\xar,\mathbb{Z}/2)}$. By assumption, the class
$u$ is the reduction of an integral cohomology class and, by the
equality \eqref{sba}, so does the class $Q_{w-2}Q_{w-3}\cdots Q_1u$,
that therefore must be zero. To finish the proof it suffices to show
that multiplication by $Q_i$ on the relevant motivic cohomology group
$\tilde{H}^{w-i+2^{i+1}-1,w-i+2^1-1}(\xar,\mathbb{Z}/2)$ is
injective. Since
\begin{equation}
Q_i\cdot\tilde{H}^{w-i,w-1}(\xar,\mathbb{Z}/2)\subset
\tilde{H}^{w-i+2^{i+1}-1,w-i+2^1-1}(\xar,\mathbb{Z}/2)
\end{equation}
and the former
group is zero as mentioned above, the obstruction to injectivity of left
multiplication by $Q_i$ in degree $(w{-}i{+}2^{i+1}{-}1,w{-}i{+}2^1{-}1)$
is given
by the {\it $i$th Margolis homology} of the module 
$\tilde{H}^{*,*}(\xar,\mathbb{Z}/2)$ in degree
$(w{-}i{+}2^{i+1}{-}1,w{-}i{+}2^1{-}1)$. Recall that
given a graded left module over the Steenrod algebra $M$, the Margolis
homology of $M$ is defined as $H_1$ of the chain complex
$$\{M\stackrel{Q_i\cdot}{\longrightarrow}
  M\stackrel{Q_i\cdot}{\longrightarrow}M\}$$
and is usually denoted with $HM(M,Q_i)$.
\begin{remark}\label{vaff}
This argument, along with 
the successful use of
$Q_i$ to prove the vanishing of Margolis homology of $\xar$,
constituted the strongest motivation to construct cohomological
operations analogous to the Steenrod operations. Voevodsky did
this in \cite{voe-oper} and it turned out that 
the Hopf algebra (actually Hopf algebroid) structure of these 
operations may depend on the base field. More precisely, at odd
primes or if $\sqrt{-1}\in k$, then the multiplication and
comultiplication between the motivic Steenrod operations are as
expected, but at the prime $2$ and in the case $\sqrt{-1}\not\in
k$ both the product {\it and} the coproduct are different. For
more details see \fullref{duale}, \fullref{cartan},
\fullref{qt} of this text, \cite{voe-oper} and
\cite{io-mor}. From these formulae we see that the formula
\eqref{sba} barely holds even if $\sqrt{-1}\not\in k$.
\end{remark}  
It suffices to prove that $HM(H^{*,*}(\xar,\mathbb{Z}/2),Q_n)=0$ for
all $n\geq 0$. If $X$ is a smooth, projective variety of pure
dimension $i$, then $s_{i}(X)$ is the zero
cycle in $X$ represented by
$-\Newt_i(c_1(T_X),c_2(T_X),\ldots,c_i(T_X))$, that is the $i$th
Newton polynomial in the Chern classes of the tangent bundle of
$X$. Such polynomials depend only on the index $i$, thus the notation 
$s_{i}(X)$ for them may seem
strange. However, it is motivated by the property of these
polynomials being dual to certain monomials in the motivic homology of
the {\it classifying sheaf} $BU$. Indeed, in analogy to the
topological case, the set
$\{s_\alpha(\gamma)\}_{\alpha=(\alpha_1,\alpha_2,\ldots,\alpha_n,\ldots)}$ 
forms a basis for the motivic cohomology of $BU$, where $\alpha_i$ are
nonnegative integers and $\gamma\to BU$ is the colimit of universal
$n$--plane bundles over the infinite grassmanians
$Gr_n(\mathbb{A}^{\infty})$ (see the author's article \cite{io-mor}).  

\begin{theorem}\label{nullo}
Let $Y$ be a smooth, projective variety over $k$ with a map $X\to Y$
such that $X$ is smooth, projective variety of dimension $d=p^t-1$ with 
$\deg(s_{d}(X))\not\equiv 0\mod
p^2$. Then $HM(H^{*,*}(\tilde{C}(Y),\p),Q_t)=0$ for all $n\geq 0$.
\end{theorem}

To prove \fullref{zero} we apply the proposition to $X=X_i$
for smooth subquadrics $X_i$ of dimension $2^i-1$ for all $1\leq
i\leq w-2$.  The fact that the condition on the characteristic
number of the $X_i$ is satisfied is readily checked (see Voevodsky
\cite[Proposition~3.4]{on2tors}).

\begin{proof}
The pointed object $\tilde{C}(Y)$, which in topology
would just be equivalent to a point, it looks very much like $\Spec k$, 
but it is different enough to contain homotopy theoretic invariants 
of algebraic varieties mapping to $Y$. To prove the vanishing of the Margolis
homology, 
we analyze the two possible cases: in the first the scheme $Y$ has
points of degree prime with $p$, in the second the prime $p$ divides
all the degrees of points of $Y$. The former case implies that 
$\tilde{H}^{*,*}(\tilde{C}(Y),\p)=0$ by a transfer argument. Thus, we
can assume the latter property holds for $Y$. We want to present a 
contracting homotopy 
\begin{equation}
\phi\co \tilde{H}^{*,*}(\tilde{C}(Y),\p)\to
\tilde{H}^{*-2d-1,*-d}(\tilde{C}(Y),\p)
\end{equation}
satisfying the relation 
\begin{equation}\label{contr}
\phi Q_t-Q_t\phi=c
\end{equation}
for some nonzero $c\in\p$. In topology, the assumption on the
characteristic number of $X$ 
and the fact that all the caracteristic
numbers of $Y$ (a subset of degrees of points of $Y$) are divisible 
by $p$ are equivalent to $Q_t\tau\neq 0$ in the cohomology of the 
cone of $f_X\co  S^{d+n}\to
\Th(\nu_X)$ obtained via the Thom--Pontryagin construction, where $\nu$ 
is the normal bundle to an embedding
$X\hookrightarrow \mathbb{R}^{d+n}$ such that $\nu$ has a complex 
structure and $\tau\in H^{n}(\Th(\nu_X),\p)$
 is the Thom class. Existence of an ``algebraic''
Thom--Pontryagin construction (see Voevodsky \cite[Theorem~2.11]{on2tors} or
Borghesi \cite[Section~2]{io-for-grado}) allows to conclude the same
statement in the category $\hkp$. We will now transform the 
equality $Q_t\tau\neq 0$ in
the equality \eqref{contr}. Consider the sequence 
\begin{equation}
(\pro)^{\wedge d+n}\stackrel{f_X}{\to} \Th(\nu_X)\to \Cof(f_X)
\stackrel{\delta}{\to} (\pro)^{\wedge d+n}\wedge S_s^1\to\cdots 
\end{equation}
and let $Q_t\tau=c\gamma\neq 0$, where $\gamma=\delta^*\iota$ and
$\iota$ is the canonical generator of
$$H^{2(d+n)+1,d+n}((\pro)^{\wedge d+n}\wedge S_s^1,\p).$$
Consider now the chain of morphisms
\begin{equation}
\label{biglia}
\bfig
\barrsquare/<-`<-`->`->/<1700,500>[
  \tilde{H}^{*+2n,*+n}(\Cof(f_X),\p)`
  \tilde{H}^{*+2n,*+n}((\pro)^{n+d}{\wedge}S_s^1{\wedge}\tilde{C}(Y),\p)`
  \tilde{H}^{*,*}(\tilde{C}(Y),\p)`
  \tilde{H}^{*-2d-1,*-d}(\tilde{C}(Y),\p);
  \delta^*{\wedge}\id`
  \tau{\wedge}-`
  \cong`
  \phi]
  \efig
\end{equation}
Notice that $\delta^*\wedge \id$ is an isomorphism. Indeed,
one thing that makes $\tilde{C}(Y)$ similar
to $\Spec k$ is that $\Th(\nu_X)\wedge\tilde{C}(Y)$ is simplicially weak
equivalent to $\Spec k$. We can see this by smashing the sequence
\eqref{vai} with $\nu_+$ and using that the
projection $(\wcheck{C}(Y)\times \nu)_+\to \nu_+$ is a simplicial weak
equivalence because of the assumption on existence of a morphism $X\to
Y$ and \cite[Lemma~9.2]{on2tors}. Let now $y\in \tilde{H}^{*,*}(
\tilde{C}(Y),\p)$ and consider the equality 
\begin{equation}\label{bello}
c\gamma\wedge y=Q_t\tau\wedge y=\tau\wedge Q_t(y)-Q_t(\tau\wedge y)
\end{equation}
with the last equality following from the motivic Steenrod
algebra coproduct structure (cf \fullref{qt}, Voevodsky
\cite[Proposition~13.4]{voe-oper} or Borghesi \cite[Corollary 5]{io-mor})
and that $H^{i,j}(X,\p)=0$ if $X$ is a smooth variety and $i>2j$, hence
$Q_i\tau=0$ for $i<t$ because of the motivic Thom isomorphism. Equality
\eqref{bello} is equality \eqref{contr} if we let $\phi$ as in diagram
\eqref{biglia} and we identify isomorphisms.
\end{proof}

This finishes the proof of \fullref{zero} and the motivational
part of this manuscript.
\end{proof}


\section{Foundations and the Dold--Kan theorem}\label{topo}

We will proceed now with the creation of the environment in which
algebraic varieties preserve various algebraic invariants and at the
same time we can employ the homotopy theoretic techniques which have
been used by Voevodsky to prove \fullref{zero}. 
\begin{definition}\label{repr}
A functor $F\co \mathcal{E}\to \mathcal{F}$, with every object of
$\mathcal{F}$ being a set, is {\em representable} in the
category $\mathcal{E}$ if there exists an object $K_F\in\mathcal{E}$
such that the bijection of sets 
\begin{equation}\label{0}
F(X)\cong \Hom_{\mathcal{E}}(X, K_F)
\end{equation}
holds and it is natural in $X$ for any $X\in\mathcal{E}$.
\end{definition} 
In our situation, $\mathcal{F}$ will be the category of abelian
groups, and the congruence \eqref{0} is to be understood as abelian
groups. In algebraic topology there are several ways to prove 
that singular cohomology is a
representable functor in the unstable homotopy category 
$\mathcal{H}$. This is the ordinary homotopy category associated to
the model structure on topological spaces in which fibrations are the
{\it Serre} fibrations, as opposed to {\it Hurewicz} fibrations.
 Here we will recall one way to show such representability which
 serves as a model for proving the same result in the algebraic
 context. Originally, singular cohomology has
been defined as the homology of a (cochain) complex of abelian groups, 
hence it is resonable to find some connection between the category of 
complexes of abelian groups and the underlying objects of
$\mathcal{H}$. The latter maybe seen as a localized category of the
category of simplicial sets, thus we may set as our starting point the 
\begin{theorem}[Dold--Kan]
\label{Dold--Kan}
Let $\mathcal{A}$ be an abelian category with enough injective
objects. Then there exists a pair of adjoint functors $N$ and $K$ ($N$
left adjoint to $K$) which induce and equivalence of categories 
\begin{equation}
\xymatrix{\Ch_{\geq 0}(\mathcal{A}) \ar@<.5ex>[r]^{K} & 
\Delta^{\op}(\mathcal{A})\ar@<.5ex>[l]^{N}}
\end{equation} 
in which simplicially homotopic morphisms correspond to chain
homotopic maps and viceversa.
\end{theorem}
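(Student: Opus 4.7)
The plan is to construct $N$ and $K$ explicitly, verify they form an adjoint equivalence by exhibiting unit and counit natural isomorphisms, and then transport homotopies through the equivalence.

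Define $N$ on a simplicial object $A_\bullet$ by the normalized Moore complex $N_n(A) = \bigcap_{i=0}^{n-1}\ker(d_i)$ with differential $(-1)^n d_n$. A direct calculation from the simplicial identities shows that the unnormalized Moore complex $C_n(A) = A_n$ (with $d = \sum_i (-1)^i d_i$) decomposes as $C(A) = N(A) \oplus D(A)$, where $D(A) = \sum_i \mathrm{im}(s_i)$ is the degenerate subcomplex and is contractible, so $N(A)$ computes $H_\ast(C(A))$. For the inverse, define
$$K(C)_n = \bigoplus_{\sigma\co [n]\twoheadrightarrow [k]} C_k^{(\sigma)},$$
with the sum ranging over surjections in $\Delta$. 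Faces and degeneracies act on the summand $C_k^{(\sigma)}$ by factoring each composite $\sigma\circ\alpha$ into an epi followed by a mono: the epi part reindexes to a new summand, and the mono part contributes the identity, zero, or $\pm\partial$, according to whether it is the identity, omits a face above the top, or equals the top face inclusion. Checking the simplicial identities on $K(C)$ is routine bookkeeping.

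The adjunction unit and counit are then produced from the Eilenberg--Zilber decomposition
$$A_n \cong \bigoplus_{\sigma\co [n]\twoheadrightarrow [k]} N_k(A),$$
which expresses each simplex of $A$ uniquely as the degenerate image of a normalized class. This identity, proved by downward induction on $n-k$ using the splitting $C(A) = N(A) \oplus D(A)$ and the structure of iterated degeneracies, is the principal technical point of the proof: once established it makes both unit and counit visibly isomorphisms, so that $N$ and $K$ are mutually inverse equivalences. I expect this step to be the main obstacle, since it requires verifying both existence and canonicity of the decomposition in an arbitrary abelian category $\mathcal{A}$, via careful use of the epi-mono factorizations in $\Delta$ together with the splitting $C(A)=N(A)\oplus D(A)$.

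Finally, the homotopy correspondence follows by transporting a cylinder across the equivalence. A simplicial homotopy between $f,g\co A\to B$ is a family of morphisms $h_i\co A_n \to B_{n+1}$ satisfying the standard prism identities; a direct check shows that the alternating sum $\sum_i(-1)^i h_i$ restricted to $N(A)$ produces a chain homotopy between $N(f)$ and $N(g)$. The converse is obtained by applying $K$ to a given chain homotopy and using the Eilenberg--Zilber decomposition to reassemble it into the required prism data. Both directions are formal once the decomposition above is in hand, and naturality in $A$ and $B$ follows by inspection.
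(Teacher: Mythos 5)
The paper does not actually prove this statement: the Dold--Kan theorem is quoted as a classical input (it is the ``starting point'' of \fullref{topo}), so there is no in-text argument to compare yours against. Judged on its own, your outline is the standard proof (Dold's original argument, as presented for instance in Weibel, Section 8.4) and is essentially correct. You identify the right key lemma --- the decomposition $A_n\cong\bigoplus_{\sigma\co [n]\twoheadrightarrow[k]}N_k(A)$ indexed by surjections in $\Delta$ --- and the right definition of $K$ via epi--mono factorization, and you correctly flag that establishing this decomposition (existence and uniqueness, by downward induction on $n-k$) is where the real work lies. Two minor remarks. First, the alternating sum $\sum_i(-1)^ih_i$ of the prism data gives a chain homotopy on the unnormalized Moore complexes; to get one on $N$ you must compose with the projection $C(B)\to N(B)$ coming from the splitting $C=N\oplus D$ (or work in the quotient $C/D$), since the prism operator need not carry $N(A)$ into $N(B)$. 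This is routine but should be said. Second, the hypothesis ``enough injectives'' in the statement is superfluous --- your argument, correctly, never uses it; the equivalence holds for an arbitrary abelian (indeed idempotent-complete additive) category, which is consistent with the fact that your constructions are purely combinatorial.
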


Consider the case of $\mathcal{A}$ to be the category of abelian
groups. Then we have a chain of adjunctions 
\begin{equation}\label{agg}
\xymatrix{\Ch_{\geq 0}(\mathcal{A}) \ar@<.5ex>[r]^{K} & 
\Delta^{\op}(\mathcal{A})\ar@<.5ex>[l]^{N}\ar@<.5ex>[r]^{\text {forget}}
& \Delta^{\op}(\Sets)\ar@<.5ex>[l]^{\mathbb{Z}[-]}\ar@<.5ex>[r]^{\hskip
  15pt|-|}& \Top\ar@<.5ex>[l]^{\hskip 15pt \Sing(-)}}
\end{equation}
where
$\mathbb{Z}[\mathcal{X}]$ is the free simplicial abelian group  
generated by the simplicial set $\mathcal{X}$, $|\mathcal{X}|$ is the
topological realization of $\mathcal{X}$, $\Sing(X)$ is the simplicial
set $\Hom_{\Top}(\Delta_{\top}^*, X)$ and $\Delta_{\top}^*$ is the
cosimplicial topological space
$$\Bigl\{(t_0,t_1,\ldots, t_n)/0\leq t_i\leq 1, \sum t_i=1\Bigr\}$$
Each of the categories appearing in the diagram admits localizing
structures which are preserved by the functors. In $\Ch_{\geq
  0}(\mathcal{A})$ the class of morphisms that is being inverted is
the quasi isomorphisms, in the simplicial homotopy categories are the
simplicial homotopy equivalences. This lead us to the first
result:
\begin{proposition}\label{primo}
The chain of adjunctions of diagram \eqref{agg} induce  bijection of
sets in the relevant localized categories
\begin{equation}
\Hom_{\mathcal{D}_{\geq 0}(\Ab)}(N\mathbb{Z}[\mathcal{X}], D_*)\cong
\Hom_{\mathcal{H}(\Delta^{\op}(\Sets))} (\mathcal{X}, K(D_*))
\end{equation}
for any simplicial set $\mathcal{X}$ and complex of abelian groups 
$D_*$.
\end{proposition}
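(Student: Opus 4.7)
The plan is to start from the chain of three adjunctions displayed in \eqref{agg}, compose them to obtain the unlocalized natural bijection
$$\Hom_{\Ch_{\geq 0}(\Ab)}(N\mathbb{Z}[\mathcal{X}], D_*) \cong \Hom_{\Delta^{\op}(\Sets)}(\mathcal{X}, \text{forget}\,K(D_*)),$$
and then to descend this bijection to the appropriate localizations on both sides.

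Two observations make the descent step essentially automatic. On the chain complex side, the source $N\mathbb{Z}[\mathcal{X}]$ is a non-negatively graded complex of free (hence projective) abelian groups, so $\Hom_{\mathcal{D}_{\geq 0}(\Ab)}(N\mathbb{Z}[\mathcal{X}], D_*)$ is computed as the set of chain homotopy classes of chain maps $N\mathbb{Z}[\mathcal{X}] \to D_*$. On the simplicial side, the target $K(D_*)$ is a simplicial abelian group, hence automatically a Kan complex, so $\Hom_{\mathcal{H}(\Delta^{\op}(\Sets))}(\mathcal{X}, K(D_*))$ is computed as the set of simplicial homotopy classes of maps $\mathcal{X} \to \text{forget}\,K(D_*)$.

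It therefore suffices to check that the unlocalized adjunction bijection intertwines these two equivalence relations, and this decomposes along the chain. The equivalence of categories in \fullref{Dold--Kan} provides one half directly, asserting that chain homotopy classes of maps $N\mathbb{Z}[\mathcal{X}] \to D_*$ correspond bijectively to simplicial homotopy classes of maps $\mathbb{Z}[\mathcal{X}] \to K(D_*)$ in $\Delta^{\op}(\Ab)$. For the second half, one has to verify that the free--forget adjunction $\mathbb{Z}[-] \dashv \text{forget}$ preserves and reflects simplicial homotopies: a simplicial homotopy $\mathcal{X} \times \Delta^1 \to \text{forget}\,K(D_*)$ transposes under the adjunction to a map $\mathbb{Z}[\mathcal{X} \times \Delta^1] \to K(D_*)$, and precomposing with the Eilenberg--Zilber shuffle map $\mathbb{Z}[\mathcal{X}] \otimes \mathbb{Z}[\Delta^1] \to \mathbb{Z}[\mathcal{X} \times \Delta^1]$ produces the desired simplicial homotopy in $\Delta^{\op}(\Ab)$ between the transposed maps $\mathbb{Z}[\mathcal{X}] \to K(D_*)$; the reverse direction is obtained by applying the forgetful functor.

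The main obstacle is precisely this last compatibility, since it is the only place in the argument where a nontrivial tool (the Eilenberg--Zilber shuffle map) has to be invoked; everything else is a direct consequence of \fullref{Dold--Kan} together with the standard fact that a bounded-below complex of projectives computes its Homs in the derived category on the nose, with no need for a resolution of the target.
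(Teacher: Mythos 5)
Your argument is correct, and it fills in details that the paper itself leaves implicit: the paper states \fullref{primo} without proof, merely observing beforehand that the functors in diagram \eqref{agg} preserve the localizing structures. Your decomposition — identify $\Hom_{\mathcal{D}_{\geq 0}(\Ab)}(N\mathbb{Z}[\mathcal{X}],D_*)$ with chain homotopy classes because $N\mathbb{Z}[\mathcal{X}]$ is a bounded-below complex of free abelian groups, identify $\Hom_{\mathcal{H}(\Delta^{\op}\Sets)}(\mathcal{X},K(D_*))$ with simplicial homotopy classes because $K(D_*)$ is a simplicial abelian group and hence Kan, and then match the two homotopy relations across \fullref{Dold--Kan} and the free--forgetful adjunction — is exactly the intended route. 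One small simplification: no Eilenberg--Zilber shuffle map is needed at the stage you invoke it, since $\mathbb{Z}[\mathcal{X}\times\Delta^1]$ is already canonically isomorphic to the degreewise tensor product $\mathbb{Z}[\mathcal{X}]\otimes\mathbb{Z}[\Delta^1]$ of simplicial abelian groups (the free abelian group on a product of sets is the tensor product of the free abelian groups); the Eilenberg--Zilber/shuffle machinery only enters in translating this simplicial cylinder into the chain-level cylinder, and that translation is precisely the clause of \fullref{Dold--Kan} asserting that simplicial homotopies correspond to chain homotopies, which you are entitled to quote.
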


Since we will need to take ``shifted'' complexes we have to enlarge
the category $\Ch_{\geq 0}(\Ab)$: let $\Ch_+(\Ab)$ be the category of 
  chain (ie with differential of degree $-1$) complexes of 
abelian groups which are bounded below, that is
for any $C_*\in \Ch_+(\Ab)$, there exists an integer $n_C\in\mathbb{Z}$
such that $C_i=0$ for all $i<n_C$. We relate these two categories of
chain complexes by means of a pair of adjoint functors
\begin{equation}
\xymatrix{\Ch_{\geq 0}(\Ab)\ar@<.5ex>[r]^{\text{forget}} &\Ch_+(\Ab)
\ar@<.5ex>[l]^{\text{truncation}}}
\end{equation}
where the truncation is the functor which sends a complex 
\begin{equation}
\{\cdots \to C_i\stackrel{d_i}{\to} C_{i-1}\to\cdots\to C_1\stackrel{d_1}{\to}
C_0\stackrel{d_0}{\to}\cdots\}
\end{equation}
to the complex 
\begin{equation}
\{\cdots \to C_i\stackrel{d_i}{\to} C_{i-1}\to\cdots\to C_1\stackrel{d_1}
{\to}\ker d_0\}
\end{equation} 
Notice that this functor factors through the derived
categories. Given a complex $D_*$ and an integer $n$, let $D[n]_*$ be
the complex such that $D[n]_i=D_{i-n}$. \fullref{primo}
implies
\begin{proposition}\label{secondo}
The adjoint functors of diagram \eqref{agg} induce a bijection of sets  
\begin{equation}\label{uno}
\Hom_{\mathcal{D}_+(\Ab)}(N\mathbb{Z}[\mathcal{X}], D[n]_*)\cong
\Hom_{\mathcal{H}(\Delta^{\op}(\Sets))}(\mathcal{X}, K(\mathrm{trunc}(D[n]_*))
\end{equation}
for any integer $n$, simplicial set $\mathcal{X}$ and bounded below
complex of abelian groups $D_*$. 
\end{proposition}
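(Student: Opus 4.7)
The plan is to reduce Proposition \ref{secondo} to Proposition \ref{primo} by first moving $D[n]_*$ back into $\Ch_{\geq 0}(\Ab)$ via the truncation functor. Since $N\mathbb{Z}[\mathcal{X}]$ is concentrated in non-negative degrees, any chain map $N\mathbb{Z}[\mathcal{X}]\to D[n]_*$ automatically factors through $\mathrm{trunc}(D[n]_*)$: its components in negative degrees vanish for lack of source, and the component in degree zero must land in $\ker d_0$ for lack of target in degree $-1$. This is precisely the forget/truncation adjunction at the chain level,
$$\Hom_{\Ch_+(\Ab)}(A,B)\cong\Hom_{\Ch_{\geq 0}(\Ab)}(A,\mathrm{trunc}(B))$$
for $A\in\Ch_{\geq 0}(\Ab)$, so composing it with Proposition \ref{primo} applied to $\mathcal{X}$ and the complex $\mathrm{trunc}(D[n]_*)\in\Ch_{\geq 0}(\Ab)$ should immediately give \eqref{uno}.

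First I would verify that this adjunction descends to the derived categories. Both functors preserve quasi-isomorphisms: the forget functor trivially, and the truncation because for a bounded below quasi-isomorphism $f\colon B\to B'$ one has $H_i(\mathrm{trunc}(f))=H_i(f)$ for all $i\geq 0$ while both sides vanish for $i<0$ (using that $H_0(\mathrm{trunc}(B))=\ker d_0^B/\mathrm{im}\,d_1^B=H_0(B)$). By the universal property of localization the pair then descends to an adjoint pair $\mathcal{D}_{\geq 0}(\Ab)\rightleftarrows \mathcal{D}_+(\Ab)$, and taking $A=N\mathbb{Z}[\mathcal{X}]$ and $B=D[n]_*$ yields the identification
$$\Hom_{\mathcal{D}_+(\Ab)}(N\mathbb{Z}[\mathcal{X}],D[n]_*)\cong \Hom_{\mathcal{D}_{\geq 0}(\Ab)}(N\mathbb{Z}[\mathcal{X}],\mathrm{trunc}(D[n]_*)).$$

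The obstacle I expect is more psychological than computational: since derived $\Hom$ is not naively chain-level $\Hom$ modulo homotopy (it a priori requires K-injective resolutions of the target), one has to be convinced that the formal adjunction really captures the derived-level identification. The cleanest way to dispel this is the general principle that an adjunction between relative categories both of whose functors preserve weak equivalences automatically induces an adjunction on localizations: the unit and counit are represented by honest chain maps, so they descend to natural transformations in the localized categories and continue to satisfy the triangle identities there. Once this is in place, \eqref{uno} is just the composition of the truncation adjunction in $\mathcal{D}$ with the bijection provided by Proposition \ref{primo}.
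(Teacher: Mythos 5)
Your proposal is correct and follows the same route as the paper: the paper introduces the forget/truncation adjunction between $\Ch_{\geq 0}(\Ab)$ and $\Ch_+(\Ab)$ immediately before the proposition, notes that truncation factors through the derived categories, and deduces the statement from \fullref{primo}. You have simply made explicit the details the paper leaves implicit (the factorization of chain maps out of a non-negatively graded complex through $\ker d_0$, and the descent of the adjunction to the localizations because both functors preserve quasi-isomorphisms), and those details are right.
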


We wish to understand better the morphisms in the derived category in
order to use the equality \eqref{uno}. Recall that we want to show
that the functor {\it simplicial cohomology} of topological spaces 
is representable in the unstable homotopy category
$\mathcal{H}$. Quillen showed in \cite{quil} that this
category is equivalent to $\mathcal{H}(\Delta^{\op}(\Sets))$, thus
representability of singular cohomology in $\mathcal{H}$
amounts to the group isomorphism 
\begin{equation}
H^n_{\sing}(X,A)\cong 
\Hom_{\mathcal{D}_+(\Ab)}(N\mathbb{Z}[\Sing(X)], D_A[n]_*)
\end{equation}
for some bounded below complex of abelian groups $D_A$ and 
any CW-complex $X$.
\begin{definition}
Given a topological space $X$ and an abelian group $A$ , we define:
\begin{enumerate}
\item the $n$th singular homology group $H^{\sing}_n(X,A)$ to be the
  $n$th homology of the chain complex $N\mathbb{Z}[\Sing(X)]$;
\item the $n$th singular cohomology group $H_{\sing}^n(X,A)$, to be
  the $n$th homology of the cochain complex
  $\Hom_{\Ab}(N\mathbb{Z}[\Sing(X)],A)$.
\end{enumerate}
\end{definition}
In conclusion, we are reduced to compare the homology of the cochain complex
$\{\Hom_{\Ab}(N\mathbb{Z}[\Sing(X)],A)\}$ with the group 
$\Hom_{\mathcal{D}_+(\Ab)}(N\mathbb{Z}[\Sing(X)], D_A[n]_*)$ for some
appropriate complex $D_A[n]_*$. Morphisms in the derived category are
described by this important result

\begin{theorem}\label{der}
Let $\mathcal{A}$ be an abelian category with enough projective
(respectively injective) objects. Moreover, let $C_*$ and $D_*$ be
chain complexes, 
$P_{*}\to C_*$ and $D_*\to I_{*}$ quasi isomorphisms, $P_*$ being
projective objects and $I_*$ injective objects for all $*$.
Then 
\begin{equation}
\begin{split}
\Hom_{\mathcal{D}_+(\mathcal{A})}(C,D[n])&=H^n(\Tot^*(\Hom_{\mathcal{A}}
(P,D)))\\
(\text{resp.}&=H^{-n}(\Tot^*(\Hom_{\mathcal{A}}(C,I))))
\end{split}
\end{equation}
where $\Tot^*$ is the total complex of the cochain bicomplex
$\Hom_{\mathcal{A}}(P_{*},D_*)$
(resp. $\Hom_{\mathcal{A}}(C_*,I^{-*})$) and $H^*$ denotes the homology
of the cochain complex.   
\end{theorem}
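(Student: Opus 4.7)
The plan is to route the computation through the chain homotopy category $K_+(\mathcal{A})$ in two independent steps: first identify the right-hand side as a Hom group in $K_+(\mathcal{A})$ by pure bookkeeping, then use the projective (resp.\ injective) resolution together with a strictification lemma to replace $\Hom_{K_+}$ by $\Hom_{\mathcal{D}_+}$.

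For the first step I would just unwind the definition of the Hom bicomplex. A degree-$n$ cocycle of $\Tot^*\Hom_{\mathcal{A}}(P,D)$ is exactly a chain map $P\to D[n]$, and a coboundary is exactly a chain null-homotopy, so
$$
H^n\bigl(\Tot^*\Hom_{\mathcal{A}}(P,D)\bigr) = \Hom_{K_+(\mathcal{A})}(P,D[n]).
$$
The dual computation, with $C$ in the first slot and $I^{-*}$ in the second, produces the $H^{-n}$ version; the sign flip simply reflects the change from chain to cochain indexing forced by the injective resolution. The theorem then reduces to the assertion that when $P$ is a bounded-below complex of projectives, or dually when $I$ is a bounded-below complex of injectives, the localization map
$$
\Hom_{K_+(\mathcal{A})}(P,D[n]) \longrightarrow \Hom_{\mathcal{D}_+(\mathcal{A})}(P,D[n])
$$
is a bijection. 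Combined with the fact that the quasi-isomorphisms $P\to C$ and $D\to I$ become isomorphisms in $\mathcal{D}_+(\mathcal{A})$, this gives both claimed formulas.

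The step I expect to be the main obstacle is this localization bijection, which rests on the following strictification lemma: every quasi-isomorphism $s\colon P'\to P$ targeting a bounded-below complex of projectives admits a chain-homotopy section $t\colon P\to P'$ with $s\circ t\simeq \id_P$. I would prove this by a degree-by-degree induction, using acyclicity of the mapping cone of $s$ together with projectivity of each $P_i$ to lift the obstruction cocycles at each stage through the surjections coming from acyclicity; the bounded-below hypothesis provides the base case. Granted this lemma, any roof $P\xleftarrow{s}P'\xrightarrow{\phi}D[n]$ representing a morphism in $\mathcal{D}_+(\mathcal{A})$ becomes equivalent to the honest chain map $\phi\circ t$, giving surjectivity of the localization map; applying the same lemma to a roof between two chain maps gives injectivity. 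The dual assertion, needed for the injective resolution, is proved by the symmetric induction using injectivity of each $I_i$ to extend retractions along quasi-isomorphisms out of $I$. Once the strictification is in hand, the rest is routine bookkeeping about roofs and signs.
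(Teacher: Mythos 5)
Your proposal is correct and follows essentially the same route as the paper, which itself only sketches the argument and defers to Weibel (Section 10.7): both pass through $\Hom_{\mathcal{K}_+(\mathcal{A})}$ via the calculus of fractions and use the fact that a bounded-below complex of projectives (dually, injectives) makes the localization map from the homotopy category to the derived category a bijection. Your strictification lemma (homotopy sections of quasi-isomorphisms onto $P$, obtained from the acyclic cone and degreewise projectivity) is exactly the standard mechanism underlying the cofinality claim implicit in the paper's sketch.
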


\fullref{der} is rather classical and we refer to
Weibel \cite[Section~10.7]{weibel} for its proof. It is based on the
fact that, via calculus of fractions, one shows that
$$\Hom_{\mathcal{D}_+(\mathcal{A})}(C_*,D[n]_*)=\lim_{B_*\stackrel{\text{q.iso}}
{\to} C_*}\Hom_{\mathcal{K}_+(\mathcal{A})}(B_*,D[n]_*)$$ 
where  $\mathcal{K}_+(\mathcal{A})$ is the localization of $\Ch_+(\mathcal{A})$
with respect to the chain equivalences of complexes and 
$\Hom_{\mathcal{K}_+(\mathcal{A})}(B_*,D[n]_*)$ is the quotient set of 
$$\Hom_{\Ch_+(\mathcal{A})}(B_*,D[n]_*)$$ modulo chain
equivalences. According to the definitions given, applying \fullref{der} to $C_*=N\mathbb{Z}[\Sing(X)]$ and $D_*=A[i]$ (here the
abelian group $A$ is seen as complex concentrated in degree zero), we 
conclude that 
\begin{equation}
\Hom_{\mathcal{D}_+(\Ab)}(N\mathbb{Z}[\Sing(X)], A[i])=H^i_{\sing}(X,A)
\end{equation}
and, in view of \fullref{secondo}, we conclude
\begin{theorem}\label{rappr}
The homotopy class of $K(\mathrm{trunc}\,A[i])$ represents singular 
cohomology in the
sense that, for any topological space $X$ and abelian group $A$,
\begin{equation}
\Hom_{\mathcal{H}(\Delta^{\op}\Sets)}(\Sing(X),K(\mathrm{trunc}\,A[i]))=
H^i_{\sing}(X,A)
\end{equation}
or equivalently,
\begin{equation}
\Hom_{\mathcal{H}}(X,|K(\mathrm{trunc}\,A[i])|)=H^i_{\sing}(X,A)
\end{equation}
\end{theorem}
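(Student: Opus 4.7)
The plan is to chain together the three main ingredients assembled in this section: Proposition \ref{secondo}, Theorem \ref{der}, and the definition of singular cohomology, and then transfer the result from simplicial sets to spaces via Quillen's equivalence.

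First I would specialize Proposition \ref{secondo} by taking $\mathcal{X}=\Sing(X)$ and $D_*=A$, viewed as a bounded below chain complex concentrated in degree zero. This yields immediately the bijection
\begin{equation}
\Hom_{\mathcal{D}_+(\Ab)}(N\mathbb{Z}[\Sing(X)], A[i]_*)\cong \Hom_{\mathcal{H}(\Delta^{\op}\Sets)}(\Sing(X), K(\mathrm{trunc}\,A[i]_*)),
\end{equation}
so the only real task is to identify the left-hand side with $H^i_{\sing}(X,A)$.

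Next I would apply Theorem \ref{der} to the pair $C_*=N\mathbb{Z}[\Sing(X)]$ and $D_*=A[i]$. The crucial observation is that $N\mathbb{Z}[\Sing(X)]$ is already termwise a free abelian group, since $\mathbb{Z}[\Sing(X)]_n$ is the free abelian group on the $n$--simplices of $\Sing(X)$ and $N$ is a direct summand of this via the Dold--Kan normalization. Hence $N\mathbb{Z}[\Sing(X)]$ is itself a complex of projective objects and may serve as its own projective resolution $P_*\to C_*$. The total complex $\Tot^*(\Hom_{\Ab}(P_*,A[i]))$ therefore collapses to the cochain complex $\Hom_{\Ab}(N\mathbb{Z}[\Sing(X)],A)$ shifted by $i$, and Theorem \ref{der} identifies its $i$th cohomology with $\Hom_{\mathcal{D}_+(\Ab)}(N\mathbb{Z}[\Sing(X)],A[i])$. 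By the very definition of singular cohomology recalled just above, that cohomology group is $H^i_{\sing}(X,A)$.

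Combining the two identifications gives the first stated formula. For the second, I would invoke Quillen's theorem \cite{quil} that geometric realization and singular complex induce an equivalence $|{-}|\co \mathcal{H}(\Delta^{\op}\Sets)\rightleftarrows \mathcal{H}\co \Sing$, and use the adjunction together with the fact that $\Sing$ sends weak equivalences to weak equivalences. The main thing to be careful about is bookkeeping: the shift conventions (chain versus cochain, and the truncation needed to stay in $\Ch_{\geq 0}$), and the check that passing to the truncation does not affect the $\Hom$ out of a complex concentrated in nonnegative degrees. Once the indexing is straightened out, the argument is essentially formal; the substantive content is entirely packaged in Proposition \ref{secondo} and Theorem \ref{der}.
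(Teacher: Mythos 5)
Your proposal is correct and follows essentially the same route as the paper: Theorem \ref{der} applied to $C_*=N\mathbb{Z}[\Sing(X)]$ and $D_*=A[i]$ identifies the derived-category Hom with $H^i_{\sing}(X,A)$, Proposition \ref{secondo} transfers this to $\mathcal{H}(\Delta^{\op}\Sets)$, and Quillen's equivalence gives the second formula. Your added observation that $N\mathbb{Z}[\Sing(X)]$ is termwise free and hence serves as its own projective resolution is exactly the point the paper leaves implicit in the phrase ``according to the definitions given.''
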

This represents the reasoning we wish to reproduce in the algebraic
setting.

 
\section{Motivic cohomology and its representability}\label{puggia}

\subsection{Sites and sheaves}
\label{sub:sites-sheaves}

There are two main issues we should consider: the first is finding the
category playing the role of $\mathcal{D}_+(\Ab)$ and the second is the
category replacing $\mathcal{H}$. The former question is dictated
by the definition we wish to give to {\it motivic cohomology groups}
$H^{i,j}(X, A)$ with coefficients in an abelian group $A$ of a smooth 
variety over a field $k$. If we set them to be right
hyperderived functors of $\Hom_{\mathcal{A}}(-,D_*)$ for some object
$D_*\in \Ch_+(\mathcal{A})$ and some abelian category $\mathcal{A}$
with enough projective or injective objects,
then \fullref{der} automatically tells us that motivic cohomology
is representable in the derived category $\mathcal{D}_+(\mathcal{A})$
and we are well poised for attempting to repeat the topological
argument of the previous section, with $\mathcal{D}_+(\mathcal{A})$ in
place of $\mathcal{D}_+(\Ab)$. Up to a decade or so ago the best
approximation of what we wished to be motivic cohomology was given by the
Bloch's {\it higher Chow groups} \cite{bloch} defined as the homology
of a certain complex. Most of Voevodsky's work in \cite{libro} is
devoted to prove that, if $k$ admits resolution of singularities,
for a smooth variety $X$, these groups are canonically isomorphic to 
$\mathbb{H}_{\Nis}^i(X,A(j))$: the 
Nisnevich (or even Zariski) hypercohomology of $X$ with coefficients 
in a certain complex of
sheaves of abelian groups $A(j)$. This makes possible to apply 
the previous remark since hypercohomology with coefficients in a
complex of sheaves $D_*$ is defined as right derived
functors of the global sections and
$\Hom_{\Ab-\Shv(\Sm/k)}(\mathbb{Z}\Hom_{\Sm/k}(-, X), D_*)$ are precisely
the global sections of $D_*$, by the Yoneda Lemma. Thus, we can take the
abelian category $\mathcal{A}$ to be $\mathcal{N}_k$, the
category of sheaves of abelian groups for the Nisnevich topology over
the site of smooth algebraic varieties over $k$, and $D_*$ to be
$A(j)$. Therefore, we define

\begin{definition}\label{motcoho}
The $(i,j)$th motivic cohomology group of a smooth algebraic variety 
$X$ is $\mathbb{H}_{\Nis}^i(X,A(j))$, where the complex of sheaves of
abelian groups $A(j)$ will be defined below. Such groups will be
denoted in short by $H^{i,j}(X,A)$.
\end{definition}

The quest for the algebraic counterpart of the category
$\mathcal{H}$ begins with the question on what are the ``topological
spaces'' or more precisely what should we take as
the category $\Delta^{\op}(\Sets)$ in the algebraic setting.
The first candidate is clearly $\Sch/k$, the category of schemes of
finite type over a field $k$. This category has all (finite) limits
but, the finiteness type condition on the objects of $\Sch/k$
prevents this category from being closed under (finite) colimits and 
this is too strong of a limitation on a category for trying to do 
homotopy theory on. The less painful way to solve this problem it
seems to be to
embed $\Sch/k$ in $\Funct((\Sch/k)^{\op}, \Sets)$, the category of
contravariant functors from  $\Sch/k$ to $\Sets$, via the Yoneda 
embedding $\mathbf{Y}\co X\to\Hom_{\Sch/k}(-, X)$. The category  
$\Funct((\Sch/k)^{\op}, \Sets)$ has all limits and colimits induced by
the ones of $\Sets$ and the functor $\mathbf{Y}$ has some good properties like
being faithfully full, because of Yoneda Lemma. However, although it
preserves limits, it does not preserve existing colimits in
$\Sch/k$. Thus, the entire category $\Funct((\Sch/k)^{\op}, \Sets)$ does
not reflect enough existing structures of the original category
$\Sch/k$. It turns out that smaller categories are more suited
for this purpose and the question of which colimits we wish to be
preserved by $\mathbf{Y}$ is related to {\it sheaf theory}. 
Let us recall some basic definitions of this theory.
\begin{definition}
Let $\mathcal{C}$ be a category such that for 
each object $U$ there exists a set of maps $\{U_i\to U\}_{i\in I}$, 
called a {\em covering}, satisfying the following axioms: 
\begin{enumerate}
\item for any $U\in\mathcal{C}$, $\{U\stackrel{\id}{\to} U\}$ is a
  covering of $U$; 
\item for any covering $\{U_i\to U\}_{i\in I}$ and any morphism
  $V\to U$ in $\mathcal{C}$, the fibre products $U_i\times_U V$ exist
  and $\{U_i\times_U V\to V\}_{i\in I}$ is a covering of $V$;
\item if $\{U_i\to U\}_{i\in I}$ is a covering of $U$, and if for each
  $i$, $\{V_{ij}\to U_i\}_{j\in J_i}$ is a covering of $U_i$, then the
  family $\{V_{ij}\to U\}_{i,j}$ is a covering of $U$.
\end{enumerate}
Then the datum $T$ of the covering is called a {\em Grothendieck topology}
and  the pair $(\mathcal{C}, T)$ is a {\em site}.
\end{definition}

\begin{examples}\label{esempi}
\begin{enumerate}
\item If $X$ is a topological space, $\mathcal{C}$ is the category
  whose objects are the open subsets of $X$, and morphisms are the
  inclusions, then the family $\{U_i\to U\}_{i\in I}$ for which
  $\amalg_iU_i\to U$ is surjective and $U$ has the quotient topology is an open
  covering of $U$. This is a Grothendieck topology on $\mathcal{C}$.
  In particular, if the topology of $X$ is the one of
  Zariski, we denote by  $X_{\Zar}$ the site associated to it.
\item Let $X$ be a scheme and $\mathcal{C}$ be the category whose
  objects are \'etale (ie locally of finite type, flat and 
  unramified) morphisms $U\to X$
  and as arrows the morphisms over $X$. If we let the covering be the
  surjective families of \'etale morphisms $\{U_i\to U\}_i$, then we
  obtain a site denoted with $X_{\et}$.
\item If $X$ is a scheme over a field $k$, and in the previous example
  we replace \'etale coverings with \'etale coverings 
  $\{U_i\to U\}_{i\in I-finite}$ with the property that for 
  each (not necessarily 
  closed point) $u\in U$ there is an $i$ and $u_i\in U_i$ such that
  the induced map on the residue fields $k(u_i)\to k(u)$ is an
  isomorphism, then we get a Grothendieck topology called {\em completely
  decomposed} or {\em Nisnevich}.
\item Let $\mathcal{C}$ be a subcategory of the schemes over a base
  scheme $S$ (eg $\Sm/k$: locally of finite type, smooth schemes over 
  a field $k$). Then Grothendieck
  topologies are obtained by considering coverings of objects of
  $\mathcal{C}$ given by surjective families of open embeddings,
  locally of finite type \'etale
  morphisms, locally of finite type \'etale morphisms with the condition 
  of the previous
  example, locally of finite type flat morphisms, et cetera.
The corresponding
  sites are denoted by $\mathcal{C}_{\Zar}$, $\mathcal{C}_{\et}$, 
$\mathcal{C}_{\Nis}$, $\mathcal{C}_{\fl}$. In the remaining part of this
  manuscript we will mainly deal with the site $(\Sm/k)_{\Nis}$.  
\end{enumerate}
\end{examples}
For the time being all the maps of a covering will be locally of
finite type. We have the following inclusions: 

$$
\hskip-10pt\{\text{Zariski~covers}\}\subset\{\text{Nisnevich~
  covers}\}\subset\{\text{\'etale~covers}\}\subset\{\text{flat~covers}\}
$$
We now return to the colimit preserving properties of the Yoneda
embedding functor $\mathbf{Y}\co \Sch/k\hookrightarrow \Funct((\Sch/k)^{\op}, \Sets)$. 
Taking for granted the notion of limits and
colimits of diagrams in the category $\Sets$, we recall that an 
object $A$ of a small category
$\mathcal{C}$ is a colimit of a diagram $D$ in $\mathcal{C}$ if
$\Hom_{\mathcal{C}}(A, X)=\lim_{\Sets} \Hom_{\mathcal{C}}(D, X)$ for all
$X\in\mathcal{X}$. The
diagrams in $\Sch/k$ whose colimits we are mostly interested in being 
preserved by $\mathbf{Y}$ are 
\begin{equation}\label{dia}
\begin{split}
\xymatrix{V\times_X V\ar[d]\ar[r] & V\\ V &}
\end{split}
\end{equation}
where $V\to X$ is a flat covering for a Grothendieck topology. Such
diagrams admit colimits on $\Sch/k$: they are each isomorphic to $X$. 
This can be
rephrased by saying that the square obtained by adding $X$ and the
canonical maps $V\to X$ on the lower right corner of diagram
\eqref{dia} is cocartesian.
In order for $\mathbf{Y}$ to
preserve such colimits we are going to consider the largest full
subcategories of $\Funct((\Sch/k)^{\op}, \Sets)$ in which $\mathbf{Y}(X)$ is 
the colimit of 
\begin{equation}
\begin{split}
\xymatrix{\mathbf{Y}(V\times_XV)\ar[r]\ar[d] & \mathbf{Y}(V)\\ 
\mathbf{Y}(V) &}
\end{split}
\end{equation}

\begin{definition}
Let $(\Sch/k, T)$ be a site. Then $F$ is a {\em sheaf} on the site
$(\Sch/k, T)$ if 
\begin{equation}
\begin{split}
\Hom_{\Funct((\Sch/k)^{\op},
  \Sets)}\left(\begin{matrix}
\xymatrix{\mathbf{Y}(V\times_XV)\ar[r]\ar[d]^{\mathbf{Y}(T)}  
& \mathbf{Y}(V)\ar[d]\\ \mathbf{Y}(V)\ar[r]^{\mathbf{Y}(T)} 
& \mathbf{Y}(X)}
\end{matrix}
, F\right)
\end{split}
\end{equation}
is a cartesian square in $\Sets$. This property will be
refered to as $F$ making the diagram 

$$\xymatrix{\mathbf{Y}(V\times_XV)\ar[r]\ar[d]^{\mathbf{Y}(T)}  
& \mathbf{Y}(V)\ar[d]\\ \mathbf{Y}(V)\ar[r]^{\mathbf{Y}(T)} 
& \mathbf{Y}(X)}$$
cocartesian in $\Funct((\Sch/k)^{\op}, \Sets)$.
\end{definition}

The full subcategory of $\Funct((\Sch/k)^{\op}, \Sets)$ of sheaves for the
$T$ topology will be denoted as $\Shv(\Sch/k)_T$. 

\begin{remark}
\begin{enumerate}
\item This definition is a rephrasing  of the classical one found in
  algebraic geometry texts. 
\item The Yoneda embedding $\mathbf{Y}$ embeds $\Sch/k$ as a full
  subcategory of $\Shv(\Sch/k)_T$ for any topology $T$ coarser than the
  flat.
\end{enumerate}
\end{remark}

Strange it may seem, the category $\Shv(\Sch/k)_T$ is going to be 
the replacement 
for the category $\Sets$ in topology. This time, though, the category depends 
upon one parameter: the Grothendieck topology $T$. The role played by
such parameter can be deduced by the results below. For the time being,
we will just consider sites with $\Sm/k$ as underlying category of
objects 

\begin{proposition}[Morel--Voevodsky {{\cite[Proposition~1.4, p96]{morvoe}}}]
$F$ is a sheaf for the Nisnevich topology if and only if $F$ makes
  cocartesian $\mathbf{Y}(D)$ where $D$ is the following diagram:
\begin{equation}\label{elementare}
\begin{split}
\xymatrix{U\times_XV\ar[r]\ar[d] &V\ar[d]^p \\
U\ar@{^(->}[r]^i & X}
\end{split}
\end{equation}
where all the schemes are smooth, $p$ is \'etale, $i$ is an open
immersion and
$$p\co p^{-1}(X-i(U))_{\red}\to (X-i(U))_{\red}$$
(ie reduced structure on the closed subschemes) is an isomorphism. Such
squares are called {\em elementary squares}.
\end{proposition}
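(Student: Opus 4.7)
The plan is to prove both implications, with the forward direction extracting the cartesian-square condition from the Nisnevich sheaf axiom applied to the covering $\{U\to X,\,V\to X\}$, and the reverse direction reducing the general Nisnevich sheaf condition to Zariski covers together with elementary squares via a Noetherian induction on the base.

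For the forward direction, I would first observe that $\{i\co U\to X,\,p\co V\to X\}$ is itself a Nisnevich covering of $X$: any point $x\in i(U)$ lifts through $i$ with trivial residue extension, and any $x\in X-i(U)$ lifts uniquely through $p$ with trivial residue extension by the hypothesis that $p$ restricts to an isomorphism on the reduced complement. The sheaf axiom for this two-element cover asserts that $F(X)\to F(U)\times F(V)$ equalizes the two maps into the products $F(U_\alpha\times_X U_\beta)$. Since $i$ is a monomorphism, $U\times_X U=U$ and that factor is trivial; the only nontrivial conditions are (A) agreement of sections on $U\times_X V$, and (B) agreement of the two pullbacks of a given $s_V\in F(V)$ to $V\times_X V$. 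The cartesian-square condition on the elementary square is precisely (A), so the task is to show that (A) implies (B). Here the rigid structure of the elementary square is crucial: because $p$ is \'etale and separated, the diagonal is both open and closed, giving $V\times_X V=V\sqcup W$; and because $p$ is an isomorphism outside $i(U)$, the piece $W$ lies over $U$, with both projections $W\to V$ factoring through $p^{-1}(U)=U\times_X V$ and inducing the same composite $W\to U\times_X V\to U$ (since $p(v_1)=p(v_2)$ for any $(v_1,v_2)\in W$). Hence agreement on $U\times_X V$ propagates automatically to agreement on $V\times_X V$.

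For the reverse direction, I would first verify that $F$ is a Zariski sheaf by noting that a two-element open cover $X=U_1\cup U_2$ fits into an elementary square with $U=U_1$, $V=U_2$ and $i,p$ both open immersions: the condition that $p$ restricts to an isomorphism on $p^{-1}(X-i(U))$ reduces to $U_2\supseteq X-U_1$, which holds by hypothesis. Induction on the number of opens in a finite Zariski cover then yields the full Zariski sheaf axiom. The remaining and more delicate step is to upgrade to the Nisnevich sheaf condition. Given a Nisnevich covering $\{U_j\to X\}$, I would use Noetherian induction on a stratification of $X$: the Nisnevich residue-field condition at generic points provides a dense open $U\subseteq X$, an index $j$ and an open $V\subseteq U_j$ such that $V\to X$ is \'etale and restricts to an isomorphism over $U$; after shrinking $V$ if necessary, this is an elementary square. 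The sheaf axiom for the original cover on $X$ then decomposes into the cartesian-square condition for this elementary square (which holds by assumption), the Zariski sheaf condition already established, and the inductive hypothesis applied to the restriction of the covering to the lower-dimensional closed complement $X-U$.

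The main obstacle is the reverse direction, specifically the induction step that produces the elementary square from an arbitrary Nisnevich covering and that verifies the decomposition of the sheaf axiom along the stratification. The forward direction, once one has identified the decomposition $V\times_X V=V\sqcup W$ and observed that $W$ sits over $U$, is essentially a direct bookkeeping made rigid by the elementary square hypothesis.
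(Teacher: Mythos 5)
The paper itself offers no proof of this proposition---it defers entirely to Morel--Voevodsky---so your attempt can only be measured against the standard argument there. Your forward direction is correct and essentially complete: $\{U\to X,\,V\to X\}$ is a Nisnevich cover, $U\times_XU=U$ because $i$ is a monomorphism, and the decomposition $V\times_XV=\Delta(V)\sqcup W$ (the diagonal is clopen since $p$ is \'etale and separated) together with the observation that $W$ lies over $i(U)$---over a point of $(X-i(U))_{\mathrm{red}}$ the fibre of $p$ is a single reduced point, so its self-product is purely diagonal---correctly reduces the full equalizer condition for the cover to the fibre-product condition of the elementary square.

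The reverse direction contains a genuine gap. You propose to apply ``the inductive hypothesis to the restriction of the covering to the lower-dimensional closed complement $X-U$.'' But $X-U$ is a closed subset of a smooth scheme and is not smooth in general, so it is not an object of $\Sm/k$: the presheaf $F$ is not defined on it, and the restricted covering is not one to which any inductive hypothesis about $F$ can apply. The correct organisation (and the one in Morel--Voevodsky) inducts not on the base but on the length of a \emph{splitting sequence} $\emptyset=Z_{n+1}\subseteq Z_n\subseteq\cdots\subseteq Z_0=X$ of closed subsets over whose successive locally closed differences the cover admits sections. The deepest stratum $Z_n$ then yields an open $V$ inside one member of the cover with $V\times_X(Z_n)_{\mathrm{red}}\to(Z_n)_{\mathrm{red}}$ an isomorphism (a section of an unramified map is an open immersion into the fibre, so one may shrink), hence an elementary square $(X-Z_n,\,V)$; the induction closes because the cover pulled back to $X-Z_n$ and to $V$ has a strictly shorter splitting sequence, and $F$ is never evaluated on a non-smooth scheme. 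Two smaller omissions: the statement really needs the normalisation $F(\emptyset)=\ast$, which is not implied by the cartesian-square condition but is required for the empty cover and to extract additivity $F(X\sqcup Y)=F(X)\times F(Y)$ from the square with $U\times_XV=\emptyset$; and the passage from ``descent for a refinement plus descent for the pulled-back covers'' to ``descent for the original cover'' is genuine bookkeeping that you assert rather than prove.
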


\begin{remark}
There are analogous results for the Zariski and \'etale topologies
  according to whether we require $p$ to be an open embedding or $i$
  to be just \'etale, respectively.
\end{remark}
We now drop the $\mathbf{Y}$ from the notation and therefore think of
a scheme $X$ as being $\mathbf{Y}(X)$, the sheaf represented by $X$, as an
object in $\Shv(\Sch/k)_T$. Denote with $\Cof(f)$ the colimit (of course in
the category $\Shv(\Sch/k)_T$) of the diagram
$$
\xymatrix{U\ar[r]^f\ar[d] & X\\
\Spec k & }
$$
and call it the {\em cofiber} of the map $f$. If $f$ is injective, we
will write $X/U$ for such cofiber. By general nonsense, in
a cocartesian square we can identify cofibers of parallel maps. In
particular, in the case of the cocartesian square \eqref{elementare}
we get a canonical isomorphism $V/(U\times_XV)\to X/U$. This implies
that we are able to identify special sheaves in the isomorphism
classes of such objects.
\begin{examples}
\begin{enumerate}
\item Let $Z_1$ and $Z_2$ be two disjoint closed subvarieties of a smooth
  variety $X$. Then,
\begin{equation}
\begin{split}
\xymatrix{X-Z_1-Z_2\ar[r]\ar[d] & X-Z_1\ar[d]\\
X-Z_2\ar[r] & X}
\end{split}
\end{equation}
is an elementary square and the canonical morphism
$$X-Z_1/(X-Z_1-Z_2)\to X/(X-Z_2)$$ is, not surprisingly, an
isomorphism.  
\item Let $L/k$ be a finite, separable field extension  $y\co \Spec L\to
  X$ be an $L$--point, $X$ a smooth variety over $k$ and $X_L$ the base 
change of $X$ over $L$. Then there exist points
  $x_1,x_2\cdots,x_n,x_L$ of $X_L$ with $x_L$ rational (if $L/k$ is
  a Galois extension then all $x_i$ are rational), such that 
\begin{equation}
\begin{split}
\xymatrix{X_L-\{x_1,\ldots,x_n,x_L\} \ar[r]\ar[d] &
  X_L-\{x_1,\ldots,x_n\}\ar[d]^p\\ X-y\ar[r]^i & X}
\end{split}
\end{equation}
is an elementary square. For instance, 
\begin{equation}
\begin{split}
\xymatrix{\mathbb{A}_{\mathbb{C}}^1-\{i,-i\}\ar[r]\ar[d] &
  \mathbb{A}_{\mathbb{C}}^1-\{i\}\ar[d]\\
\mathbb{A}^1_{\mathbb{R}}-\Spec \mathbb{C}\ar[r] &
  \mathbb{A}_{\mathbb{R}}^1} 
\end{split}
\end{equation}
is an elementary square and
$\mathbb{A}_{\mathbb{C}}^1-\{i\}/(\mathbb{A}_{\mathbb{C}}^1-\{i,-i\})\to 
\mathbb{A}_{\mathbb{R}}^1/(\mathbb{A}^1_{\mathbb{R}}-\Spec \mathbb{C})$
is an isomorphism.      
\end{enumerate}
\end{examples}

Combining the two previous examples we prove:
\begin{proposition}\label{pu}
Let $L/k$ be a finite, separable field extension. Assume that a smooth
variety has an $L$--point $y\co \Spec L\to X$. Then, the canonical morphism
$X/(X-y)\to X_L/(X_L-y)$ is an isomorphism of sheaves for a topology
finer or equivalent to the Nisnevich one. 
\end{proposition}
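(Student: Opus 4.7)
The plan is to chain the two elementary square examples that immediately precede the proposition. Let $x_L$ denote the canonical rational point of $X_L$ lying above $y$ (that is, the factorization of $y\co \Spec L\to X$ through $X_L\to X$), and let $x_1,\ldots,x_n$ denote the remaining points of the fiber over $y$, as supplied by the second example.

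First, I would invoke the second example directly: the square
\begin{equation*}
\xymatrix{X_L-\{x_1,\ldots,x_n,x_L\}\ar[r]\ar[d] & X_L-\{x_1,\ldots,x_n\}\ar[d]^{p}\\ X-y\ar[r]^{i} & X}
\end{equation*}
is an elementary square, hence cocartesian in $\Shv(\Sm/k)_T$ for any $T$ finer than or equal to the Nisnevich topology. The general observation stated in the text, that in a cocartesian square the cofibers of parallel maps are canonically identified, then yields an isomorphism
\begin{equation*}
(X_L-\{x_1,\ldots,x_n\})/(X_L-\{x_1,\ldots,x_n,x_L\})\xrightarrow{\ \cong\ } X/(X-y).
\end{equation*}

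Next, I would apply the first example to the smooth variety $X_L$ with the two disjoint closed subvarieties $Z_1=\{x_1,\ldots,x_n\}$ and $Z_2=\{x_L\}$. This produces another elementary square, whose cofiber identification reads
\begin{equation*}
(X_L-Z_1)/(X_L-Z_1-Z_2)\xrightarrow{\ \cong\ } X_L/(X_L-Z_2)=X_L/(X_L-x_L).
\end{equation*}
Composing the two isomorphisms gives the desired identification $X/(X-y)\cong X_L/(X_L-y)$, and one checks that under the identifications this is the canonical map mentioned in the statement.

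The steps are essentially bookkeeping; the only real point requiring attention is verifying that the square in the second example is honestly elementary in our situation. Concretely, one must confirm that $p\co X_L-\{x_1,\ldots,x_n\}\to X$ is \'etale (it is, being the restriction of the \'etale morphism $X_L\to X$ to an open subscheme), that $i\co X-y\hookrightarrow X$ is an open immersion (clear, since $y$ is a closed point), and that $p$ restricts to an isomorphism on the reduced complements — which amounts to the residue field extension $k(y)\to k(x_L)$ being the identity $L\to L$, guaranteed by the choice of $x_L$ as the rational point above $y$. Once this verification is in hand, the conclusion follows immediately from the two cofiber identifications.
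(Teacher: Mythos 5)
Your proof is correct and is exactly the argument the paper intends: the text introduces the proposition with ``Combining the two previous examples we prove,'' and your chaining of the two elementary-square cofiber identifications (together with the verification that the second square is honestly elementary) is precisely that combination.
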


The next results require the introduction of some notation most of the
homotopy theorists are familiar with.
\begin{definition}
\begin{enumerate}
\item A {\em pointed sheaf} is a pair $(F,x)$ of a sheaf $F$ and an
  element $x\in F(\Spec k)$. The symbol $F_+$ will denote the sheaf
  $F\amalg \Spec k$. The cofiber of the unique morphism $\emptyset\to
  X$ is set by convention to be $X_+$. Notice that, 
unlike in topology, there exist sheaves $F$ with the property that
 $F(\Spec k)$ is 
the empty set, thus such $F$ cannot be pointed (unless by adding a
  separate basepoint). This fact is the source of the most interesting
  phenomena in motivic homotopy theory such as the various degree 
  formulae.
\item Let $(A,a)$ and $(B,b)$ two pointed sheaves. Then $A\vee B$ is
  the colimit of 
\begin{equation}
\begin{split}
\xymatrix{\Spec k\ar[r]^a\ar[d]^b & A\\ B &}
\end{split}
\end{equation}
pointed by $a=b$.
\item The pointed sheaf $A\wedge B$ is defined to be $A\times B/A\vee
  B$ pointed by the image of $A\vee B$.
\item Let $V\to X$ be a vector bundle (ie the scheme associated with
  a locally free sheaf of
  $\mathcal{O}_X$ modules). Then the {\em Thom sheaf} of $V$ is
  $V/(V-i(X))$, where $i\co X\to V$ is the zero section of $V$. The Thom
  sheaf will be denoted by $\Th_X(V)$.
\item The sheaf $T$ will always stand for $\aff/(\aff-0)$.
\end{enumerate}
\end{definition} 
The Thom sheaves enjoy of properties similar to the topological
counterparts. 

\begin{proposition}\label{ma}
Let $V_1\to X_1$ and $V_2\to X_2$ be two vector bundles. Then there is
a canonical isomorphism of pointed sheaves 
$$\Th_{X_1\times X_2}(V_1\times V_2)\stackrel{\cong}{\to}
\Th_{X_1}(V_1)\wedge Th_{X_2}(V_2)$$
\end{proposition}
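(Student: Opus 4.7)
The plan is to identify both sides as the same quotient of $V_1\times V_2$. The Thom sheaf on the left is by definition $V_1\times V_2$ modulo the complement of the zero section; the key observation is that the zero section of the product bundle $V_1\times V_2\to X_1\times X_2$ is the product of the two zero sections, so set-theoretically (hence as subsheaves, since we are considering monomorphisms of sheaves represented by open subschemes of the ambient scheme),
\begin{equation*}
V_1\times V_2 - (X_1\times X_2) \;=\; \bigl((V_1-X_1)\times V_2\bigr)\;\cup\;\bigl(V_1\times (V_2-X_2)\bigr).
\end{equation*}

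Next I would establish the general identity that for pointed sheaves of the form $A=X/U$ and $B=Y/V$, the smash product is
\begin{equation*}
A\wedge B \;\cong\; (X\times Y)\big/\bigl((U\times Y)\cup (X\times V)\bigr).
\end{equation*}
This is formal: $\Shv(\Sm/k)_{\Nis}$ is a Grothendieck topos, so binary products are cocontinuous in each variable. Writing $X/U$ as the pushout of $\ast\leftarrow U\to X$, distributing the product across this pushout twice and combining pushouts gives $A\times B$ as an iterated pushout; quotienting by $A\vee B$ — which is exactly the image of $(U\times Y)\cup (X\times V)$ in $A\times B$, where the union is computed as the pushout of $U\times V\to U\times Y$ and $U\times V\to X\times V$ — yields the displayed formula.

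Substituting $X=V_1$, $U=V_1-X_1$, $Y=V_2$, $V=V_2-X_2$ into the smash product formula and invoking the set-theoretic identity of the first paragraph identifies the denominator with $V_1\times V_2-(X_1\times X_2)$, producing the desired isomorphism $\Th_{X_1\times X_2}(V_1\times V_2)\xrightarrow{\cong}\Th_{X_1}(V_1)\wedge\Th_{X_2}(V_2)$. Canonicity follows by tracing the construction: the projections $V_1\times V_2\to A\times B$ descend to the zero-section complement, and this descended map is the inverse of the one produced by the universal property.

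The only genuinely delicate step is the smash-product formula: one must verify that $(U\times Y)\cup(X\times V)$, understood as the pushout of subsheaves along $U\times V$, really maps monomorphically to $X\times Y$ and really agrees with the image of $A\vee B$ in $A\times B$ after sheafification. This is where universality of colimits in the topos $\Shv(\Sm/k)_{\Nis}$ is essential — it ensures that the pullback of the quotient map $X\times Y\to A\wedge B$ along the basepoint computes the subsheaf being collapsed, and that no further sheafification alters the set-theoretic union.
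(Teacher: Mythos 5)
Your argument is correct. Note that the paper states this proposition without proof (it is a standard formal fact, essentially from Morel--Voevodsky), so there is no in-text argument to compare against; your route --- identifying the complement of the zero section of $V_1\times V_2$ with the union $\bigl((V_1-X_1)\times V_2\bigr)\cup\bigl(V_1\times(V_2-X_2)\bigr)$ and invoking the quotient formula $(X/U)\wedge(Y/V)\cong (X\times Y)/\bigl((U\times Y)\cup(X\times V)\bigr)$, valid because colimits in the topos $\Shv(\Sm/k)_{\Nis}$ are universal and representable sheaves satisfy descent for the two-element Zariski cover of the open union --- is exactly the standard one and fills the gap correctly.
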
 

\begin{corollary}\label{cheneso}
Let $\mathbb{A}^n_X:=\mathbb{A}^n_k\times_k X$ be the trivial vector 
bundle of rank $n$ over
$X$. Then there is a canonical isomorphism of pointed sheaves 
$$\Th(\mathbb{A}^n_X)\stackrel{\cong}{\to} T^n\wedge X_+$$
\end{corollary}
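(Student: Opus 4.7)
The plan is to deduce this as a direct consequence of \fullref{ma} by decomposing the trivial bundle $\mathbb{A}^n_X$ as a product of simpler vector bundles whose Thom sheaves we can identify by hand.

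First I would observe two base cases. The trivial rank--one bundle $\mathbb{A}^1_k\to\Spec k$ has Thom sheaf $\Th_{\Spec k}(\mathbb{A}^1_k)=\mathbb{A}^1_k/(\mathbb{A}^1_k-0)$, which is precisely $T$ by the definition of $T$ given above. The rank--zero ``bundle'' $X\to X$ (the identity, viewed as a vector bundle with zero fiber) has zero section equal to the identity, so its Thom sheaf is $X/(X-X)=X/\emptyset$, which by the stated convention that the cofiber of $\emptyset\to X$ is $X_+$ equals $X_+$.

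Next I would write the trivial bundle as an external product
\begin{equation}
\mathbb{A}^n_X=\underbrace{\mathbb{A}^1_k\times\mathbb{A}^1_k\times\cdots\times\mathbb{A}^1_k}_{n}\times_k X,
\end{equation}
viewed as the fiberwise product of the $n$ rank--one bundles $\mathbb{A}^1_k\to\Spec k$ together with the rank--zero bundle $X\to X$. An $n$--fold application of \fullref{ma} (proved by induction from the binary case) then yields a canonical isomorphism of pointed sheaves
\begin{equation}
\Th(\mathbb{A}^n_X)\;\stackrel{\cong}{\to}\;\Th_{\Spec k}(\mathbb{A}^1_k)\wedge\cdots\wedge\Th_{\Spec k}(\mathbb{A}^1_k)\wedge\Th_X(X),
\end{equation}
which by the two base-case identifications equals $T\wedge\cdots\wedge T\wedge X_+=T^n\wedge X_+$.

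I expect no serious obstacle: the only mild subtlety is accommodating the rank--zero factor $X\to X$ inside \fullref{ma} so that the base $X$ of the resulting bundle appears correctly, and dealing with the convention for the empty complement. If one prefers to avoid the rank--zero case, an equivalent route is a direct induction on $n$: apply \fullref{ma} to $\mathbb{A}^n_X=\mathbb{A}^1_k\times_k\mathbb{A}^{n-1}_X$ to get $\Th(\mathbb{A}^n_X)\cong T\wedge\Th(\mathbb{A}^{n-1}_X)$, and then close the induction using the base case $n=0$, where $\Th(\mathbb{A}^0_X)=\Th(X)=X_+$ as above.
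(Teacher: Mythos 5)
Your proposal is correct and follows essentially the same route as the paper: the paper likewise writes $\Th_X(\mathbb{A}^n_X)=\Th_{\Spec k\times_k X}(\mathbb{A}^n_k\times_k\mathbb{A}^0_X)$, applies \fullref{ma} to split off $T^{\wedge n}=\Th(\mathbb{A}^n_k)$, and identifies $\Th_X(\mathbb{A}^0_X)=X_+$. Your treatment is only slightly more explicit about the rank--zero base case and the convention $X/\emptyset=X_+$.
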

\begin{proof}
\fullref{ma} implies that $\Th(\mathbb{A}^n_k)\cong
T^{\wedge n}$ and 
$$\Th_X(\mathbb{A}^n_X)=Th_{\Spec k\times_k
  X}(\mathbb{A}^n_k\times_k\mathbb{A}^0_X)\cong T^{\wedge n}\wedge
\Th_X(\mathbb{A}^0_X)=T^{\wedge n}\wedge X_+\proved$$
\end{proof}

\begin{corollary}\label{purezza1}
Let $L/k$ be a finite and separable field extension and
$y\co \Spec L\to\mathbb{A}_k^n$ be an $L$ point. For any positive
integer $n$ there is an isomorphism of sheaves 
$$\mathbb{A}_k^n/(\mathbb{A}_k^n-y)\cong T^{\wedge n}\wedge
(\Spec L)_+$$ 
if the topology $T$ is finer or equivalent to the Nisnevich one.
\end{corollary}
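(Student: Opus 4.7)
The plan is to reduce the statement to a computation of a Thom sheaf over $\Spec L$ by combining \fullref{pu} with \fullref{cheneso}.

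First, I would apply \fullref{pu} to $X=\mathbb{A}_k^n$: since $\mathbb{A}_k^n$ is smooth and $y\co \Spec L\to \mathbb{A}_k^n$ is an $L$--point, the canonical morphism
\begin{equation*}
\mathbb{A}_k^n/(\mathbb{A}_k^n-y) \longrightarrow (\mathbb{A}_k^n)_L/((\mathbb{A}_k^n)_L-y)
\end{equation*}
is an isomorphism of sheaves for any topology finer than or equivalent to the Nisnevich one. This reduces the problem to identifying the right-hand side. Note that $(\mathbb{A}_k^n)_L=\mathbb{A}_L^n$, and the $L$--point $y$ now corresponds to an $L$--rational point of $\mathbb{A}_L^n$, i.e.\ a section of the structure map $\mathbb{A}_L^n\to \Spec L$.

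Next, I would use the group scheme structure on $\mathbb{A}_L^n$ over $\Spec L$ to translate $y$ to the zero section. Translation by $-y$ is an automorphism of $\mathbb{A}_L^n$ over $\Spec L$ sending $y$ to the origin $i\co \Spec L\to \mathbb{A}_L^n$, hence induces an isomorphism
\begin{equation*}
\mathbb{A}_L^n/(\mathbb{A}_L^n-y)\xrightarrow{\ \cong\ } \mathbb{A}_L^n/(\mathbb{A}_L^n-i(\Spec L))=\Th_{\Spec L}(\mathbb{A}^n_{\Spec L}),
\end{equation*}
where the last equality follows from the definition of the Thom sheaf of the trivial rank $n$ bundle $\mathbb{A}^n_{\Spec L}\to \Spec L$.

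Finally, \fullref{cheneso} applied with $X=\Spec L$ yields a canonical isomorphism $\Th_{\Spec L}(\mathbb{A}^n_{\Spec L})\cong T^{\wedge n}\wedge(\Spec L)_+$. Composing the three isomorphisms produces the desired identification $\mathbb{A}_k^n/(\mathbb{A}_k^n-y)\cong T^{\wedge n}\wedge(\Spec L)_+$. The only nontrivial input is \fullref{pu}; the subsequent steps are formal, and the main point to verify is that translation by $-y$ is indeed a morphism of $L$--schemes, which holds because $\mathbb{A}_L^n$ is an $L$--group scheme and $y$ is an $L$--point.
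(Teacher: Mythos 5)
Your proof is correct and follows essentially the same route as the paper: reduce via \fullref{pu} to $\mathbb{A}_L^n/(\mathbb{A}_L^n-y)$, identify this with the Thom sheaf of the trivial rank $n$ bundle over $\Spec L$, and conclude by \fullref{cheneso}. The only difference is that you spell out the translation-by-$-y$ automorphism moving the rational point to the zero section, a step the paper leaves implicit in the phrase ``since $\mathbb{A}_L^n$ is the trivial rank $n$ vector bundle over $\Spec L$''.
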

 
\begin{proof}
\fullref{pu} gives an isomorphism 
$\mathbb{A}_k^n/(\mathbb{A}_k^n-y)\cong
\mathbb{A}_L^n/(\mathbb{A}_L^n-y)$. Since $\mathbb{A}_L^n$ is the
trivial rank $n$ vector bundle over $\Spec L$, \fullref{cheneso}
gives
$$\mathbb{A}_L^n/(\mathbb{A}_L^n-y)\cong \Th(\mathbb{A}_L^n)\cong
T^{\wedge n}\wedge (\Spec L)_+.\proved$$
\end{proof}

This last isomorphism can be generalized to any closed embedding of
smooth schemes, if we allow to work in a suitable localized category
of $\Delta^{\op}\Shv(\Sm/k)_{\Nis}$. In that situation the equivalence will
be $M/(M-i(X))\cong \Th_X(\nu_i)$ if $i\co X\hookrightarrow M$ is a closed
embedding of smooth schemes. We will go back to this in the next
section.


\subsection{Homotopy categories of schemes}

In \fullref{sub:sites-sheaves} we have mentioned that we will be taking the
category $\Delta^{\op}\Shv(\Sm/k)_{\Nis}$ as replacement for
$\Delta^{\op}(\Sets)$ in the program of \fullref{topo}. The chain
of adjunctions of diagram \eqref{agg} has this algebraic counterpart:
\begin{equation}\label{agg-alg}
\xymatrix{\Ch_+(\mathcal{N}_k)\ar@<.5ex>[r]^{\text{trunc}} &
\Ch_{\geq 0}(\mathcal{N}_k) \ar@<.5ex>[r]^{K}\ar@<.5ex>[l]^{\text{forg}} & 
\Delta^{\op}(\mathcal{N}_k)\ar@<.5ex>[l]^{N}\ar@<.5ex>[r]^{\hskip -25pt
\text {forget}}
& \Delta^{\op}\Shv(\Sm/k)_{\Nis}\ar@<.5ex>[l]^{\hskip -25pt\mathbb{Z}[-]}}
\end{equation}
where $\mathcal{N}_k$ denotes the category of sheaves of abelian
groups for the Nisnevich topology (the definition is the same as for
$\Shv(\Sm/k)_{\Nis}$ except that it is a full subcategory of
$\Funct((\Sm/k)^{\op},\Ab)$). Notice that the diagram \eqref{agg-alg} is a
generalization of \eqref{agg} since there is a full embedding
$\Sets\hookrightarrow \Shv(\Sm/k)_T$ sending a set $S$ to the 
sheaf associated to the presheaf such that $F(U)=S$ for any $U\in \Sm/k$ and 
$F(f)=\id$ for any $f$ morphism
in $\Sm/k$. In \fullref{topo} we passed to localized categories
associated with the categories appearing in the adjunction. The
functors respected the localizing structures and induced
adjunctions on the localized categories. Here we will do the
same. Since $\mathcal{N}_k$ is an abelian category, it admits a derived
category $\mathcal{D}(\mathcal{N}_k)$. In general, localizing a
category may be a very complicated operation to do on a category,
being unclear even what morphisms look like in the localized category,
assuming it exists. D Quillen in \cite{quil} developed a systematic
approach to this issue, under the assumption that the category in
question had a {\em model structure}. This is a set of axioms which 
three classes of morphisms (weak equivalences, cofibrations and
fibrations) have to satisfy. In
a sense, this makes the category similar to the category of topological
spaces and the localization with repect of weak equivalences becomes
treatable much in the same way as the unstable homotopy category of
topological spaces $\mathcal{H}$. The category
$\Delta^{\op}(\Shv(\Sm_k)_{T}$ can be given a structure of model category
as follows:
\begin{definition}\label{de}
\begin{enumerate}
\item A {\em point} in $\Delta^{\op}\Shv(\mathcal{C})_T$ is a functor
  $\Delta^{\op}\Shv(\mathcal{C})_T\to \Sets$ commuting with finite limits
  and all colimits.
\item\label{rlp} Let a square
$$\xymatrix{A\ar[r]\ar[d]_g & B\ar[d]_f \\
  C\ar[r]\ar@{.>}[ur]^{\exists h} & D}$$
  be given in a category. We say
  that $f$ has the {\em right lifting property} with respect of $g$ if
  there exists a lifting $h$ making the diagram commute. In the same
  instance, we say that $g$ has the {\em left lifting property} with
  respect of $f$.
\end{enumerate}
\end{definition}

\begin{definition}\label{modelli}
Let $\mathcal{C}_T$ be a site and $f\co \mathcal{X}\to\mathcal{Y}$ a 
morphism in the site $\Delta^{\op}\Shv(\mathcal{C}_T)$. Then $f$ is called
\begin{enumerate}
\item a {\em simplicial weak equivalence} if for any ``point'' $x$ of 
$\Delta^{\op}\Shv(\mathcal{C})_T$ the morphism of simplicial sets
  $x(f)\co x(\mathcal{X})\to x(\mathcal{Y})$ is a simplicial weak
  equivalence;
\item a {\em cofibration} if it is a monomorphism;
\item a {\em fibration} if it has the right lifting property with
  respect to any cofibration which is a weak equivalence.
\end{enumerate}
\end{definition}

\begin{examples}
 Consider the site $\mathcal{C}=(\Sch/k)_{\Zar}$ or $(\Sm/k)_{\Zar}$ and
  $x\co \Spec L\to X$ be a point of a scheme $X$ over $k$. Let $I_{x}$ be
  the cofiltered category of Zariski open subschemes of $X$ containing
 $x$. Then the
  functor $F\to\colim_{U\in I_x}F(U)$ is a point for
  $\Shv(\mathcal{C})_{\Zar}$. If we change the topology to Nisnevich or
 \'etale, to get points for the respective categories we can take
 $$I_x^{\Nis}=\{p\co U\to X\, \text{\'etale~such~that}~ \exists y\in p^{-1}x\,
\text{with}~ p\co k(y)\stackrel{\cong}{\to}k(x)\}$$ and
 $$I_x^{\et}=
\left\{\text{diagrams}
\begin{matrix}
\xymatrix{ & U\ar[d]^p\\
 \Spec K\ar[r]^{\bar{x}}
 & X}
\end{matrix}\,\text{with}\, p\; \text{\'etale}\right\}$$ 
where $\bar{x}\co \Spec K\to
 \Spec k(x)\stackrel{x}{\to} X$ and $K$ is a separably algebraically
 closed field. Notice that the latter is a $K$ dependent category.
\end{examples}

Jardine \cite{jard} checked that the classes of morphisms of \fullref{modelli} satisfy the axioms of a model structure. Its 
homotopy category, that is the localization inverting the weak equivalences,
will be denoted as $\mathcal{H}_s(k)$. A generalization of the
Dold--Kan Theorem implies

\begin{proposition}\label{secondo*}
The maps in diagram \eqref{agg-alg} preserve the mentioned localizing
structures. In particular, the adjoint functors in that diagram induce
a bijection of sets
\begin{equation}
\Hom_{\mathcal{D}_+(\mathcal{N}_k)}(N\mathbb{Z}[\mathcal{X}],
D[i]_*)\cong \Hom_{\mathcal{H}_s(k)}(\mathcal{X},
K(\mathrm{trunc}(D[i]_*)))
\end{equation}
for any $D_*\in \Ch_+(\mathcal{N}_k)$,
$\mathcal{X}\in\Delta^{\op}\Shv(\Sm/k)_{\Nis}$ and $i\in\mathbb{Z}$.
\end{proposition}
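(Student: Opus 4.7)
The plan is to parallel the proof of \fullref{secondo} by verifying that each of the three adjunctions composing diagram \eqref{agg-alg} descends to the respective localized categories, and then to compose the resulting adjunctions of Hom-sets.

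First I would check preservation of weak equivalences functor by functor. The truncation and forgetful functors on complexes manifestly preserve quasi-isomorphisms, so they descend to an adjunction between $\mathcal{D}_+(\mathcal{N}_k)$ and $\mathcal{D}_{\geq 0}(\mathcal{N}_k)$; since $N\mathbb{Z}[\mathcal{X}]$ is concentrated in non-negative degrees, the unit of this adjunction gives an isomorphism on the relevant Hom-sets. Next, \fullref{Dold--Kan} applies with $\mathcal{A} = \mathcal{N}_k$ (a Grothendieck abelian category, hence with enough injectives): the functors $N$ and $K$ exchange chain homotopies with simplicial homotopies, so they send quasi-isomorphisms to simplicial weak equivalences and vice versa, and descend to an equivalence between $\mathcal{D}_{\geq 0}(\mathcal{N}_k)$ and the homotopy category of $\Delta^{\op}(\mathcal{N}_k)$. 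Finally, for the free--forget adjunction on the right, the forgetful functor clearly commutes with the points of \fullref{de} and hence preserves simplicial weak equivalences; and for $\mathbb{Z}[-]$ the key observation is that the stalk functors commute with both finite limits and all colimits, so $\mathbb{Z}[-]$ commutes with stalks, reducing preservation to the classical fact that the free simplicial abelian group on a weak equivalence of simplicial sets is a weak equivalence of simplicial abelian groups.

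Composing the three resulting adjunctions yields the chain
\begin{equation*}
\begin{split}
\Hom_{\mathcal{D}_+(\mathcal{N}_k)}&(N\mathbb{Z}[\mathcal{X}], D[i]_*)\\
&\cong \Hom_{\mathcal{D}_{\geq 0}(\mathcal{N}_k)}(N\mathbb{Z}[\mathcal{X}], \mathrm{trunc}(D[i]_*))\\
&\cong \Hom_{\mathcal{H}(\Delta^{\op}(\mathcal{N}_k))}(\mathbb{Z}[\mathcal{X}], K(\mathrm{trunc}(D[i]_*)))\\
&\cong \Hom_{\mathcal{H}_s(k)}(\mathcal{X}, K(\mathrm{trunc}(D[i]_*))),
\end{split}
\end{equation*}
which is the claimed bijection.

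The main obstacle is the last of the three preservation statements: it relies on the Nisnevich topos having enough points, so that weak equivalences in $\mathcal{H}_s(k)$ can be detected stalkwise, together with the commutation of $\mathbb{Z}[-]$ and $N$ with these stalks. Once this is granted, the remaining work is formal manipulation of adjoint functors combined with \fullref{Dold--Kan}, exactly as in the topological argument leading to \fullref{secondo}.
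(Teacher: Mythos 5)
Your proposal is correct and follows essentially the same route as the paper, which simply asserts the result as a consequence of the generalized Dold--Kan theorem and the compatibility of the functors in diagram \eqref{agg-alg} with the localizing structures, in direct parallel with \fullref{secondo}. You in fact supply more detail than the text does, in particular the stalkwise verification that $\mathbb{Z}[-]$ preserves simplicial weak equivalences, which is exactly the point the paper leaves implicit.
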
 
Associating to any set $S$ the constant (simplicially and as a
sheaf) simplicial sheaf $S$, we can embed $\Delta^{\op}\Sets$ in
$\Delta^{\op}\Shv(\Sm/k)_{\Nis}$. Since this embedding preserves
the localizing structures we get a faithfully full embedding of
$\mathcal{H}$ in $\mathcal{H}_s(k)$. In particular, the canonical
projection of $\Delta^1\to \Spec k$ becomes an isomorphism in
$\mathcal{H}_s(k)$. Thus \fullref{secondo*}
is a generalization of \fullref{secondo}. However, while in
the topological case $\mathcal{H}$ is the correct category to study
topological spaces with respect of homotopy invariant functors like
singular cohomology, the category $\mathcal{H}_s(k)$ is not
appropriate for certain applications: for example \fullref{purezza} is false in $\mathcal{H}_s(k)$. This category
can be made more
effective to study algebraic varieties via motivic cohomology. Indeed,
we can localize further $\mathcal{H}_s(k)$ without loosing any
information detected by motivic cohomology: it is known that for any
algebraic variety $X$, the canonical projection $X\times_k\aff\to X$
induces via pullback of cycles an isomorphism on (higher) Chow groups
and the same holds for the definition of motivic cohomology of
\fullref{motcoho}, if $k$ is a perfect field. 
Therefore, inverting all such
projections is a lossless operation with respect to these functors.
To localize a localized category we can employ the Bousfield
framework \cite{bous} and define 
$\mathcal{X}\in\Delta^{\op}\Shv(\Sm/k)_{\Nis}$ to be a simplicial
sheaf $\aff$ {\it local} if for any
$\mathcal{Y}\in\Delta^{\op}\Shv(\Sm/k)_{\Nis}$, the maps of sets
$$\Hom_{\mathcal{H}_s(k)}(\mathcal{Y},\mathcal{X})\stackrel{p^*}{\to}
\Hom_{\mathcal{H}_s(k)}(\mathcal{Y}\times_k\aff,\mathcal{X})$$ induced by
the projection $\mathcal{Y}\times_k\aff\to\mathcal{Y}$, is a
bijection. 
\begin{definition}\label{bou}
A map $f\co \mathcal{X}\to\mathcal{Y}$ in $\Delta^{\op}\Shv(\Sm/k)_{\Nis}$ is
called:
\begin{enumerate}
\item an {\em $\aff$ weak equivalence} if for any $\aff$ local sheaf
  $\mathcal{Z}$, the map of sets 
$$\Hom_{\mathcal{H}_s(k)}(\mathcal{Y},\mathcal{Z})\stackrel{f^*}{\to}
\Hom_{\mathcal{H}_s(k)}(\mathcal{X},\mathcal{Z})$$
ia a bijection;
\item an $\aff$ cofibration if it is a monomorphism;
\item an $\aff$ fibration if it has the right lifting property
  (\fullref{de} \eqref{rlp}) with respect to monomorphisms that
  are $\aff$ weak equivalence.
\end{enumerate}
\end{definition}

Morel and Voevodsky showed in \cite{morvoe} that this is a (proper) model
structure on $\Delta^{\op}\Shv(\Sm/k)_{\Nis}$. The associated homotopy
category will be denoted as $\mathcal{H}(k)$ and called the {\it
  unstable homotopy category of schemes over $k$}.
\begin{remark}[see \cite{morvoe}]
The inclusion of the full subcategory
  $\smash{\mathcal{H}_{s,\aff}}(k)\hookrightarrow\mathcal{H}_s(k)$ of $\aff$
  local objects has a left adjoint, denoted by $\Sing(-)$, that identifies
  $\smash{\mathcal{H}_{s,\aff}}(k)$ with $\mathcal{H}(k)$. Such functor is
  equipped of a natural transformation $\Theta\co \Id\to \Sing$ that is an 
  $\aff$ weak equivalence on objects.
\end{remark}

Working in the category $\mathcal{H}(k)$ as opposed to
$\Delta^{\op}\Shv(\Sm/k)_{\Nis}$ makes schemes more ``flexible'' quite like
working with $\mathcal{H}$ in topology. In this case, however, we have
an high algebraic content, thus such flexibility should be made more
specific with a few examples. The next results are stated in unpointed
categories, although they could be formulated in the pointed setting
just as well.

\begin{proposition}
 Let $V\to X$ be a vector bundle and
  $\mathbb{P}V\hookrightarrow\mathbb{P}(V\oplus\affx)$ be the
  inclusion of the projectivized scheme induced by the zero section $$V\to
  V\oplus\mathbb{A}_X^1.$$ Then there exists a canonical morphism
  $\mathbb{P}(V\oplus\affx)/\mathbb{P}V\to \Th(V)$ in $\Shv(\Sm/k)_{\Nis}$ 
that is an $\aff$ equivalence.
\end{proposition}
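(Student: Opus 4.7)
The plan is to realize both sides as pieces of the single scheme $\mathbb{P}(V\oplus\affx)$ and exhibit the map via an elementary square together with an $\aff$--trivial affine bundle.

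First I would set up the geometry. The scheme $\mathbb{P}(V\oplus\affx)$ contains $V$ as the open subscheme where the $\affx$--coordinate is nonzero (the ``big cell''), with closed complement precisely $\mathbb{P}V$. The zero section $0_X\hookrightarrow V$ then sits as a closed subscheme of $\mathbb{P}(V\oplus\affx)$, and by construction it is disjoint from $\mathbb{P}V$. This gives two open subschemes of $\mathbb{P}(V\oplus\affx)$, namely $V$ and $\mathbb{P}(V\oplus\affx)\setminus 0_X$, whose union is all of $\mathbb{P}(V\oplus\affx)$ and whose intersection is $V\setminus 0_X$.

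Next I would invoke the sheaf/excision property. The resulting square
\begin{equation}
\begin{split}
\xymatrix{V\setminus 0_X\ar[r]\ar[d] & V\ar[d]\\
\mathbb{P}(V\oplus\affx)\setminus 0_X\ar[r] & \mathbb{P}(V\oplus\affx)}
\end{split}
\end{equation}
is an elementary square in the sense of the Morel--Voevodsky proposition (it is in fact a Zariski cover, hence \emph{a fortiori} Nisnevich). Taking cofibers of the horizontal arrows therefore yields a canonical isomorphism of Nisnevich sheaves
\begin{equation}
\Th(V)=V/(V\setminus 0_X)\stackrel{\cong}{\longrightarrow}\mathbb{P}(V\oplus\affx)/(\mathbb{P}(V\oplus\affx)\setminus 0_X).
\end{equation}
This already identifies the Thom sheaf with a quotient inside $\mathbb{P}(V\oplus\affx)$; it remains to replace $\mathbb{P}(V\oplus\affx)\setminus 0_X$ by $\mathbb{P}V$ up to $\aff$--equivalence.

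For this last step I would observe that projection away from the zero section realizes $\mathbb{P}(V\oplus\affx)\setminus 0_X$ as the total space of a line bundle (in fact $\mathcal{O}(1)$) over $\mathbb{P}V$, with the inclusion $\mathbb{P}V\hookrightarrow \mathbb{P}(V\oplus\affx)\setminus 0_X$ being its zero section. Since any vector bundle is Zariski--locally trivial, and the projection $X\times\aff\to X$ is by definition inverted in $\mathcal{H}(k)$, a standard local-to-global argument using the Mayer--Vietoris/elementary-square property shows that the zero section of any vector bundle is an $\aff$--weak equivalence. Hence the inclusion $\mathbb{P}V\hookrightarrow \mathbb{P}(V\oplus\affx)\setminus 0_X$ is an $\aff$--equivalence, and the induced map on cofibers
\begin{equation}
\mathbb{P}(V\oplus\affx)/\mathbb{P}V\longrightarrow \mathbb{P}(V\oplus\affx)/(\mathbb{P}(V\oplus\affx)\setminus 0_X)
\end{equation}
is an $\aff$--equivalence by properness of the model structure. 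Composing with the isomorphism of the previous paragraph gives the canonical $\aff$--equivalence $\mathbb{P}(V\oplus\affx)/\mathbb{P}V\to \Th(V)$.

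The main obstacle is the claim that the zero section of an arbitrary vector bundle is an $\aff$--equivalence: for trivial bundles it is literally the inverted projection $X\times\aff\to X$ (iterated), but in the general, non-trivial case one must glue this along a Zariski trivialization using the elementary-square property together with the fact that $\aff$--equivalences are stable under pushout along cofibrations. Once that lemma is in hand, the argument above is essentially formal, and naturality of the elementary-square identification ensures that the resulting map $\mathbb{P}(V\oplus\affx)/\mathbb{P}V\to\Th(V)$ is canonical.
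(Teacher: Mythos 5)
Your proof is correct and is essentially the argument of Morel--Voevodsky's Proposition 2.17, which is all the paper offers here (the text simply cites that result): cover $\mathbb{P}(V\oplus\affx)$ by the two opens $V$ and $\mathbb{P}(V\oplus\affx)\setminus 0_X$, use the elementary-square identification of cofibers of parallel maps to get $\Th(V)\cong \mathbb{P}(V\oplus\affx)/(\mathbb{P}(V\oplus\affx)\setminus 0_X)$ already at the level of Nisnevich sheaves, then contract $\mathbb{P}(V\oplus\affx)\setminus 0_X$ onto $\mathbb{P}V$. One remark on the step you single out as the main obstacle: showing that the zero section of a (possibly non-trivial) vector bundle is an $\aff$-weak equivalence does not require the Zariski local-to-global gluing you sketch. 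For a genuine vector bundle $p\co E\to B$ with zero section $s$, the fibrewise scaling map $\affs\times E\to E$, $(t,e)\mapsto te$, is an elementary $\aff$-homotopy from $s\circ p$ to $\id_E$, and since $p\circ s=\id_B$ this exhibits $p^*$ and $s^*$ as mutually inverse bijections on maps into any $\aff$-local object; this applies directly to the line bundle $\mathbb{P}(V\oplus\affx)\setminus 0_X\to\mathbb{P}V$. (The local-to-global argument is genuinely needed only for torsors under vector bundles, which have no zero section.) The remaining point --- that the induced map of cofibers is an $\aff$-equivalence, via the gluing lemma and left properness --- is fine as you state it, since both legs being collapsed are monomorphisms, hence cofibrations.
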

\begin{proof}
See Morel--Voevodsky \cite[Proposition~2.17, p112]{morvoe}.
\end{proof}

\begin{corollary}
The canonical morphism $\mathbb{P}^n_k/\mathbb{P}^{n-1}_k\to T^{\wedge
  n}$ is an $\aff$ weak equivalence for any positive integer $n$.
\end{corollary}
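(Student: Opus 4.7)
The plan is to derive this as an immediate corollary of the preceding proposition by specializing to the trivial bundle over $\Spec k$. Concretely, I would take $X=\Spec k$ and let $V=\mathbb{A}^n_k$, viewed as the trivial rank $n$ bundle over $\Spec k$. Then $V\oplus\mathbb{A}^1_X=\mathbb{A}^{n+1}_k$, so its projectivization is $\mathbb{P}(V\oplus\mathbb{A}^1_X)=\mathbb{P}^n_k$, and the subbundle $\mathbb{P}V$ cut out by the zero section is $\mathbb{P}^{n-1}_k\hookrightarrow\mathbb{P}^n_k$ in the standard way. The preceding proposition then produces a canonical $\aff$ weak equivalence
\begin{equation}
\mathbb{P}^n_k/\mathbb{P}^{n-1}_k\longrightarrow \Th(\mathbb{A}^n_k).
\end{equation}

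Next, I would invoke \fullref{cheneso} applied to the trivial bundle $\mathbb{A}^n_X$ with $X=\Spec k$: this gives a canonical isomorphism of pointed sheaves $\Th(\mathbb{A}^n_k)\cong T^{\wedge n}\wedge(\Spec k)_+$. Since $(\Spec k)_+$ is the unit object for the smash product in the category of pointed sheaves (smashing with it just collapses the disjoint basepoint added back to itself), one has $T^{\wedge n}\wedge(\Spec k)_+\cong T^{\wedge n}$, and these identifications are natural so the composite map $\mathbb{P}^n_k/\mathbb{P}^{n-1}_k\to T^{\wedge n}$ agrees with the canonical one built from the standard affine chart $\mathbb{A}^n_k\hookrightarrow\mathbb{P}^n_k$ sending $\mathbb{A}^n_k-0$ into $\mathbb{P}^{n-1}_k$.

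There is essentially no obstacle here beyond bookkeeping: the content is entirely in the earlier projective-bundle-versus-Thom-space result, and the work reduces to matching up the trivializations so that the canonical quotient map $\mathbb{P}^n_k/\mathbb{P}^{n-1}_k\to T^{\wedge n}$ coincides, in $\mathcal{H}(k)$, with the composition provided by the proposition and \fullref{cheneso}. The only point to double-check is that the ``canonical'' map in the statement of the corollary is really the one induced by the standard open immersion $\mathbb{A}^n_k\subset\mathbb{P}^n_k$ together with the quotient $\mathbb{A}^n_k/(\mathbb{A}^n_k-0)=T^{\wedge n}$; once this identification is made explicit, everything follows by functoriality. \qed
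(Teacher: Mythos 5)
Your proposal is correct and is exactly the intended derivation: the paper states this as an immediate corollary of the projective-bundle/Thom-sheaf proposition, obtained by specializing to the trivial rank $n$ bundle over $\Spec k$ and then identifying $\Th(\mathbb{A}^n_k)\cong T^{\wedge n}\wedge(\Spec k)_+\cong T^{\wedge n}$ via \fullref{cheneso}. Your remark about matching the canonical quotient map with the standard affine chart is the right (and only) point requiring care.
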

One of the first consequences of the algebraic data lying in the category
$\mathcal{H}(k)$ is the existence of several nonisomorphic ``circles''
in that category. Let $S_s^1$ denote the sheaf
$\Delta^1/\partial\Delta^1$ and $\mathbb{G}_m$ be $\aff-\{0\}$.
The following result is a particularly curious one and states that the
simplicial sheaf $T=\aff/(\aff-\{0\})$, which should embody the
properties of a sphere or a circle, is made of an algebraic and a
purely simplicial part. 

\begin{proposition}
There is an $\aff$ weak equivalence $\mathbb{G}_m\wedge S_s^1\cong T$.
\end{proposition}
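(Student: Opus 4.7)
The plan is to exploit the $\aff$--contractibility of $\aff$ together with the cofibre-sequence property \eqref{b1}.

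The open immersion $i\co \mathbb{G}_m\hookrightarrow\aff$ is a monomorphism of sheaves, hence an $\aff$ cofibration by \fullref{bou}; its strict pointed cofibre is, by definition, $T=\aff/(\aff-\{0\})$, and because $i$ is a cofibration this strict cofibre also represents the homotopy cofibre in $\hkp$. On the other hand, the projection $\aff\to\Spec k$ is an $\aff$--weak equivalence straight from the definition of $\aff$--locality in \fullref{bou}; consequently $\aff\cong \Spec k$ in $\mathcal{H}(k)$, and therefore $\aff\wedge S_s^1\cong\Spec k$ in $\hkp$.

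Property \eqref{b1} applied to $i$ produces a cofibre sequence in $\hkp$:
$$\mathbb{G}_m\stackrel{i}{\longrightarrow}\aff\longrightarrow T\stackrel{\delta}{\longrightarrow}\mathbb{G}_m\wedge S_s^1\longrightarrow\aff\wedge S_s^1\longrightarrow\cdots$$
Applying $\Hom_{\hkp}(-,\mathcal{W})$ yields, again by \eqref{b1}, a long exact sequence of pointed sets (and of abelian groups past the first suspension term). Since $\Hom_{\hkp}(\aff,\mathcal{W})=\Hom_{\hkp}(\aff\wedge S_s^1,\mathcal{W})=\ast$ for every $\mathcal{W}\in\hkp$, exactness forces $\delta^*$ to be a bijection, naturally in $\mathcal{W}$. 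A Yoneda argument in $\hkp$ then promotes this to an isomorphism $\delta\co T\stackrel{\cong}{\to}\mathbb{G}_m\wedge S_s^1$, i.e.\ the desired $\aff$--weak equivalence.

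The main obstacle is just the bookkeeping: one has to verify at which terms of the Puppe sequence exactness may be invoked (pointed-set versus group-level), and that the strict quotient $\aff/\mathbb{G}_m$ really computes the homotopy cofibre in the $\aff$--local pointed category. Both are standard consequences of $i$ being a monomorphism together with the Morel--Voevodsky model structure on $\Delta^{\op}\Shv(\Sm/k)_{\Nis}$.
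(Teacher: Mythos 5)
Your argument is correct, but it is not the one the paper points to: the manuscript simply cites Morel--Voevodsky [Lemma~2.15], whose proof goes through $\mathbb{P}^1$. There one writes $\mathbb{P}^1$ as the pushout of the two standard affine charts $\aff\leftarrow\mathbb{G}_m\rightarrow\aff$ (an elementary Zariski square, hence a homotopy pushout), so that contractibility of each chart identifies $\mathbb{P}^1$ with the suspension $S_s^1\wedge\mathbb{G}_m$, while collapsing one chart identifies $\mathbb{P}^1$ with $\aff/\mathbb{G}_m=T$. You instead run the Puppe sequence of the single inclusion $\mathbb{G}_m\hookrightarrow\aff$ and never mention $\mathbb{P}^1$; this is more economical and is the direct ``cofibre of a map into a contractible object is the suspension'' argument. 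The only point I would tighten is the step ``exactness forces $\delta^*$ to be a bijection'': surjectivity of $\delta^*$ follows from $\Hom_{\hkp}(\aff,\mathcal{W})=\ast$, but injectivity does not follow from exactness of pointed sets alone, since $\Hom_{\hkp}(T,\mathcal{W})$ is only a pointed set and a trivial ``kernel'' need not mean injective; you need the refined coexactness of the Puppe sequence (the action of the group $\Hom_{\hkp}(\mathbb{G}_m\wedge S_s^1,\mathcal{W})$ on $\Hom_{\hkp}(T,\mathcal{W})$, whose orbits are the fibres of the map to $\Hom_{\hkp}(\aff,\mathcal{W})$). Alternatively, and more cleanly, you can skip the Yoneda argument entirely: since the Morel--Voevodsky structure is left proper and $\aff\to\Spec k$ is an $\aff$--weak equivalence, the gluing lemma for homotopy pushouts compares the pushout of $\Spec k\leftarrow\mathbb{G}_m\rightarrow\aff$ (which is $T$) with that of $\Spec k\leftarrow\mathbb{G}_m\rightarrow\Spec k$ (which is $\mathbb{G}_m\wedge S_s^1$) directly. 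The MV route buys you the extra identification with $\mathbb{P}^1$ pointed at $\infty$, which the paper uses later (for example in the corollary $\mathbb{P}^n/\mathbb{P}^{n-1}\cong T^{\wedge n}$); your route isolates exactly the statement asked for.
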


\begin{proof} See Morel--Voevodsky \cite[Lemma 2.15]{morvoe}.
\end{proof}

In the previous few results the Nisnevich topology did not play any
role. On the contrary, the theorem below uses this
assumption. Arguably, it is the most important result proved in
\cite{morvoe} and it represents a very useful tool which is crucial in
several applications such as the computation of the motivic Steenrod
operations (see Voevodsky \cite{voe-oper}) and in the proof of the
Milnor Conjecture (see Voevodsky \cite{on2tors}). At this point, it
appears to be the main reason to work in the category $\mathcal{H}(k)$
instead of $\mathcal{H}_s(k)$.

\begin{theorem}[Morel--Voevodsky {{\cite[Theorem~2.23, p115]{morvoe}}}]
\label{purezza}
Let $i\co Z\hookrightarrow X$ be a closed embedding of smooth schemes and
$\nu_i$ its normal bundle. Then there is a canonical isomorphism in
$\mathcal{H}(k)$
$$X/(X-i(Z))\cong \Th(\nu_i)$$
\end{theorem}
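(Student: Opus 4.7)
The plan is to interpolate between $X/(X-i(Z))$ and $\Th(\nu_i)$ through an intermediate object built from the \emph{deformation to the normal cone}, and to show that both natural comparison maps are $\aff$ weak equivalences. This is the Morel--Voevodsky strategy; the smoothness hypothesis and the Nisnevich topology enter through a local triviality result, which is the technical heart of the proof.

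First I would construct $D = D(X,Z)$, the deformation to the normal cone, as the complement in $\mathrm{Bl}_{Z\times\{0\}}(X\times \aff)$ of the strict transform of $X\times\{0\}$. Then $D$ is smooth over $k$, the composition $\pi\co D\to X\times\aff\to \aff$ has fiber $X$ over $t=1$ and fiber $\nu_i$ over $t=0$, and the product $Z\times \aff$ embeds as a closed subscheme of $D$ in a way compatible with $Z\hookrightarrow X$ at $t=1$ and with the zero section $Z\hookrightarrow \nu_i$ at $t=0$. Restricting to these two fibers yields morphisms in $\Shv(\Sm/k)_{\Nis}$
\begin{equation*}
X/(X-i(Z)) \xrightarrow{\ \iota_1\ } D/(D - Z\times \aff) \xleftarrow{\ \iota_0\ } \nu_i/(\nu_i - Z) = \Th(\nu_i).
\end{equation*}
It then suffices to prove that $\iota_0$ and $\iota_1$ are $\aff$ weak equivalences.

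Both $\iota_0$ and $\iota_1$ are instances of the following general claim: if $\pi\co Y\to \aff$ is a smooth morphism and $W\hookrightarrow Y$ is a closed subscheme such that $\pi\vert_W\co W\to \aff$ is an isomorphism with target $Z\times \aff$, then for any $k$--rational point $t\co \Spec k\to \aff$ the induced map $Y_t/(Y_t - W_t)\to Y/(Y-W)$ is an $\aff$ weak equivalence. I would prove this by descent: using the elementary square characterization of Nisnevich sheaves, choose a Nisnevich cover of $Y$ over which the pair $(Y,W)$ becomes isomorphic to a product $(\aff\times Y_0,\aff\times W_0)$; the claim on the cover is then immediate because $Y/(Y-W)\cong \aff_+\wedge Y_0/(Y_0-W_0)$ and $\aff\to \Spec k$ is an $\aff$ weak equivalence by the very definition of $\mathcal{H}(k)$ in \fullref{bou}. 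Gluing via elementary squares extends the equivalence back to $Y$ itself.

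The main obstacle is this Nisnevich-local triviality of the pair $(D,Z\times \aff)$. Concretely, one needs that for a closed embedding of smooth schemes of codimension $d$, the pair $(X,Z)$ is Nisnevich-locally on $X$ isomorphic to the standard pair $({\mathbb A}^d\times Z, 0\times Z)$; the same statement applied to $(D,Z\times\aff)$ then produces the Nisnevich cover needed in the previous paragraph. This requires one to lift an étale frame for $N_{Z/X}$ to genuine étale coordinates on a Nisnevich neighbourhood of each point of $Z$, using Hensel's lemma together with the fact that Nisnevich neighbourhoods detect isomorphisms on residue fields (cf.\ the definition of Nisnevich covers in \fullref{esempi}). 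Once this local model is secured, the two steps above assemble into the desired isomorphism $X/(X-i(Z))\cong \Th(\nu_i)$ in $\mathcal{H}(k)$.
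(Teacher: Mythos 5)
The paper gives no proof of this statement, deferring entirely to the citation of Morel--Voevodsky; your sketch is a correct outline of precisely the argument in that reference --- deformation to the normal cone, reduction of the two fiber-inclusion comparisons $X/(X-i(Z))\to D/(D-Z\times\aff)\leftarrow \Th(\nu_i)$ to the Nisnevich-locally trivial case, and the \'etale-coordinate (Hensel's lemma) local model $(\mathbb{A}^d\times Z, 0\times Z)$ for a smooth closed pair. The only step you pass over lightly is the final gluing: extending the weak equivalence from the pieces of the cover back to $Y$ requires that the square of quotient sheaves coming from an elementary square be a homotopy pushout (the maps are monomorphisms, hence cofibrations), which is exactly how Morel--Voevodsky close the argument.
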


Notice the analogy between this congruence of Nisnevich sheaves and
the Nisnevich sheaf isomorphism of \fullref{purezza1}. This is
even more evident in the following corollary.

\begin{corollary}
Let $L/k$ be a finite and separable field extension and $X$ a smooth
scheme of dimension $n$ with an $L$ point $x\co \Spec L\to X$. 
Then there is an isomorphism
$$X/(X-x)\cong T^{\wedge n}\wedge (\Spec L)_+$$
in the category $\mathcal{H}(k)$.
\end{corollary}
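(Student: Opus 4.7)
The plan is to combine the Morel--Voevodsky purity theorem with the triviality of any vector bundle over a field, and then apply the Thom sheaf computation of Corollary \ref{cheneso}. The corollary should drop out almost immediately once one notes that the normal bundle of a rational point has no room to be nontrivial.

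First I would view the $L$--point $x\co\Spec L\hookrightarrow X$ as a closed immersion of smooth schemes over $k$: since $L/k$ is finite and separable, $\Spec L$ is smooth (in fact \'etale) over $\Spec k$, and because $L$ is finite over $k$ the morphism $x$ is proper with discrete image, hence a closed embedding. Thus \fullref{purezza} applies and yields a canonical isomorphism
\begin{equation*}
X/(X-x)\;\cong\;\Th(\nu_x)
\end{equation*}
in $\mathcal{H}(k)$, where $\nu_x$ is the normal bundle of $x$.

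Next I would identify $\nu_x$ explicitly. Because $X$ is smooth of dimension $n$ and $\Spec L$ is zero-dimensional, $\nu_x$ is a locally free $\mathcal{O}_{\Spec L}$--module of rank $n$, i.e.\ an $n$--dimensional $L$--vector space. Every finite dimensional $L$--vector space is free, so $\nu_x$ is the trivial rank $n$ bundle $\mathbb{A}^n_L=\mathbb{A}^n_k\times_k\Spec L$ over $\Spec L$. Applying \fullref{cheneso} to $\Spec L$ therefore gives a canonical isomorphism of pointed sheaves
\begin{equation*}
\Th(\nu_x)\;\cong\;\Th(\mathbb{A}^n_L)\;\cong\;T^{\wedge n}\wedge(\Spec L)_+,
\end{equation*}
and concatenating this with the purity isomorphism above produces the desired equivalence in $\mathcal{H}(k)$.

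The only point requiring any real care is the verification that the hypotheses of \fullref{purezza} are genuinely in force --- specifically, that an $L$--point in the sense of this paper does realize $\Spec L$ as a closed smooth subscheme of $X$. Once that is in hand, the rest of the argument is formal: purity strips $X-x$ away up to Thom space, the normal bundle over a field is automatically trivial, and Corollary~\ref{cheneso} converts its Thom sheaf into the expected smash product. No new input from motivic homotopy theory is needed beyond the two results already cited; in particular, there is no need to pass through the base change trick used in \fullref{purezza1}, since purity already works unstably in $\mathcal{H}(k)$ for the embedding $x$ itself.
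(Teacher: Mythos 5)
Your proof is correct and is exactly the argument the paper intends: the corollary is stated as an immediate consequence of the purity theorem, obtained by applying it to the closed embedding $x\co \Spec L\to X$, observing that the rank $n$ normal bundle over the zero-dimensional scheme $\Spec L$ is necessarily trivial, and then invoking the Thom sheaf identification $\Th(\mathbb{A}^n_L)\cong T^{\wedge n}\wedge(\Spec L)_+$. Your added remark that one must check $x$ really is a closed immersion (ie that the residue field of the image point is $L$, which is the intended reading of ``$L$--point'' here) is a reasonable point of care but does not change the argument.
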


Let us go back to the adjunction \eqref{agg-alg} and representability 
of motivic cohomology. From \fullref{secondo*} and
\fullref{der}, we see that for any scheme $X$ and complex
$D_*\in \Ch_+(\mathcal{N}_k)$
\begin{equation}\label{sgiang}
\mathbb{H}^{i}(X, D_*)\cong
\Hom_{\mathcal{D}_+(\mathcal{N}_k)}(\mathbb{Z}[X], D[i])
\cong \Hom_{\mathcal{H}_s(k)}(X, K(\text{trunc}\,
D[i]_*))
\end{equation}
To see this, recall that the hypercohomology groups of a scheme $X$ are
 the right 
hyperderived functors of the global sections functor (on the small
site in question). Since
$\Hom_{\Ab-\Shv}(\mathbb{Z}\Hom_{\Sch/k}(-,X), F)=F(X)$, we conclude that
hypercohomology groups are $Ext$ functors of
$\mathbb{Z}[X]:=\mathbb{Z}\Hom_{\Sch/k}(-,X)$. This is isomorphic to
the left end side group in the isomorphism of \fullref{secondo*}
because $\mathbb{Z}[X]$ is a complex concentrated in dimension zero
(hence $N\mathbb{Z}[X]=\mathbb{Z}[X]$)
and of \fullref{der}. In particular, for any abelian group $A$
we get that (cfr. \fullref{motcoho}) 
$H^{i,j}(X,A)\cong \Hom_{\mathcal{H}_s(k)}(X, K (A(j)[i]))$, that is
motivic cohomology is a representable functor in
$\mathcal{H}_s(k)$. We now wish to derive an adjunction allowing to
conclude an identification similar to the one of \fullref{secondo*} but involving the category $\mathcal{H}(k)$. 
Since things work fine in the case of $\mathcal{D}_+(\mathcal{N}_k)$
and $\mathcal{H}_s(k)$ we may try to alterate the former category in
the same way we already did to obtain $\mathcal{H}(k)$ from
$\mathcal{H}_s(k)$. This leads us to the notion of complex $D_*\in
\Ch_+(\mathcal{N}_k)$ to be $\aff$ {\it local} if the morphism
$\aff\stackrel{p}{\to} \Spec k$ induces an isomorphism of groups
$$p^*\co \Hom_{\mathcal{D}_+(\mathcal{N}_k)}(C_*,
D_*)\stackrel{\cong}{\to}
\Hom_{\mathcal{D}_+(\mathcal{N}_k)}(C_*\stackrel{L}{\otimes}\mathbb{Z}[\aff],
D_*)$$
for any $C_*\in \Ch_+(\mathcal{N}_k)$. We than can endow
$\mathcal{D}_+(\mathcal{N}_k)$ of a model structure given by the
classes of maps described in \fullref{bou} adapted to this
derived category. The resulting category
$\mathcal{D}_+^{\affs}(\mathcal{N}_k)$ is equivalent to the
full subcategory of $\mathcal{D}_+(\mathcal{N}_k)$ of $\aff$ local
objects, and the inclusion functor $i$ has a left adjoint denoted by
$\Sing(-)$. One checks that
the pair of functors induced by $(N\circ\mathbb{Z}[-],
K\circ\text{trunc})$ of diagram \eqref{agg-alg} on derived and
simplicial homotopy categories preserve $\aff$ local objects,
therefore they induce an adjunction between
$\mathcal{D}_+^{\affs}(\mathcal{N}_k)$ and $\mathcal{H}_s(k)$:
\begin{theorem}\label{finale}
The pair of adjoint functors $(N\circ\mathbb{Z}[-],
K\circ\mathrm{trunc})$ induce a bijection of sets:
\begin{equation}\label{put}
\Hom_{\mathcal{D}_+^{\affs}(\mathcal{N}_k)}(N\mathbb{Z}[\mathcal{X}],
D[i]_*)\cong \Hom_{\mathcal{H}(k)}(\mathcal{X},K(\text{trunc}\,D[i]_*))
\end{equation}
for any $D_*\in \Ch_+(\mathcal{N}_k)$,
$\mathcal{X}\in\Delta^{\op}\nshsets$ and $i\in\mathbb{Z}$.
\end{theorem}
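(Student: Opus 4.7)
The strategy is to descend the adjunction of \fullref{secondo*} to the $\affs$--localizations on both sides. The key observation is that the Bousfield localization on either side is controlled by the same class of morphisms, namely the projections $\mathcal{Y}\times_k\aff\to\mathcal{Y}$ (simplicial setting) and the maps $C_*\otimes^L\mathbb{Z}[\aff]\to C_*$ (chain setting), and that the pair $(N\circ\mathbb{Z}[-],K\circ\mathrm{trunc})$ intertwines these because $\mathbb{Z}[-]$ is strong monoidal.

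First I would verify that the right adjoint $K\circ\mathrm{trunc}$ sends $\affs$--local complexes to $\affs$--local simplicial sheaves. Given an $\affs$--local $D_*$ and any $\mathcal{Y}\in\Delta^{\op}\nshsets$, \fullref{secondo*} yields a natural bijection
\begin{equation*}
\Hom_{\mathcal{H}_s(k)}(\mathcal{Y}, K(\mathrm{trunc}\,D[i]_*))\cong
\Hom_{\mathcal{D}_+(\mathcal{N}_k)}(N\mathbb{Z}[\mathcal{Y}], D[i]_*).
\end{equation*}
Because $\mathbb{Z}[\mathcal{Y}\times_k\aff]\cong\mathbb{Z}[\mathcal{Y}]\otimes\mathbb{Z}[\aff]$, the projection $\mathcal{Y}\times_k\aff\to\mathcal{Y}$ is sent under $N\circ\mathbb{Z}[-]$ to the very map which $\affs$--local objects in $\mathcal{D}_+(\mathcal{N}_k)$ invert; hence the bijection displayed above is unaffected by replacing $\mathcal{Y}$ with $\mathcal{Y}\times_k\aff$, which is exactly the $\affs$--locality of $K(\mathrm{trunc}\,D[i]_*)$. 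By adjunction, the dual statement also holds: $N\circ\mathbb{Z}[-]$ carries $\affs$--weak equivalences in $\Delta^{\op}\nshsets$ to $\affs$--weak equivalences in $\Ch_+(\mathcal{N}_k)$.

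Given this compatibility, I would conclude using the standard descent of an adjunction through Bousfield localization. Computing the right hand side of \eqref{put} via the fibrant replacement $\Sing$ in $\mathcal{H}_s(k)$,
\begin{equation*}
\Hom_{\mathcal{H}(k)}(\mathcal{X},K(\mathrm{trunc}\,D[i]_*))
\cong\Hom_{\mathcal{H}_s(k)}(\mathcal{X},\Sing K(\mathrm{trunc}\,D[i]_*)),
\end{equation*}
and identifying $\Sing K(\mathrm{trunc}\,D[i]_*)$ with $K(\mathrm{trunc}\,\Sing^{\mathrm{ch}}D[i]_*)$, where $\Sing^{\mathrm{ch}}$ denotes the $\affs$--local replacement in $\Ch_+(\mathcal{N}_k)$. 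The first step of the previous paragraph shows $K(\mathrm{trunc}\,\Sing^{\mathrm{ch}}D[i]_*)$ is $\affs$--local, while naturality of $\Theta\co\Id\to\Sing$ together with the preservation of $\affs$--weak equivalences by $K\circ\mathrm{trunc}$ (dual to the second step) pins down this identification up to simplicial weak equivalence. Applying \fullref{secondo*} to $\Sing^{\mathrm{ch}}D[i]_*$ then produces the bijection
\begin{equation*}
\Hom_{\mathcal{H}_s(k)}(\mathcal{X},K(\mathrm{trunc}\,\Sing^{\mathrm{ch}}D[i]_*))\cong
\Hom_{\mathcal{D}_+(\mathcal{N}_k)}(N\mathbb{Z}[\mathcal{X}], \Sing^{\mathrm{ch}}D[i]_*),
\end{equation*}
and the right side is exactly $\Hom_{\mathcal{D}_+^{\affs}(\mathcal{N}_k)}(N\mathbb{Z}[\mathcal{X}], D[i]_*)$ by construction of the $\affs$--local homotopy category as maps into fibrant (i.e.\ $\affs$--local) replacements.

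The main obstacle is the technical compatibility between the two Bousfield localizations, that is, ensuring that $(N\circ\mathbb{Z}[-],K\circ\mathrm{trunc})$ is a Quillen pair for the $\affs$--localized model structures on both sides. This reduces to the monoidal identity $\mathbb{Z}[\mathcal{Y}\times_k\aff]\cong\mathbb{Z}[\mathcal{Y}]\otimes\mathbb{Z}[\aff]$ (which requires a small flatness/cofibrancy argument to view the tensor product as derived), plus the assertion that the free simplicial abelian sheaf functor commutes with the formation of fibre products of representables, which is a direct consequence of the way $\mathbb{Z}[-]$ is extended from $\Sm/k$ to $\Delta^{\op}\nshsets$ by colimits.
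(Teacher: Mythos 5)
Your proposal is correct and takes essentially the same route as the paper, which disposes of this theorem in a single sentence by asserting that the pair $(N\circ\mathbb{Z}[-],K\circ\mathrm{trunc})$ preserves $\aff$--local objects so that the adjunction of \fullref{secondo*} descends to the localized categories; your verification via $\mathbb{Z}[\mathcal{Y}\times_k\aff]\cong\mathbb{Z}[\mathcal{Y}]\otimes\mathbb{Z}[\aff]$ is precisely the omitted check. The one step you assert more confidently than you justify is that $K\circ\mathrm{trunc}$ preserves $\aff$--weak equivalences --- this is not formally ``dual'' to the left adjoint preserving them, since $\aff$--weak equivalences are detected by mapping \emph{into} local objects rather than out of them --- but the claim is true and in any case harmless for the intended application, where $D_*=A(j)$ is already $\aff$--local.
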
    
To relate the left end side of the isomorphism \eqref{put} with
hypercohomology groups and hence with motivic cohomology, we employ
the adjunction $(\Sing(-),i)$: since
$\mathcal{D}_+^{\affs}(\mathcal{N}_k)$ is equivalent to the full
subcategory of $\aff$ local objects of $\mathcal{D}_+(\mathcal{N}_k)$,
it follows that
$$\Hom_{\mathcal{D}_+^{\affs}(\mathcal{N}_k)}(C_*,D_*)=
\Hom_{\mathcal{D}_+(\mathcal{N}_k)}(C_*, D_*)$$ 
for any $\aff$ local complexes $C_*$ and $D_*$. 
Let $\Sing\co \mathcal{H}_s(k)\to\mathcal{H}_s^{\aff}(k)$ denote also the 
functor left adjoint to the inclusion of the full subcategory
generated by the $\aff$ local simplicial sheaves in $\mathcal{H}_s(k)$. 
Since $N\mathbb{Z}[-]$ preserves $\aff$ local objects and $\aff$ weak
equivalences, $N\mathbb{Z}[\Sing(X)]$ is an $\aff$ local complex and
the canonical morphism $N\mathbb{Z}[\Theta]\co N\mathbb{Z}[X]\to 
N\mathbb{Z}[\Sing(X)]$ is an $\aff$ weak equivalence. Therefore, 
\begin{multline*}
\Hom_{\mathcal{D}_+^{\affs}(\mathcal{N}_k)}(N\mathbb{Z}[\Sing(X)],
L_*)=\Hom_{\mathcal{D}_+(\mathcal{N}_k)}(N\mathbb{Z}[\Sing(X)],
L_*)\stackrel{N\mathbb{Z}[\Theta]^*}{\longrightarrow} \\
\Hom_{\mathcal{D}_+(\mathcal{N}_k)}
(N\mathbb{Z}[X], L_*)\cong\mathbb{H}^i_{\Nis}(X, L_*)
\end{multline*}
is a group isomorphism for any $\aff$ local complex $L_*$. 
In the case of motivic cohomology, if $k$ is a perfect field, the
complex of sheaves $A(j)$ are $\aff$ local for all $j\in\mathbb{Z}$, 
hence:

\begin{corollary}\label{finale*}
For a smooth scheme $X$ over a perfect field $k$, and an abelian
group $A$, we have 
\begin{equation}
\begin{aligned}
H^{i,j}(X,A) &\cong
\Hom_{\mathcal{D}_+^{\affs}(\mathcal{N}_k)}(\mathbb{Z}[\Sing(X)], A(j)[i]) \\
&\cong \Hom_{\mathcal{H}(k)}(X, K(\mathrm{trunc}\,A(j)[i]))
\end{aligned}
\end{equation}
\end{corollary}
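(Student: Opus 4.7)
The plan is to assemble the two claimed isomorphisms by composing the adjunction of \fullref{finale} with the chain of identifications displayed in the paragraph immediately preceding the corollary, and to isolate the single nontrivial input needed from the theory of motivic complexes.

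By \fullref{motcoho}, the group $H^{i,j}(X,A)$ is by definition $\mathbb{H}^i_{\Nis}(X, A(j))$. The chain of isomorphisms stated just before the corollary shows that, for any $\affs$-local complex $L_*\in \Ch_+(\mathcal{N}_k)$, one has a canonical group isomorphism
\begin{equation*}
\Hom_{\mathcal{D}_+^{\affs}(\mathcal{N}_k)}(N\mathbb{Z}[\Sing(X)], L_*[i])\;\cong\; \mathbb{H}^i_{\Nis}(X, L_*),
\end{equation*}
the two ingredients being that $N\mathbb{Z}[\Theta]\co N\mathbb{Z}[X]\to N\mathbb{Z}[\Sing(X)]$ is an $\affs$-weak equivalence and that $\Hom$ in $\mathcal{D}_+^{\affs}(\mathcal{N}_k)$ agrees with $\Hom$ in $\mathcal{D}_+(\mathcal{N}_k)$ when the target is $\affs$-local. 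Specializing $L_*=A(j)$ yields the first asserted isomorphism, provided $A(j)$ is itself $\affs$-local.

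For the second isomorphism I would apply \fullref{finale} to $\mathcal{X}=X$ (viewed as a constant simplicial sheaf) and $D_*=A(j)$, producing
\begin{equation*}
\Hom_{\mathcal{D}_+^{\affs}(\mathcal{N}_k)}(N\mathbb{Z}[X], A(j)[i])\;\cong\; \Hom_{\mathcal{H}(k)}(X, K(\mathrm{trunc}\,A(j)[i])),
\end{equation*}
and then use once more that $N\mathbb{Z}[\Theta]$ is an $\affs$-weak equivalence, so that it induces an isomorphism on $\Hom$ into any $\affs$-local target in $\mathcal{D}_+^{\affs}(\mathcal{N}_k)$, to rewrite the source of this Hom as $\Hom_{\mathcal{D}_+^{\affs}(\mathcal{N}_k)}(N\mathbb{Z}[\Sing(X)], A(j)[i])$. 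Concatenating the three isomorphisms gives the corollary.

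The main obstacle is the $\affs$-locality of $A(j)$: every other step is a formal consequence of the adjunctions and Bousfield localization discussed earlier in this section. This is exactly where the perfectness hypothesis on $k$ enters, and it is a deep homotopy invariance statement for the motivic complexes, proved in \cite{libro}. Concretely, one needs that for every smooth $Y$ the projection $Y\times_k\affs\to Y$ induces an isomorphism on $\mathbb{H}^*_{\Nis}(-,A(j))$, which in turn follows from Voevodsky's theorem that the Nisnevich cohomology of a homotopy invariant presheaf with transfers over a perfect field remains homotopy invariant; a Yoneda-type argument then translates this into the definition of $\affs$-locality used to construct $\mathcal{D}_+^{\affs}(\mathcal{N}_k)$. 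With this input black-boxed, the remainder of the proof is a mechanical stringing together of the isomorphisms described above.
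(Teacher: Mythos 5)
Your proof is correct and follows essentially the same route as the paper: the corollary is obtained by combining \fullref{finale} with the chain of isomorphisms in the preceding paragraph (using that $N\mathbb{Z}[\Theta]$ is an $\affs$--weak equivalence and that $\Hom$ in $\mathcal{D}_+^{\affs}(\mathcal{N}_k)$ agrees with $\Hom$ in $\mathcal{D}_+(\mathcal{N}_k)$ on $\affs$--local targets), with the single substantive input being the $\affs$--locality of $A(j)$ over a perfect field. You have also correctly isolated where perfectness enters, which is exactly the point the paper relies on.
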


The homotopy class of the simplicial sheaf $K(\mathrm{trunc}\,A(j)[i])$ is
called the $(i,j)$ {\it motivic Eilenberg--MacLane} simplicial sheaf with
coefficients in $A$, in analogy with the topological representing
space of singular cohomology and will be denoted by $K(A(j),i)$.

\section{Motivic cohomology operations}\label{operaz}

In this section the word {\it sheaf} will always mean {\it Nisnevich
  sheaf}, unless otherwise specified.
The objective of this section is to give a more solid basis to key
  techniques used to prove \fullref{nullo}, namely the use of
  {\it motivic cohomology operations} with finite coefficients.
 By such 
term we mean natural transformations
$$H^{*,*}(-,\p)\to H^{*+n,*+m}(-,\p)$$
commuting with suspension isomorphisms
$$\tilde{H}^{*,*}(\mathcal{X},\p)\stackrel{\sigma_s}{\to}
  \tilde{H}^{*+1,*}(\mathcal{X}\wedge S^1_s,\p)$$ and
$$\tilde{H}^{*,*}(\mathcal{X},\p)\stackrel{\sigma_t}{\to}\tilde{H}^{*+1,*+1}
(\mathcal{X}\wedge\mathbb{G}_m,\p)$$ 
for any pointed sheaf $\mathcal{X}$. See Voevodsky \cite[Theorem~2.4]{voe-oper}
for a proof that these canonical morphisms are 
isomorphisms in the case the base field is perfect. 
 We are interested to produce motivic cohomology
operations as similar as possible to the ones generating the 
{\it Steenrod algebra} $\mathcal{A}_{\top}^*$: they are classically 
denoted by $P^i$ and by $\beta$, the {\it Bockstein} operation, and 
the collection of them over all nonnegative $i$ 
generate $\mathcal{A}_{\top}^*$ as (graded) algebra over $\p$. At any
prime $p$, the operation $P^0$ is the identity. At the prime $p=2$,
the operation $P^i$ is usually denoted by $\Sq^{2i}$ and $\beta P^i$ by
$\Sq^{2i+1}$ and are sometimes called the {\it Steenrod squares}. This
is a classical subject and there are several texts available covering 
it. Among them, the ones of Steenrod and Epstein \cite{steen-eps} and
of Milnor \cite{milnor} contain the constructions and ideas which will 
serve as models to follow in the ``algebraic'' case. The topological
Steenrod algebra is in fact an Hopf algebra and is completely
determined by 
\begin{enumerate}
\item generators $\{\beta,P^i\}_{i\geq 0}$, 
\item the relations $P^0=1$ and the so called {\it Adem relations},
\item the diagonal
  $\psi^*\co \mathcal{A}_{\top}^*\to\mathcal{A}_{\top}^*\otimes_{\p}
\mathcal{A}_{\top}^*$ described by the so called {\it Cartan formulae}.
\end{enumerate}
The standard reference for the ``algebraic'' constructions is
Voevodsky's paper \cite{voe-oper}. To construct the operations $P^i$,
we need a suitable and cohomologically rich enough simplicial sheaf
$\mathcal{B}$ and an homomorphism 
\begin{equation}\label{totale}
P\co H^{2*,*}(-,\p)\to H^{2p*,p*}(-\wedge\mathcal{B},\p)   
\end{equation}
called the {\it total power operation}. The ``suitability'' that the
simplicial sheaf shall satisfy is that
$H^{*,*}(\mathcal{X}\wedge\mathcal{B},\p)$ is free as left
$H^{*,*}(\mathcal{X},\p)$ module  over a basis $\{b_i\}_i$ for any
pointed simplicial sheaf $\mathcal{X}$. We
can then obtain individual motivic cohomology operations from $P$ by
defining classes $A_{i}(x)$  satisfying the equality
$P(x)=\sum_i A_{i}(x)b_i$ for any $x\in H^{*,*}(\mathcal{X},\p)$. In
analogy with the topological case, the pointed simplicial sheaf
$\mathcal{B}$ is going to be chosen among the ones of the kind $BG$
for a finite group (or, more generally, a group scheme) $G$. In 
particular, we will consider 
$G$ to be $S_p$, the group of permutations on $p$ elements.   
There are various models for the homotopy class of $BG$ in
$\mathcal{H}_\bullet(k)$. We are interested to one of them whose motivic 
cohomology can be computed more easily for the groups $G$ considered:
let $r\co G\to Gl_d(k)$ be a faithful 
representation of $G$ and $U_n$ the open subset in
$\mathbb{A}^{dn}_k$ where $G$ acts freely with respect of $r$. The
open subschemes $\{U_n\}_n$ fit into a direct system induced by the
embeddings $\mathbb{A}^{dn}\hookrightarrow \mathbb{A}^{d(n+1)}$
given by $(x_1,x_2,\ldots,x_n)\to (x_1,x_2,\ldots,x_n,
0)$. For instance, let $\mu_p$ defined as the group scheme
given by the kernel of the homomorphism
$(-)^p\co\mathbb{G}_m\to\mathbb{G}_m$. We
can take $d=1$ and $r\co \p\hookrightarrow Gl_1(k)=k^*$. It follows that
$U_n=\mathbb{A}_k^n-0$ in this case.    
\begin{definition}
$BG$ is defined to be the pointed sheaf $\colim_n(U_n/G)$, where
  $U_n/G$ is the quotient in the category of schemes and the colimit
  is taken in the category of sheaves.
\end{definition}
Of all the results of this section, the computation of the motivic
cohomology of $BG$ is the only one that will be  given a complete
proof. This is because of its importance and relevance in the
structure of the motivic cohomology operations we are going to study
later in this section. 

\begin{theorem}[see Voevodsky {{\cite[Theorem~6.10]{voe-oper}}}]
\label{class}
Let $k$ be any field and $\mathcal{X}$ a pointed simplicial sheaf over
$k$. Then, if $p$ is odd,
\begin{equation}
\begin{split}
H^{*,*}(\mathcal{X}\wedge
(B\mu_p)_+,\p)=\dfrac{H^{*,*}(\mathcal{X},\p)\llbracket u,v\rrbracket}{(u^2)}.
\end{split}
\end{equation}
If $p=2$,
\begin{equation}
H^{*,*}(\mathcal{X}\wedge
(B\mu_p)_+,\p)=\dfrac{H^{*,*}(\mathcal{X},\p)\llbracket u,v\rrbracket
}{(u^2-\tau v+\rho u)},
\end{equation}
where
\begin{enumerate}
\item $u\in H^{1,1}(B\mu_p,\p)$ and $v\in H^{2,1}(B\mu_p,\p)$;
\item $\rho$ is the class of $-1$ in $H^{1,1}(k,\p)\cong k^*/(k^*)^p$;
\item $\tau$ is zero if $p\neq 2$ or $\chara(k)=2$;
\item $\tau$ is the generator of
  $H^{0,1}(k,\mathbb{Z}/2)=\Gamma(\Spec k,\mu_2)=\mu_2(k)$ 
if $p=2$ and $\chara(k)\neq 2$.
\end{enumerate}
\end{theorem}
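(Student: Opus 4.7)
The plan is first to reduce the statement for arbitrary $\mathcal{X}$ to computing $H^{*,*}(B\mu_p,\p)$ as an algebra over $H^{*,*}(\Spec k,\p)$: once this ring is identified as a free module of rank $2$ over a power series ring in $v$, the formula for $\mathcal{X}\wedge(B\mu_p)_+$ follows from a K\"unneth-type argument valid whenever one factor has free cohomology.

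To compute $H^{*,*}(B\mu_p,\p)$ itself, I would exploit the geometric model. Since $\mu_p\hookrightarrow Gl_1(k)=k^*$ acts on $U_n=\mathbb{A}^n-0$ by scalar multiplication, the quotient $U_n/\mu_p$ is the complement of the zero section in the total space of the line bundle $\mathcal{O}_{\mathbb{P}^{n-1}}(-p)$. Combining \fullref{purezza} with the Thom isomorphism produces a Gysin cofiber sequence whose induced long exact sequence in motivic cohomology has connecting map given by multiplication by the Euler class $c_1(\mathcal{O}(-p))=-p\cdot c_1(\mathcal{O}(1))$. Since this vanishes modulo $p$, the long exact sequence breaks into short exact ones, and combining the projective bundle formula for $\mathbb{P}^{n-1}$ with the colimit in $n$ yields an additive decomposition $H^{*,*}(B\mu_p,\p)=H^{*,*}(k,\p)\llbracket v\rrbracket\oplus u\cdot H^{*,*}(k,\p)\llbracket v\rrbracket$, with $v\in H^{2,1}$ the pullback of the hyperplane class under $B\mu_p\to B\mathbb{G}_m$ and $u\in H^{1,1}$ a lift of $1$ through the connecting homomorphism.

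The delicate point is identifying the multiplicative relation satisfied by $u^2$. For odd $p$, the class $u$ sits in odd simplicial degree and graded commutativity forces $2u^2=0$, hence $u^2=0$, giving the first presentation. For $p=2$, $u^2\in H^{2,2}(B\mu_2,\p)$ lives in a two dimensional $\p$-vector space spanned by $\tau v$ and $\rho u$, and the issue is to determine the coefficients. My strategy is to argue by naturality in the base field, reducing to the universal case over a prime field, and then to combine the Bockstein relation between $u$ and $v$ (which reflects the Kummer presentation of $\mu_2$) with the Cartan formula applied to $\beta(u\cdot u)$ to pin down the linear combination. The vanishing of $\tau$ in characteristic $2$ is then automatic from $\mu_2(k)=\{1\}$.

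I expect the main obstacle to be precisely this determination of $u^2$ at the prime $2$: the relation $u^2=\tau v-\rho u$ is the genuinely new motivic phenomenon distinguishing the construction from its topological counterpart. The additive Gysin computation and the K\"unneth reduction are otherwise formal consequences of the machinery developed in the previous sections.
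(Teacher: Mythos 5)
Your additive computation coincides with the paper's: the same geometric model $B\mu_p=\mathcal{O}_{\mathbb{P}^\infty}(-p)-z(\mathbb{P}^\infty)$, the same Gysin/cofibration sequence whose connecting homomorphism is multiplication by $p\,c_1(\mathcal{O}(1))\equiv 0$ modulo $p$, the same splitting into short exact sequences, and essentially the same reduction for general $\mathcal{X}$ (the paper smashes the cofiber sequence with $\mathcal{X}$ and works with modules over $H^{*,*}(\mathcal{X},\p)$ throughout, which is the safe substitute for an external K\"unneth theorem). The odd-primary relation $u^2=0$ by graded commutativity is also the paper's argument.

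The gap is in the step you yourself flag as delicate: determining $u^2\in H^{2,2}(B\mu_2,\due)$. First, invoking the Cartan formula for $\beta$ here is circular in the logic of this development: the motivic Cartan formula (\fullref{cartan}) and the whole construction of the operations are derived \emph{from} the structure of $H^{*,*}(B\mu_p,\p)$, and the exotic $\tau$-- and $\rho$--terms in that formula are consequences of the very relation you are trying to prove. Second, even granting the Bockstein apparatus, the constraint it yields does not pin down the coefficients: writing $u^2=xv+yu$ and using $\beta u=v$, $\beta v=0$, the identity $\beta(u^2)=0$ only forces $y=\beta(x)$ and $\beta(y)=0$, which are satisfied by $x=\tau,\ y=\rho$ but equally by $x=y=0$; an independent nonvanishing input is indispensable. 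The paper supplies two such inputs. For the coefficient of $u$ it restricts along $\aff-0\cong(\aff-0)/\mu_2\to B\mu_2$, identifies the relevant group (after passing to the homotopy limit of open subsets of $\aff$) with $K_2^M(k(t))/2$, and reads off $y=\rho$ from the relation $\{t,t\}=\{-1,t\}$ in Milnor $K$--theory. For the coefficient of $v$ it \emph{enlarges} the base field to contain $\sqrt{-1}$ (so that $\rho=0$) and compares with \'etale cohomology of $B\mu_2$, where $u^2=\beta u=\delta u\neq 0$ by an explicit diagram chase; note this goes in the opposite direction from your proposed reduction to the prime field. Without some concrete computation of this kind the relation $u^2=\tau v-\rho u$ remains undetermined.
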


\begin{proof} It suffices to consider the case of $\mathcal{X}=X_+$
smooth scheme, essentially because of \cite[Lemma~1.16]{morvoe}
(see also \cite[Appendix~B]{io-for-grado}). To prove the statement of
the theorem as left $H^{*,*}(\mathcal{X},\p)$ modules, we are going
to use the existence of a cofibration sequence 
\begin{equation}\label{cof1}
(B\mu_p)_+\to (\mathcal{O}_{\mathbb{P}^\infty_k}(-p))_+\to
  \Th(\mathcal{O}(-p))\to S_s^1\wedge (B\mu_p)_+\cdots 
\end{equation} 
where $\mathcal{O}_{\mathbb{P}^\infty_k}(-1)$ is the dual of the
canonical line bundle over $\mathbb{P}^\infty_k$. 
By a vector bundle $V$ we sometimes mean its affinization
$\Spec(\mathrm{Symm}^*(V\,\wcheck{-}))$. This sequence is a consequence 
of the fact that
$B\mu_p=\mathcal{O}_{\mathbb{P}^\infty_k}(-p)-z(\mathbb{P}^\infty_k)$,
where $z$ is the zero section and $B\mu_p$ is constructed by means of
the representation $$\mu_p\hookrightarrow GL(\mathcal{O})=\mathbb{G}_m$$
(cf Voevodsky \cite[Lemma~6.3]{voe-oper}). Smashing the sequence \eqref{cof1}
with a smooth scheme $X_+$ we get a cofibration sequence
\begin{multline}\label{cof2}
X_+\wedge (B\mu_p)_+\longrightarrow X_+\wedge
(\mathcal{O}_{\mathbb{P}^\infty_k}(-p))_+\stackrel{f}{\longrightarrow}
 X_+\wedge \Th(\mathcal{O}(-p))\longrightarrow \\
S^1_s\wedge X_+\wedge (B\mu_p)_+\longrightarrow\cdots 
\end{multline}
which yields a long exact sequence 
\begin{multline}\label{bah}
\cdots\to \tilde{H}^{*-2,*-1}(X,A)\llbracket c\rrbracket
\stackrel{\hat{f}^*}{\longrightarrow} 
H^{*,*}(X,A)\llbracket c\rrbracket \stackrel{\alpha}{\longrightarrow} \\
H^{*,*}(X_+\wedge (B\mu_p)_+,A)\longrightarrow
\tilde{H}^{*-1,*-1}(X,A)\llbracket c\rrbracket \to\cdots
\end{multline}
for any abelian group $A$ and where $c\in H^{2,1}(\mathbb{P}^\infty,A)$ is
the first Chern class of $\mathcal{O}_{\mathbb{P}^\infty}(1)$. We
define $v\in H^{2,1}(B\mu_p,A)$ to be the image of $c$ and 
$u\in H^{1,1}(B\mu_p,\p)$ the class mapping to $1\in
H^{0,0}(\Spec k,\p)\llbracket c\rrbracket $ and restricting to $0$ in
$H^{1,1}(\Spec k,\p)\cong \{H^{*,*}(\Spec k,\p)\llbracket c\rrbracket \}^{1,1}$ via any
rational point $\Spec k\to B\mu_p$. The long exact sequence \eqref{bah}
splits in short exact sequences 
\begin{equation}\label{bah1}
0{\to}H^{*,*}(X,\p)\llbracket c\rrbracket {\to}H^{*,*}(X_+\wedge B\mu_p,\p){\to}
H^{*-1,*-1}(X,\p)\llbracket c\rrbracket {\to}0
\end{equation}
if $A=\p$ since $\hat{f}^*=0$, in that case. To see this, notice 
that $\hat{f}^*$ is the composition $z^*\circ f^*\circ t$ where 
$$t\co H^{*,*}(\mathbb{P}^\infty)\to
\tilde{H}^{*+2,*+1}(X_+\wedge \Th(\mathcal{O}(-p)))\tilde{=}H^{*+2,*+1}
(\Th(X\times\mathcal{O}(-p)))$$
is the Thom isomorphism (see Borghesi \cite[Corollary~1]{io-mor}) and
$$z\co X_+\wedge\mathbb{P}^\infty_+\to
  X_+\wedge\mathcal{O}_{\mathbb{P}^\infty_k} (-p)_+$$
is the zero section. This composition sends a
class $x$ to $x\cdot pc$ which is zero with $\p$ coefficients.
By means of the sequences \eqref{bah1} we prove the theorem as 
left $H^{*,*}(X,\p)$ modules.  
The exotic part of the theorem lies in the multiplicative structure of the
motivic cohomology at the
prime $2$, more specifically the relation $u^2=\tau v+\rho
u$. Since the multiplicative structure in motivic
cohomology is graded commutative (see Voevodsky
\cite[Theorem~2.2]{voe-oper}), at odd
primes we have $u^2=0$. Let $p=2$. The class $u^2$ belongs to the group
$H^{2,2}(B\mu_p,\p)$ which is isomorphic to
\begin{equation}
H^{0,1}(\Spec k,\p)v\oplus H^{1,1}(\Spec k,\p)u\oplus
H^{2,2}(\Spec k,\p)
\end{equation}
because of what we have just proved. By definition
of $u$, it restricts to $0$ on $\Spec k$, thus $u^2=xv+yu$ for
coefficients $x\in H^{0,1}(\Spec k,\due)$ and $y\in
H^{1,1}(\Spec k,\due)$. We wish to prove that $x=\tau$ and $y=\rho$ as
described in the statement of the theorem. 

\textbf{Proof that $y=\rho$}\qua
We
reduce the question to the group $H^{*,*}(\mathbb{A}^1-0,\due)$ by means
of the map 
\begin{equation}
\mathbb{A}^1-0\cong(\mathbb{A}^1-0)/\mu_2\to
\colim_i(\mathbb{A}^i-0)/\mu_2=B\mu_2
\end{equation} 
The class $u\in H^{1,1}(B\mu_p,\due)$ pulls
back to the generator $u_1$ of $$H^{1,1}(\aff-0,\due)\cong\due$$
because so does the similarly defined class $u_i\in
H^{1,1}(\mathbb{A}^i_k-0/\mu_2,\due)$ for each $i$: the sequence \eqref{cof1}
in this case reduces to $$\aff-0_+\to (\aff)_+\to \Th(\aff)=
\aff/(\aff-0)\to\dots$$ 
and the generator of  $H^{1,1}(\aff-0,\due)$ is precisely $u_1$, ie the
class coming from the Thom sheaf. The class $u$ pulls back to each
$u_i$ because of a lim$^1$ argument on the motivic cohomology of $B\mu_2$.
We have thus reduced the question to the following rather
curious lemma
\begin{lemma}
Let $u_1\in \tilde{H}^{1,1}(\aff-0,\due)\cong\due$ be the
generator. Then
$u_1^2=\rho u_1$ in $H^{2,2}(\aff-0,\due)$.
\end{lemma} 
One further reduction involves the use of the
canonical map $A\to\aff-0$ where $A$ is a sheaf whose motivic
cohomology is $\smash{\colim_{U\subset\aff\text{ open}}}H^{*,*}(U,\due)$. Because of
\cite{voe-iso}, $H^{i,j}(A,\due)$ is thus isomorphic to the higher Chow
group $CH^j(\Spec k(t),2j-i)$ and in 
particular $H^{i,i}(A,\due)\cong K_i^M(k(t))/(2)$. The sheaf $A$ can be
manufactured in many different non isomorphic ways as sheaves
although there is a canonical one. In any case, their class in 
$\mathcal{H}(k)$ coincides. Such class is known as the {\it homotopy
  limit} of the cofiltered category $\{U\}_{U\subset\aff\,open}$ and
denoted by holim$\,U$. It
comes equipped with a map $$\mathrm{holim}\,U\to U$$ for any 
such $U$ and the localizing sequence for motivic cohomology implies the
injectivity of the map $H^{*,*}(\aff-0,\due)\to
H^{*,*}(\mathrm{holim}\,U,\due)$. Therefore, it suffices to prove the
statement of the lemma in 
\begin{equation}
H^{2,2}(\mathrm{holim}\,U,\due)\cong CH^2(k(t),0)\otimes\due\cong 
K_2^M(k(t))/(2)
\end{equation}
With these
identifications we may represent $u_1$ by $t\in K_1^M(k(t))$  and show
that $t^2=-1\cdot t$, which is a known relation in
$K_2^M$.

\textbf{Proof that $x=\tau$}\qua
We have that $x\in H^{0,1}(\Spec k,\due)
=\mu_2(k)$ and this
group is zero if the characteristic of $k$ is $2$. If the characteristic is
different from $2$ then this group is $\due$ and thus it suffices to
show that $x\neq 0$. By contravariance of motivic cohomology, it is
enough to prove this statement up to replacing the base field $k$ with
a finite field extension. In particular, by assuming that
$\sqrt{-1}\in k$, we reduce the question to showing that $u^2\neq 0$,
since $\rho=0$ in this case. Passing to \'etale cohomology with $\due$
coefficients via the natural transformation from motivic cohomology
with $\due$ coefficients to \'etale cohomology with the same
coefficients, we are reduced to show that $u^2\neq 0$ in
$H^2_{\et}(B\mu_2,\due)$. All the long exact sequences involving
motivic cohomology we have used in this section are valid for \'etale
  cohomology as well. Since $u\in H^1_{\et}$, $\beta u=u^2$. Consider the
  long exact sequence 
\begin{multline}
\cdots \longrightarrow H^1_{\et}(B\mu_2,\mathbb{Z})
  \stackrel{\text{mod}\,2}{\longrightarrow}
  H^1_{\et}(B\mu_2,\due)\\
  \stackrel{\delta}{\longrightarrow}
  H^2_{\et}(B\mu_2,\due)\stackrel{2\cdot}{\longrightarrow}
  H^2_{\et}(B\mu_2,\due)\longrightarrow\cdots
\end{multline}  
we see that $\delta u\neq 0$. To see this we observe the following
diagram 
\begin{equation}
\xymatrix{ && H^1_{\et}(\Spec k,\due)\ar[d]^\alpha\\
H^1_{\et}(\Spec k,\mathbb{Z}) \ar[r]_\cong^\alpha \ar[rru]^{\text{mod}\,2}&
H^1_{\et}(B\mu_2,\mathbb{Z})\ar[r]^{\text{mod}\,2}&
H^1_{\et}(B\mu_2,\due)\ar[d]\\
&& H^0_{\et}(\Spec k,\due)}
\end{equation} 
where the vertical sequence of maps is \eqref{bah} with motivic
cohomology replaced by \'etale cohomology and is a short exact
sequence. The class $u\in
H^1_{\et}(B\mu_2,\due)$ is defined as the only class which maps to the
generator of $H^0_{\et}(\Spec k,\due)$ and is zero on the image of
$H^1_{\et}(\Spec k,\due)$, thus it is not in the image of the $\text{mod}\;2$
morphism. $\delta u\neq 0$ implies that
$\beta u\neq 0$ since $\delta u=v\in H^2_{\et}(B\mu_2,\mathbb{Z})$
which projects to the class $v$ with $\due$ coefficients. This
finishes the proof of \fullref{class}.
\end{proof}

\subsection{The dual algebra $\dual$}

Rather than considering motivic cohomology operations, we are
going to concentrate on their duals. Denote 
by $\am_m$ the $\p$  algebra of all the motivic cohomology operations and let 
$\am$ be a locally finite and free $H^{*,*}(\Spec k,\p)$ 
(simply written as $H^{*,*}$ from now on) submodule of it. Its
dual $\dual$ is the set of left $\hb$ graded module maps from $\am$ to $\hb$.
We are interested in the action of $\am$ on the motivic cohomology of
$B\mu_p$. Let $\theta\in\am$; its action on $H^{*,*}(B\mu_p,\p)$
is completely determined by $\theta(u^{e}v^i)$ for all $i$ and
$e\in\{0,1\}$, because of \fullref{class}. The module $\am$ we
are going to consider is in fact a $\p$ algebra and includes certain 
monomial operations denoted by $M_k$ for all nonnegative integers $k$ 
satisfying:
\begin{proposition}[see Voevodsky {{\cite[Lemma~12.3]{voe-oper}}}]
\label{proprieta}
\begin{enumerate}
\item $M_k(v)=M_k\beta(u)=v^{p^k}$, for all $k\geq 0$; 
\item if $\am\ni\theta\not\in \{M_k,M_k\beta,\,k\geq 0\}$ is a
  monomial, then $$\theta\cdot H^{*,*}(B\mu_p,\p)=0$$
\end{enumerate}
\end{proposition}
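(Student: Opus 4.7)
The plan is to reduce the statement to the action of elementary operations on the two module generators $u$ and $v$ of $H^{*,*}(B\mu_p,\mathbb{Z}/p)$, and then use a dimension-and-Cartan argument to rule out all other monomials. By \fullref{class}, as a left $H^{*,*}$-module this cohomology is free on $\{u^e v^i : e\in\{0,1\},\ i\geq 0\}$ (with the extra relation $u^2 = \tau v + \rho u$ absorbed at $p=2$), so any operation $\theta\in\am$ is determined by the tuple $(\theta(u),\theta(v))$ together with the Cartan formula alluded to in \fullref{vaff}. Two computations are immediate from the construction of $u$ and $v$: the Bockstein $\beta\co H^{1,1}(B\mu_p,\p)\to H^{2,1}(B\mu_p,\p)$ satisfies $\beta(u)=v$, since $v$ is by definition the image of the integral first Chern class under the connecting homomorphism of the short exact sequence $0\to\mathbb{Z}\to\mathbb{Z}\to\p\to 0$.

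Next I would identify the $M_k$ explicitly as the admissible monomial $P^{p^{k-1}}P^{p^{k-2}}\cdots P^p P^1$ (with $M_0=\id$), whose bidegree $(2p^k-2,p^k-1)$ matches the target. The key input is the instability relation $P^i(x)=x^p$ when $2i$ equals the first index of $x$ (an analogue of the topological unstable axiom, available here because $v$ is a first Chern class). Iterating this at the tower $v,\ v^p,\ v^{p^2},\ldots$ gives
\begin{equation}
P^{p^{k-1}}P^{p^{k-2}}\cdots P^1(v) = v^{p^k},
\end{equation}
which is exactly $M_k(v)=v^{p^k}$. Composing with $\beta(u)=v$ yields $M_k\beta(u)=v^{p^k}$, proving part (1).

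For part (2), the strategy is an induction on the length of an admissible monomial $\theta=P^{i_r}\beta^{\epsilon_r}\cdots P^{i_1}\beta^{\epsilon_1}$. Reduce first to the case $\theta(u)$ and $\theta(v)$ nonzero. Applying the innermost factor to $u$ or $v$ using $P^1(v)=v^p$ and the Cartan formula produces a sum of monomials $v^{j} u^{\epsilon}$; at each subsequent step one uses the formula $P^i(v^j)=\binom{j}{i}v^{j+i(p-1)}$ (from Cartan applied iteratively to $P^i(v\cdots v)$) together with the instability of the $P^i$ on classes in $H^{2,1}$. The key combinatorial fact is that $\binom{p^\ell}{i}\equiv 0\pmod p$ unless $i\in\{0,p^\ell\}$, so the only admissible sequences $(i_1,\ldots,i_r)$ that survive all the way through are those collapsing to a single tower of doubling exponents, i.e.\ $(1,p,p^2,\ldots,p^{k-1})$, which is $M_k$ itself (optionally prefixed by a single Bockstein that accounts for the case of starting on $u$ rather than $v$). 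All other admissible monomials must annihilate both generators.

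The main obstacle is the $p=2$ case: the relation $u^2=\tau v+\rho u$ means that the Cartan formula has mixed terms, and the coproduct on $\am$ itself is deformed (as noted in \fullref{vaff}). Consequently the expansion of $P^i$ on a product such as $v\cdot v^{j-1}$ acquires extra summands proportional to $\tau$ and $\rho$ in $H^{*,*}(k,\p)$, and one must verify that the binomial-collapse argument above is insensitive to these deformations. I expect this bookkeeping — essentially checking that the only admissible monomial of bidegree $(2p^k-2,p^k-1)$ acting nontrivially on $v$ is $M_k$ — to be where the proof truly requires care, while steps (1)--(3) above are formal once the action of $P^1$ and $\beta$ on the generators is pinned down.
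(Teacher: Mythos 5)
A preliminary remark: the manuscript does not actually prove this proposition --- it is quoted from Voevodsky \cite[Lemma~12.3]{voe-oper}, and the text states explicitly that only \fullref{class} receives a complete proof in this section. So the comparison is with the standard argument (Milnor's lens-space computation transported to $B\mu_p$), and that is indeed the route you take. Your part (1) is essentially complete: $\beta(u)=v$ together with iterated use of $P^i(x)=x^p$ for $x\in H^{2i,i}$ gives $M_k(v)=M_k\beta(u)=v^{p^k}$, once one grants the identification $M_k=P^{p^{k-1}}\cdots P^pP^1$, which the paper itself asserts later in the section.

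The genuine gap is in part (2), and it is not the $p=2$ bookkeeping you flag (which is in fact harmless on powers of $v$: since $\beta(v)=0$, all odd squares of $v^a$ vanish and the $\tau$- and $\rho$-correction terms of \fullref{cartan} drop out there). The missing step is that you never compute $P^i(u)$ for $i\geq 1$. Your induction only ever applies $\beta$ to $u$ and the $P^i$ to powers of $v$; but to show that a monomial $P^{i_r}\cdots P^{i_1}$ with no inner Bockstein annihilates $u$, and to control the action on the mixed classes $uv^j$ via the Cartan formula, you must know $P^i(u)=0$ for all $i\geq 1$. This does not follow from degree counting: $P^i(u)$ lies in $H^{1+2i(p-1),\,1+i(p-1)}(B\mu_p,\p)$, which by \fullref{class} contains for instance the summand $\p\cdot uv^{i(p-1)}$ as well as $H^{1,1}(\Spec k,\p)\cdot v^{i(p-1)}$, so the target group is a priori nonzero; and the weight instability ($P^i(w)=0$ for $w\in H^{*,j}$ with $i>j$) does not dispose of $P^1(u)$, since $u$ has weight $1$. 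This vanishing is precisely the point where the actual proof must evaluate the total power operation $P$ of \eqref{totale} directly on the one-dimensional class $u$, rather than argue formally from the Cartan formula, and it is the missing ingredient in your induction. Two smaller omissions: you should record $\beta(v)=0$ (because $v$ lifts integrally) in order to exclude monomials containing an internal Bockstein, and note that monomials with an outermost Bockstein die on $v^{p^k}$ for the same reason.
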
  
This enables us to define canonical classes
$\xi_i\in\mathcal{A}_{2(p^i-1),p^i-1}$ for $i\geq 0$ and
$\tau_j\in\mathcal{A}_{2p^j-1,p^j-1}$ for $j\geq 0$ as the duals of
$M_k$ and $M_k\beta$, respectively. Monomials in the classes $\xi_i$ 
and $\tau_j$
are generators of $\dual$ as a left $H^{*,*}$ module. More
precisely, one first proves that
$\omega(I):=\tau_0^{\epsilon_0}\xi_1^{r_1}\tau_1^{\epsilon_1}\xi_2^r\cdots$
form a free $H^{*,*}$ module basis of
$\dual$, where $I=(\epsilon_0,r_1,\epsilon_1,r_2,\ldots)$ ranges over all the
infinite sequences of nonnegative integers $r_1$ and
$\epsilon_i\in\{0,1\}$  (cf \cite[Theorem~12.4]{voe-oper}). Then one 
derives the complete description of $\dual$:
\begin{theorem}[cf Voevodsky {{\cite[Theorem~12.6]{voe-oper}}}]
\label{duale} 
The graded left $H^{*,*}$ algebra  $\dual$ is (graded) commutative 
with respect to the first grading and is presented by generators
$\xi_i\in\mathcal{A}_{2(p^i-1),p^i-1}$ and
$\tau_i\in\mathcal{A}_{2p^i-1,p^i-1}$ and with relations
\begin{enumerate}
\item $\xi_0=1$; 
\item $\tau_i^2=
\begin{cases}
0 & \text{for}~ p\neq 2\\
\tau\xi_{i+1}+\rho\tau_{i+1}+\rho\tau_0\xi_{i+1} & \text{for}~ p=2
\end{cases}$
\end{enumerate}  
\end{theorem}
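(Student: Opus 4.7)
The plan is to leverage the free-module basis $\{\omega(I)\}$ already stated just before the theorem (cf Voevodsky \cite[Theorem~12.4]{voe-oper}), where $\omega(I) = \tau_0^{\epsilon_0}\xi_1^{r_1}\tau_1^{\epsilon_1}\xi_2^{r_2}\cdots$, and to identify the minimal set of polynomial relations that cut $\dual$ out of the free graded-commutative $H^{*,*}$-algebra on the generators $\xi_i$ and $\tau_j$. Given the basis, the theorem reduces to verifying: (i) graded commutativity of the generators in the first grading, (ii) $\xi_0 = 1$, and (iii) the stated identity for $\tau_i^2$. A rank count in each bidegree then confirms that no further relations are needed.

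First, $\xi_0 = 1$ is immediate from the duality pairing, since $M_0$ is (up to scalar) the identity operation in $\am$, so $\xi_0$ is the unit of $\dual$. Graded commutativity between distinct generators follows by computing the $\dual$-product via the coproduct on $\am$ read off from the action on $H^{*,*}(B\mu_p\wedge B\mu_p,\p)$ using \fullref{proprieta}: both orderings of $\xi_i \xi_j$, $\xi_i \tau_j$, and $\tau_i \tau_j$ (with $i\neq j$) hit the same basis element $\omega(I)$ up to the sign dictated by the parities of the first gradings. For odd $p$, this forces $\tau_i^2 = 0$: the first grading of $\tau_i$ is $2p^i - 1$, which is odd, so graded commutativity gives $2\tau_i^2 = 0$, and we are working mod $p$ with $p$ odd.

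For $p=2$, the computation is genuinely different because, by \fullref{class}, $H^{*,*}(B\mu_2,\p)$ carries the non-trivial relation $u^2 = \tau v + \rho u$, whereas at odd primes $u^2 = 0$. I would compute $\tau_i^2$ by pairing it against the basis $\{M_k, M_k\beta\}$ of $\am$ using the coproduct
\[
\psi_*\co \am \longrightarrow \am \otimes_{H^{*,*}} \am,
\]
itself extracted from the action on $H^{*,*}(B\mu_2 \wedge B\mu_2,\p)$ via K\"unneth. Writing $u_1, u_2$ for the pullbacks of $u$ along the two projections, the contribution of $u_1 \cdot u_2$ to the cohomology of the product is what encodes the coproduct, and the additional terms $\tau v_\ell + \rho u_\ell$ arising whenever $u_\ell^2$ must be reduced propagate through the dualization to produce exactly the predicted correction $\tau\xi_{i+1} + \rho\tau_{i+1} + \rho\tau_0\xi_{i+1}$ in $\tau_i^2$. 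A bidegree sanity check confirms that each of these three summands sits in the correct bidegree $(4\cdot 2^i - 2, 2\cdot 2^i - 2)$ after accounting for the left $H^{*,*}$-action lowering bidegrees.

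The main obstacle will be this last bookkeeping step: carefully matching, for every monomial $\theta \in \{M_k, M_k\beta\}$, the value $\tau_i^2(\theta) = \sum \tau_i(\theta')\tau_i(\theta'')$ computed from $\psi_*(\theta)$ against the value $(\tau\xi_{i+1} + \rho\tau_{i+1} + \rho\tau_0\xi_{i+1})(\theta)$, making essential use of $u^2 = \tau v + \rho u$ at each appearance. Two specializations serve as consistency checks: if $\sqrt{-1}\in k$ so that $\rho = 0$, the formula collapses to $\tau_i^2 = \tau\xi_{i+1}$, consistent with the ``expected'' dual Steenrod algebra structure alluded to in \fullref{vaff}; and if $\mathrm{char}(k)=2$ so that $\tau = 0$, only the $\rho$-terms survive, again matching the Hopf-algebroid (as opposed to Hopf-algebra) structure mentioned there.
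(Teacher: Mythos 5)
Your proposal is correct and follows essentially the same route as the paper: odd $p$ is dispatched by graded commutativity of the odd-first-degree classes $\tau_i$, and at $p=2$ the relation is extracted from $u^2=\tau v+\rho u$ in $H^{*,*}(B\mu_2,\mathbb{Z}/2)$ together with \fullref{proprieta}. The only presentational difference is that the paper packages the term-by-term dualization you describe into Milnor's coaction $\lambda(x)=\sum_i e_i(x)\otimes e^i$, whose multiplicativity turns the single identity $\lambda(u)^2=\lambda(\tau v+\rho u)$ into all the pairings $\langle\tau_i^2,\theta\rangle$ at once.
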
   

\begin{remark}
The degree of the product $a\gamma$ between an element $a\in\hb$
  and $\gamma\in\dual$ is $(\gamma_1-a_1,\gamma_2-a_2)$.
\end{remark}

\subsection{The algebra $\am$}

To get this kind of information on $\dual$ we
really have to be more specific on the algebra $\am$ we are
considering. As mentioned earlier, Voevodsky \cite[Section~5]{voe-oper}
first defined a total power operation 
$$P\co H^{2*,*}(-,\p)\to H^{2p*,p*}(-\wedge\mathcal{B},\p)$$
Taking $\mathcal{B}$ to be $BS_p$, this becomes a morphism
\cite[Theorem~6.16]{voe-oper}
\begin{equation}
\begin{split}
P\co H^{2*,*}(X,\p)\to 
\begin{cases}
\{H^{*,*}(X,\p)\llbracket c,\!d\rrbracket
  /(c^2{=}\tau d{+}\rho c)\}^{4*,2*}, & p=2\\
\{H^{*,*}(X,\p)\llbracket c,\!d\rrbracket
  /(c^2)\}^{2p*,p*}, & p\neq 2
\end{cases}
\end{split}
\end{equation}
for classes $c\in H^{2p-3,p-1}(BS_p,\p)$ and $d\in
H^{2p-2,p-1}(BS_p,\p)$ and any simplicial sheaf $X$. 
The unusual relation in the motivic cohomology of $BS_p$ at the 
prime $p=2$ is consequence of the one in $H^{*,*}(B\mu_2,\due)$ 
described in \fullref{class}.
Thus, in analogy to the topological case, we can define operations
$P^i$ and $B^i$ by the equality
\begin{equation}\label{operazioni}
P(w)=\sum_{i\geq 0}P^{i}(w)d^{n-i}+B^{i}(w) cd^{n-i-1}
\end{equation}
for $w\in H^{2n,n}(X,\p)$. Then one proves \cite[Lemma~9.6]{voe-oper}
that $B^i=P^i\beta$. For an arbitrary class $x\in H^{*,*}(X,\p)$, we
define
$$P^i(x):=\sigma_s^{-m_1}\sigma_t^{-m_2}
(P^i(\sigma_s^{m_1}\sigma_t^{m_2}(x)))$$    
where $\sigma_s^{m_1}\sigma_t^{m_2}(x)\in H^{2*,*}(X,\p)$ for some $*$
and $\sigma$ are the suspension isomorphisms we introduced at the
beginning of this section. As left $\hb$ module we let 
$\am$ to be $\hb\otimes_{\p}\p\,\langle \beta,P^i\rangle$, where 
$\p\,\langle \beta,P^i\rangle$ is the $\p$ subalgebra (of all the
motivic cohomology operations $\am_m$) generated by $\beta$ and $P^i$. It
turns out that $\am$ is a free left $\hb$ module (cf
\cite[Section~11]{voe-oper}). Notice that there is canonical embedding
$i\co \hb\hookrightarrow\am$ sending $a\in\hb$ to the operation
$ax=\pi^*a\cup x$, where $\pi\co X\to \Spec k$ is the structure morphism.
The image $i(\hb)$ does not belong to the center of $\am$: the
multiplication in $\am$ is the composition of the cohomology
operations, therefore $(\theta a)x=\theta(ax)=\theta(a\cup
x)=\sum_k\smash{\theta_k'\act a}\otimes\theta_k''(x)$ where
$\sum_k\theta_k'\otimes\theta_k''$ is the image of $\theta$ through
the morphism $\hat{\psi}^*\co \am_m\to\am_m\otimes_{\hb}\am_m$ induced by the
multiplication (cf \cite[Section~2]{voe-oper})
$$\hat{\psi}\co  K(A(j_1),i_1)\wedge K(A(j_2),i_2)\to K(A(j_1+j_2),i_1+i_2)$$
of the motivic Eilenberg--MacLane simplicial
sheaf defined at the end of \fullref{puggia}. Here
$\theta\act a$ means the motivic cohomology operation $\theta$ acting
on $a$ seen as a cohomology class of a scheme. Although, the 
$\p$ vector space $\hb$ does not lie in the center of $\am$, the 
commutators are sums of terms of monomials of the kind $aP^I$. Notice
that $\am$ is very much not (even graded) commutative: the relations
between the products of $P^i$ are called {\it Adem relations} and are
very complicated already in the topological case. For the expression
in the algebraic situation see 
\cite[Theorems~10.2,10.3]{voe-oper}. It
turns out that the classes $M_k$ are $P^{p^{k-1}}P^{p^{k-2}}\cdots
P^pP^1$. This fact is needed to prove that $\dual$ is a free left $\hb$
module on the classes $\omega(I)$. 

To understand the product of elements in the dual, we
need to endow $\am$ of an extra structure: a left $\hb$ module map
$\psi^*\co \am\to \am\otimes_{\hb}\am$ called {\it diagonal} or {\it
  comultiplication}. We wish to define $\psi^*$ to be the restriction
of $\hat{\psi}^*$ to $\am$. In order to do this we have to show that
the image of $\hat{\psi}^*$ is contained in $\am$ when the domain is
restricted to $\am\otimes_{\hb}\am$. This can be checked by
explicitely computing the value of the 
the total power operation $P$ on the exterior product of motivic
cohomology classes $x$ and $y$ of a simplicial sheaf $X$ 
and the property that $P(x\wedge
y)=\Delta^*(P(x)\wedge P(y))$ (Voevodsky~\cite[Lemma~5.9]{voe-oper}), where
$\Delta\co X\to X\times_kX$ is the diagonal. 
We include here the complete expression of
$\psi^*$ since the one in \cite[Proposition~9.7]{voe-oper} has
a sign error at the prime $2$:
\begin{theorem}\label{cartan}
Let $u$ and $v$ be motivic cohomology classes. Then for $p$ odd we
have
\begin{equation}
\begin{aligned}
P^i(u\wedge v)&=\sum_{r=0}^iP^r(u)\wedge P^{i-r}(v)\\
\beta(u\wedge v)&=\beta(u)\wedge v+(-1)^{\mathrm{first\,deg}(u)}u\wedge\beta v.
\end{aligned}
\end{equation}
If $p=2$ then
\begin{align}
\Sq^{2i}(u{\wedge} v)&=\sum_{r=0}^i\Sq^{2r}(u){\wedge}
\Sq^{2i-2r}(v)+\tau\sum_{s=0}^{i-1} \Sq^{2s+1}(u){\wedge}
\Sq^{2i-2s-1}(v)\\
\Sq^{2i+1}(u{\wedge} v)&=\sum_{r=0}^i\bigl(\Sq^{2r+1}(u){\wedge}
\Sq^{2i-2r}(v)+\Sq^{2r}(u){\wedge}
\Sq^{2i-2r+1}(v)\bigr)\\[-2ex]
&\hskip 125pt+\rho\sum_{s=0}^{i-1}\Sq^{2s+1}(u) {\wedge}
\Sq^{2i-2s-1}(v).\notag
\end{align}
\end{theorem}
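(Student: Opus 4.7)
The approach is to derive both Cartan formulae by comparing coefficients in the two sides of the multiplicativity identity
\begin{equation*}
P(x\wedge y)=\Delta^*\bigl(P(x)\wedge P(y)\bigr)
\end{equation*}
proved in \cite[Lemma~5.9]{voe-oper}, after expanding each total power operation via the definition \eqref{operazioni}. Using the suspension isomorphisms $\sigma_s$ and $\sigma_t$, first reduce to the bi-homogeneous case $x\in H^{2n,n}(X,\p)$ and $y\in H^{2m,m}(Y,\p)$, so that the expansions of $P(x)$ and $P(y)$ in the variables $c,d$ are finite polynomials and the left hand side is literally $\sum_i P^i(x\wedge y)\,d^{n+m-i}+B^i(x\wedge y)\,cd^{n+m-i-1}$.

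Multiplying the expansions and applying $\Delta^*$ (which simply identifies the two copies of $c,d$), the right hand side equals
\begin{equation*}
\sum_{r,s}\Bigl(P^r(x){\wedge} P^s(y)\,d^{n+m-r-s}+\bigl(P^r(x){\wedge} B^s(y){+}B^r(x){\wedge} P^s(y)\bigr)cd^{n+m-r-s-1}+B^r(x){\wedge} B^s(y)\,c^2d^{n+m-r-s-2}\Bigr).
\end{equation*}
At odd primes, \fullref{class} gives $c^2=0$, so the last summand vanishes; extracting the coefficient of $d^{n+m-i}$ yields the Cartan formula for $P^i(u\wedge v)$, and the coefficient of $cd^{n+m-i-1}$, together with $B^i=P^i\beta$, produces the Leibniz rule for $\beta$ (the sign $(-1)^{\mathrm{first\,deg}(u)}$ comes from moving $\beta$ past $u$ using the graded commutativity of motivic cohomology, \cite[Theorem~2.2]{voe-oper}).

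At $p=2$ the relation $c^2=\tau d+\rho c$ of \fullref{class} converts the odd-odd cross terms $B^r(x)\wedge B^s(y)\,c^2d^{n+m-r-s-2}$ into two extra contributions: the $\tau d$ summand feeds $\tau\sum_s B^s(x){\wedge} B^{i-1-s}(y)$ into the coefficient of $d^{n+m-i}$, producing the $\tau$-correction in $\Sq^{2i}(u\wedge v)$; the $\rho c$ summand feeds $\rho\sum_s B^s(x){\wedge} B^{i-1-s}(y)$ into the coefficient of $cd^{n+m-i-1}$, producing the $\rho$-correction in $\Sq^{2i+1}(u\wedge v)$. Identifying $P^r$ and $B^r$ with $\Sq^{2r}$ and $\Sq^{2r+1}$ respectively rewrites the expansion in the notation of the statement.

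The main obstacle is the careful bookkeeping at $p=2$: one must verify that the shifts produced by $c^2=\tau d+\rho c$ yield exactly the sums $\sum_{s=0}^{i-1}$ in the statement and, crucially, that neither $\tau$ nor $\rho$ is erroneously assigned to the wrong formula or picks up an extra sign during the graded commutativity manipulations. This is precisely the point at which \cite[Proposition~9.7]{voe-oper} had a sign error. Two natural sanity checks are to specialize to the case $\sqrt{-1}\in k$ (where $\rho=0$, so only the $\tau$-correction to $\Sq^{2i}$ must survive) and to $\chara(k)=2$ (where $\tau=0$, so only the $\rho$-correction to $\Sq^{2i+1}$ must survive), checking both limiting cases against direct computation of the action on $H^{*,*}(B\mu_2,\due)\otimes H^{*,*}(B\mu_2,\due)$.
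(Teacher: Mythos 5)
Your proposal is correct and follows essentially the same route the paper indicates: the paper derives \fullref{cartan} precisely by expanding the multiplicativity identity $P(x\wedge y)=\Delta^*(P(x)\wedge P(y))$ of \cite[Lemma~5.9]{voe-oper} via the definition \eqref{operazioni} and using the relation $c^2=\tau d+\rho c$ in the $p=2$ case to generate the $\tau$ and $\rho$ correction terms. Your coefficient extraction (the $\tau d$ part of $c^2$ feeding the even formula and the $\rho c$ part feeding the odd one, each with the shift to $\sum_{s=0}^{i-1}$) matches the stated formulae.
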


As usual, the strange behaviour of the motivic cohomology of $B\mu_2$ at
the prime $2$ results in more complicated description of $\psi^*$ at
the same prime. However, even in this case, $\psi^*$ is associative
and commutative. In such a situation we can define a product structure
on $\dual$ by means of the transposed map of $\psi^*$: let $\psi_*$ be
the map $\dual\otimes_{\hb}\dual\to\dual$ such that
\begin{equation}
\langle \psi_*(\eta_1\otimes\eta_2), \theta\rangle=\langle
\eta_1\otimes\eta_2,
\psi^*(\theta)\rangle=\sum\eta_1(\theta')\eta_2(\theta'') 
\end{equation}
for any $\theta\in\am$. 
This makes $\dual$ in an
associative and graded commutative (with respect to the first grading) 
$\hb$ algebra. 

\textbf{Proof of \fullref{duale} (sketch)}\qua
Since $\tau_i$'s first degrees are odd, we have
$\tau_i^2=0$ at odd primes because of graded commutativity. At the
prime $2$ the situation is more complicated. The key trick goes back to
Milnor's original paper \cite{milnor}: we define a morphism 
$\lambda\co  H^{*,*}(X,\p)\to H^{*,*}(X,\p)\otimes_{\hb}\dual$ as
\begin{equation}
\lambda(x)=\sum_{i}e_i(x)\otimes e^i
\end{equation} 
where $\{e_i\}$ are a basis of $\am$ over $\hb$ and $e^i$ the dual
basis. The morphism $\lambda$ does not depend on the choice of $e_i$,
thus we can take $\{e_i\}=\{\beta, P^I\}$ for all indices
$I=(i_1,i_2,\ldots)$. Because of \fullref{proprieta}, we can
explicitely compute the morphism $\lambda$ in the case of $X=B\mu_p$,
at least on the classes $u$ and $v$: $\lambda(u)=u\otimes 1+\sum_i
v^{p^i}\otimes \tau_i$ and $\lambda(v)=v\otimes 1+\sum_i
v^{p^i}\otimes \xi_i$. We now need some equality in which $\tau_i^2$
will appear. We can take such expression to be $\lambda(u)^2$. We
are interested to compare it with $\lambda(u^2)=\lambda(\tau v+\rho u)$.
The crucial property of $\lambda$ we use here is multiplicativity
with respect to the cup product; than we derive the relation
$\tau_i^2$ by setting the homogeneous components of the
expressions to be equal. Just like for the case of $\am$, the dual 
algebra $\dual$ has two actions of $\hb$: the left one $a\otimes\xi\to
\smash{a\stackrel{l}{\cdot}\xi}$ where $a\xi$ is
defined by $$\langle \smash{a\stackrel{l}{\cdot}\xi},\theta\rangle=\langle\xi,
a\theta\rangle=a\langle\xi,\theta\rangle$$
The right action is $\xi\otimes a\to \xi\stackrel{r}{\cdot} a$, with 
\begin{equation}
\langle\xi\stackrel{r}{\cdot} a, \theta\rangle=\langle\xi,\theta a\rangle
=\langle\xi,\sum_k(\theta_k'\act a)\otimes\theta_k''\rangle
=\sum_k\theta_k'\act a\langle\xi,\theta_k''\rangle
\end{equation}
We form the tensor product $\dual\otimes_{\hb}\dual$ according with
  these actions.
To complete the picture, we shall compute the diagonal $\phi_*\co \dual\to 
\dual\otimes_{\hb}\dual$ (the left factor is understood to be endowed 
with the right
action of $\hb$ and the right factor with the left action), defined 
as the transposed of the
multiplication in $\am$: if $\gamma\in\dual$ then its action on a
product of operations $\alpha'$, $\alpha''$ is expressed by
$$\langle\gamma,
\alpha'\alpha''\rangle=\sum_i\langle\gamma'_i,\alpha'\rangle\langle
\gamma_i'',\alpha''\rangle$$
for some classes $\gamma'$ and $\gamma''$. We then define
$\phi^*(\gamma)=\sum_i \gamma'_i\otimes\gamma_i''$. To compute what
$\phi^*(\tau_i)$ and $\phi^*(\xi_i)$ are, we use a strategy already
employed: find some equality involving the motivic cohomology classes
$u$ and $v$ of $B\mu_p$ containing expressions like $\langle \tau_i,
\alpha'\alpha''\rangle$ and $\langle \gamma_i',\alpha'\rangle\langle 
\gamma''_i,\alpha''\rangle$ as coefficients of certain monomials and
then get the result by setting equal the homogeneous components of the
expressions. We can write the action of a motivic cohomology operation
$\theta$ on a class $x\in H^{*,*}(X,\p)$ in such a way to have
elements of $\am$ appearing: 
\begin{equation}
\theta(x)=\sum_i\langle\xi_i,\theta\rangle M_i(x)+\langle
\tau_i,\theta\rangle M_i\beta(x)
\end{equation}
Once again we take $X$ to be $B\mu_p$ since we know everything about
it and prove the equalities
\begin{equation}\label{rottura}
\begin{split}
\theta(u^{p^n})&=\langle\xi_0,\theta\rangle u^{p^n}+\sum_i\langle
\tau_i^{p^n},\theta\rangle v^{p^{i+n}}\\
\theta(v^{p^n})&=\sum_i\langle\xi_i^{p^n},\theta\rangle v^{p^{i+n}}
\end{split}
\end{equation}
by means of \fullref{proprieta}. The expressions we are looking
for are $\gamma\theta(u)$ and $\gamma\theta(v)$  for $\gamma$,
$\theta\in\am$. Each of them can be written in two ways: as
$(\gamma\theta)(u)$ and as $\gamma(\theta(u))$. Using equations
\eqref{rottura} we obtain two equalities between polynomials in $u$
and $v$. Equality between their coefficients give the following
formulae:
\begin{proposition}
\begin{equation}
\begin{split}
\phi_*(\tau_k)&=\tau_k\otimes 1+\sum_i\xi_{k-i}^{p^i}\otimes\tau_i\\
\phi_*(\xi_k)&=\xi_{k-i}^{p^i}\otimes\xi_i
\end{split}
\end{equation}
\end{proposition}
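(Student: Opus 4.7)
The strategy is the one sketched in the paragraph preceding the proposition: use the two expressions for the composite $\gamma\theta$ acting on the canonical classes $u,v\in H^{*,*}(B\mu_p,\p)$, and extract $\phi_*(\tau_k)$ and $\phi_*(\xi_k)$ by matching coefficients in the free $\hb$--module basis $\{u^{\epsilon}v^{p^n}\}_{\epsilon\in\{0,1\},\,n\ge 0}$ supplied by \fullref{class}. Throughout, I use associativity of composition in $\am$, the pairing identity $\langle\phi_*(\gamma),\alpha'\otimes\alpha''\rangle=\langle\gamma,\alpha'\alpha''\rangle$, and the computations \eqref{rottura}.

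First I would apply \eqref{rottura} to the class $v$: on one side one reads off $(\gamma\theta)(v)=\sum_{k}\langle\xi_k,\gamma\theta\rangle\, v^{p^k}$, while evaluating the same expression as $\gamma(\theta(v))$ by two nested applications of \eqref{rottura} produces
\[
\gamma(\theta(v))=\sum_{i,j}\langle\xi_j,\theta\rangle\,\langle\xi_i^{p^j},\gamma\rangle\, v^{p^{i+j}}.
\]
Grouping terms with $i+j=k$ and invoking the pairing identity to rewrite $\langle\xi_k,\gamma\theta\rangle$ as $\langle\phi_*(\xi_k),\gamma\otimes\theta\rangle$ gives, after relabelling $i\mapsto k-i$, the formula $\phi_*(\xi_k)=\sum_i\xi_{k-i}^{p^i}\otimes\xi_i$, since the $v^{p^k}$--coefficients of an element of $H^{*,*}(B\mu_p,\p)$ are uniquely determined.

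Next I would repeat the argument for the class $u$. By \eqref{rottura}, $(\gamma\theta)(u)=\langle\xi_0,\gamma\theta\rangle\,u+\sum_k\langle\tau_k,\gamma\theta\rangle\,v^{p^k}$; expanding $\gamma(\theta(u))$ by first applying $\theta$ to $u$ and then $\gamma$ to each summand, one gets a contribution $\langle\xi_0,\theta\rangle\cdot\gamma(u)=\sum_i\langle\tau_i,\gamma\rangle\, v^{p^i}+\langle\xi_0,\gamma\rangle\langle\xi_0,\theta\rangle u$ from the $u$--term together with, for each $j$, a contribution $\langle\tau_j,\theta\rangle\sum_i\langle\xi_i^{p^j},\gamma\rangle\, v^{p^{i+j}}$ from $\gamma(v^{p^j})$. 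Collecting coefficients of $v^{p^k}$ and of $u$, and invoking $\langle\phi_*(\tau_k),\gamma\otimes\theta\rangle=\langle\tau_k,\gamma\theta\rangle$, isolates
\[
\phi_*(\tau_k)=\tau_k\otimes 1+\sum_i\xi_{k-i}^{p^i}\otimes\tau_i,
\]
the first summand coming from $\gamma(u)$ (with $\xi_0=1$) and the remainder from the nested expansion above.

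The main technical subtlety, which I expect to be the principal obstacle, is the non-centrality of the $\hb$--action on $\am$: the scalars $\langle\xi_j,\theta\rangle,\langle\tau_j,\theta\rangle\in\hb$ do not commute with $\gamma$, so in expanding $\gamma(\theta(v))$ the operation $\gamma$ must be distributed across products $a\cdot x$ via the comultiplication rule $\gamma(a\cdot x)=\sum_k(\gamma'_k\act a)\cdot\gamma''_k(x)$. Fortunately \fullref{proprieta} says that only the monomials $M_k$ and $M_k\beta$ act nontrivially on $H^{*,*}(B\mu_p,\p)$, so the ``commutator'' terms produced in this redistribution either fall outside the $\hb$--linear span of $\{v^{p^k},u\cdot v^{p^k}\}$ or vanish by bidegree reasons; this keeps the homogeneous-component matching argument valid. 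Once this bookkeeping is handled, equating coefficients in the two expansions of $(\gamma\theta)(u)$ and $(\gamma\theta)(v)$ for all $\gamma,\theta\in\am$ delivers the two claimed formulae, completing the proof.
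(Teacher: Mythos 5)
Your proposal follows exactly the paper's strategy: evaluate $\gamma\theta$ on $u$ and $v$ both as $(\gamma\theta)(u)$, $(\gamma\theta)(v)$ and as $\gamma(\theta(u))$, $\gamma(\theta(v))$ via the formulae \eqref{rottura}, then match coefficients of $u$ and of the $v^{p^k}$ in the free module basis from \fullref{class}, using the defining adjunction between $\phi_*$ and the product in $\am$. This is correct and is the same argument the paper sketches (including your correct observation that the non-centrality of $\hb$ is absorbed into the twisted tensor product $\dual\otimes_{\hb}\dual$ with the right/left actions), apart from a harmless dropped scalar $\langle\xi_0,\theta\rangle$ in one intermediate line.
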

The computation of the diagonal of the dual motivic Steenrod algebra
is crucial to find relations between certain special classes $Q_i$ 
in $\am$ we are now going to define. 
Let $E=(\epsilon_0,\epsilon_1,\ldots)$ and $R=(r_1,r_2,\ldots)$. and
$\tau(E)\xi(R)=\prod_{i\geq 0}\tau_i^{\epsilon_i}\prod_{j\geq
  1}\xi_j^{r_j}$. 
\begin{definition}
The following notation will be used:
\begin{enumerate}
\item $(r_1,r_2,\ldots,r_n)\in\am$ will denote 
the dual class to $\xi_1^{r_1}\xi_2^{r_2}\cdots\xi_n^{r_n}$;
\item if $E=(\epsilon_0,\epsilon_1\cdots,\epsilon_m)$, where
  $\epsilon_i\in\{0,1\}$, then $Q_E$ will denote the dual to  
$\tau_0^{\epsilon_0}\tau_1^{\epsilon_1}\cdots\tau_m^{\epsilon_m}$;
\item $Q_i$ will be the dual to $\tau_i$.
\end{enumerate}
\end{definition}
 
The most important properties of $Q_t^{\top}$, the topological
cohomology operations defined exactly as $Q_t$, are: 
\begin{enumerate}
\item $(Q_t^{\top})^2=0$,
\item $\psi^*(Q_t^{\top})=Q_t^{\top}\otimes 1+1\otimes Q^{\top}_t$, ie
$Q_t^{\top}$ is primitive and
\item If $M$ is a complex manifold with tangent bundle $T_M$ 
is such that all the characteristic numbers are divisible by $p$ and 
the integer $\deg(s_{p^n-1}(T_M))$ (see the definition of it 
given just after \fullref{vaff}) is not divisible by $p^2$, then 
$Q_t^{\top} t_\nu\neq 0$ in the cone of the map $S^{2t}\to \Th(\nu)$,
  coming from the Thom--Pontryagin construction, where $\nu$ is the normal
  bundle, with complex structure, of some embedding 
  $M\hookrightarrow\mathbb{R}^N$ for $N$ large enough and $t_\nu$ is
  its Thom class in $H^{2m,m}(\Th(\nu),\p)$. 
\end{enumerate}
We are interested in the operations $\{Q_t\}_t$ because they appear 
in the proof of
\fullref{nullo}, that is crucial for the Voevodsky's program
to the Bloch--Kato conjecture, in which property (3) is used. It turns 
out that the operation $Q_t$
satisfies property (1) (use the coproduct of $\dual$), but fails to
satisfy property (2) at the prime $2$, if $\sqrt{-1}\not\in
k$ for $t>0$. Nonetheless, property (3) holds at any prime, and this is what
we need for the application in \fullref{nullo}. To compute the
coproduct $\psi^*Q_t$ we use its adjointness with the multiplication
of the dual algebra $\dual$. This results in:
\begin{proposition}\label{qt}
\begin{enumerate}
 \item if $p$ is odd, $Q_t$ are primitive;
\item if $p=2$
\begin{equation}
\psi^*(Q_t)=Q_t\otimes 1+1\otimes Q_t+\sum_{h=1}^t\hskip
7pt\rho^h\hskip 9pt
\Bigl(\hskip -1cm\sum_{\substack{I,J\\I\cup J
=\{t-h,t-h+1,\ldots,t-1\}\\I\cap J=\{t-h\}}}\hskip -1cm Q_I\otimes Q_J\Bigr)
\end{equation}
\end{enumerate}
\end{proposition}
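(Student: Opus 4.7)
The plan is to use the adjointness between the comultiplication $\psi^*\co \am\to\am\otimes_{\hb}\am$ and the multiplication $\psi_*\co \dual\otimes_{\hb}\dual\to\dual$ already set up in this section, namely
\[
\langle \alpha\otimes\beta,\psi^*(Q_t)\rangle \;=\; \langle \psi_*(\alpha\otimes\beta),Q_t\rangle \;=\;\langle \alpha\beta,Q_t\rangle
\]
for $\alpha,\beta\in\dual$. Writing $\psi^*(Q_t)=\sum c_{I,J}\,Q_I\otimes Q_J$ in the $\hb$-basis of $\am\otimes_{\hb}\am$ dual to $\{\omega(I)\otimes\omega(J)\}$, the pairing above identifies $c_{I,J}$ with the coefficient of $\tau_t$ in the expansion of the product $\omega(I)\cdot\omega(J)$, taken in the basis $\{\omega(K)\}$ of $\dual$. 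The entire problem is thus reduced to identifying which basis-monomial products contain a $\tau_t$ summand, and with what scalar.

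A first reduction I would make is to show that only pairs of pure $\tau$-monomials $\omega(I)=\tau(I):=\prod_{i\in I}\tau_i$ can contribute. Any $\xi$-factor surviving in $\omega(I)$ or $\omega(J)$ cannot be cancelled later: the relations of \fullref{duale} only allow one to rewrite $\tau_i^2$ as a polynomial involving $\xi_{i+1}$, never the converse. A nontrivial $\xi$-exponent therefore propagates through every monomial in the reduced expansion and blocks any $\tau_t$ contribution, so $c_{I,J}=0$ as soon as $\omega(I)$ or $\omega(J)$ is not purely in $\tau$'s.

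For $p$ odd, $\tau_i^2=0$ forces $\tau(I)\tau(J)=0$ whenever $I\cap J\neq\emptyset$; otherwise $\tau(I)\tau(J)=\tau(I\sqcup J)$, which equals $\tau_t$ exactly when $\{I,J\}=\{\emptyset,\{t\}\}$. This yields $\psi^*(Q_t)=Q_t\otimes 1+1\otimes Q_t$, proving (1). For $p=2$ the key observation is that the unique $\xi$-free summand of $\tau_i^2=\tau\xi_{i+1}+\rho\tau_{i+1}+\rho\tau_0\xi_{i+1}$ is $\rho\tau_{i+1}$. In the expansion
\[
\tau(I)\tau(J)=\tau(I\triangle J)\prod_{i\in I\cap J}\tau_i^2,
\]
every square must therefore be replaced by $\rho\tau_{i+1}$, producing $\rho^{|I\cap J|}\,\tau(I\triangle J)\,\tau((I\cap J)+1)$; whenever $(I\triangle J)\cap((I\cap J)+1)$ is nonempty, fresh squares appear and the process recurses, always forced to select the $\rho\tau$ branch.

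The remaining, and main, obstacle is the combinatorial bookkeeping that identifies exactly which pairs $(I,J)$ yield a nonzero coefficient of $\tau_t$ at the end of the reduction (rather than some $\tau_s$ with $s\neq t$ or some $\tau$-product of length $\geq 2$). I would argue by induction on the number $h\geq 1$ of reduction steps needed. After the first step one has $\rho\,\tau(I\triangle J)\,\tau_{s+1}$ where $\{s\}=I\cap J$; either this already equals $\tau_t$, which forces $h=1$ and $I=J=\{t-1\}$, or $s+1\in I\triangle J$, spawning a new $\tau_{s+1}^2$ and prolonging the chain. Iterating, the induction shows that the contributing pairs are precisely those with $I\cap J=\{t-h\}$ and $I\cup J=\{t-h,t-h+1,\dots,t-1\}$, the elements outside $t-h$ being arbitrarily partitioned between $I$ and $J$, each such pair contributing the factor $\rho^h$. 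Combining these with the ``disjoint'' contributions $Q_t\otimes 1+1\otimes Q_t$ recovers the formula of part (2).
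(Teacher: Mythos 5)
Your argument is correct and follows exactly the route the paper indicates (and leaves almost entirely to the reader): dualize $\psi^*$ to the multiplication of $\dual$, so that the coefficient of $Q_I\otimes Q_J$ in $\psi^*(Q_t)$ is the coefficient of $\tau_t$ in $\omega(I)\omega(J)$, and then extract that coefficient from the relations of \fullref{duale}, where only the $\xi$-free branch $\rho\tau_{i+1}$ of $\tau_i^2$ can survive. The combinatorial identification of the contributing pairs $(I,J)$ is right (it can also be checked instantly from the weight count $\sum_{i\in I}2^i+\sum_{j\in J}2^j=2^t$ with multiplicities at most two), so the proposal is a complete version of the paper's one-line proof.
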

To prove property (3) we use a result of Voevodsky
\cite[Corollary~14.3]{voe-oper}:
\begin{proposition}\label{porc}
Let $X$ be a scheme and $V$ a rank $m$ vector bundle over $X$. If
$t_V$ is the Thom class of $V$ in $H^{2m,m}(\Th(V),\p)$ then
$$(0,\ldots,0,\stackrel{n}{1})(t_V)=
s_{p^n-1}(V)\in
H^{2m+2(p^n-1),m+p^n-1}(\Th(V),\p)$$ 
\end{proposition}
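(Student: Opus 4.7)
The plan is to follow the classical topological blueprint, adapted to the motivic setting via the tools built up earlier in the text. First, I would exploit naturality: the Thom class, the operation $(0,\ldots,0,\stackrel{n}{1})$ (dual to $\xi_n$), and the Newton polynomial class $s_{p^n-1}(V)$ all pull back functorially along maps of vector bundles. Hence it suffices to verify the identity for the universal rank $m$ bundle over an appropriate classifying sheaf. By iterating the projective bundle formula (the motivic splitting principle), one further reduces to the case $V=L_1\oplus\cdots\oplus L_m$ with each $L_j$ a line bundle, since $H^{*,*}$ of the flag bundle injects into its subring generated by the split summands.

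Second, I would exploit multiplicativity on both sides of the claimed equality under direct sum. From \fullref{ma} we have the canonical identification $\Th(L_1\oplus\cdots\oplus L_m)\cong \Th(L_1)\wedge\cdots\wedge \Th(L_m)$ under which $t_V=t_{L_1}\wedge\cdots\wedge t_{L_m}$, while on the Chern side $s_{p^n-1}(V)=\sum_j c_1(L_j)^{p^n-1}$ (the Newton polynomial in the Chern roots). Applying $(0,\ldots,0,\stackrel{n}{1})$ to the wedge product requires the Cartan-type formula derived by dualizing the coproduct
\begin{equation*}
\phi_*(\xi_n)=\sum_{i=0}^{n}\xi_{n-i}^{p^i}\otimes\xi_i
\end{equation*}
computed in the proposition preceding this theorem. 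On a wedge of Thom classes of line bundles all the lower-index contributions collapse for bidegree reasons (the operations dual to $\xi_{n-i}^{p^i}$ with $0<i<n$ land in groups that are zero on $\Th(L_j)$ below the critical weight), leaving only the diagonal terms in which $(0,\ldots,0,\stackrel{n}{1})$ strikes a single factor and the identity acts on the rest.

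Third, I would settle the line bundle case. For a line bundle $L$ with $x=c_1(L)\in H^{2,1}(X,\p)$, the Thom isomorphism $H^{*,*}(X,\p)\cong\tilde H^{*+2,*+1}(\Th(L),\p)$ (multiplication by $t_L$) reduces the computation to finding the class $(0,\ldots,0,\stackrel{n}{1})(t_L)/t_L=x^{p^n-1}$ in $H^{2(p^n-1),p^n-1}(X,\p)$. For this I would pull back along the classifying map $X\to BGL_1$, i.e.\ reduce to the universal tautological line bundle over $\mathbb{P}^\infty$, and use the explicit description of the motivic cohomology of $(B\mu_p)_+$ in \fullref{class} together with the defining property of the $M_k$'s in \fullref{proprieta}. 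Since $M_k(v)=v^{p^k}$ and $v$ is the universal Chern class class, the dual statement $\langle\xi_n,M_n\rangle=1$ forces $(0,\ldots,0,\stackrel{n}{1})$ to send $v$ (and hence any first Chern class) to its $p^n$-th power after the unit is stripped off via the Thom isomorphism, giving exactly $x^{p^n-1}\cdot t_L$.

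The main obstacle will be the line bundle calculation at the prime $2$, where the exotic relation $u^2=\tau v+\rho u$ in $H^{*,*}(B\mu_2,\due)$ and the non-primitive coproducts recorded in \fullref{duale} and \fullref{qt} threaten to inject $\tau$- and $\rho$-correction terms into the Cartan expansion and into the identification of $t_L$ with the Euler class. I would need to check that each such correction lands in a bidegree that is forced to be zero on $\Th(V)$ (e.g.\ using $H^{i,j}=0$ for $i>2j$ on smooth schemes, as invoked in the proof of \fullref{zero}), so that the odd-primary computation goes through verbatim and produces the characteristic class $s_{p^n-1}(V)$ with integer, not $\tau$-weighted, coefficient.
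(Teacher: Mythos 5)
A preliminary remark: the paper never proves this proposition — it is quoted as Voevodsky's \cite[Corollary~14.3]{voe-oper} and used as a black box in the discussion of property (3) of the $Q_t$'s. So there is no in-text argument to compare yours against line by line; your overall architecture (naturality, reduction to the universal bundle, the splitting principle, multiplicativity under $\oplus$ via \fullref{ma}, and an explicit line-bundle computation on $\mathbb{P}^\infty$ using \fullref{class} and \fullref{proprieta}) is the standard one and is essentially how this is done in \cite{voe-oper}.

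There is, however, one substantive error in your step two. You invoke $\phi_*(\xi_n)=\sum_i\xi_{n-i}^{p^i}\otimes\xi_i$ as the ``Cartan-type formula'' controlling the action of $(0,\ldots,0,\stackrel{n}{1})$ on a smash product. But $\phi_*$ is the diagonal of $\dual$ dual to the \emph{composition} product of $\am$: it tells you how $(0,\ldots,0,\stackrel{n}{1})$ decomposes when written as a composite $\alpha'\alpha''$ of operations, not how it acts on an external product. The Cartan formula is governed by the other coproduct, the $\psi^*$ of \fullref{cartan}, which is dual to the \emph{commutative multiplication} of $\dual$. Computing with that one: the coefficient of a term $P^{R_1}\otimes P^{R_2}$ in $\psi^*\bigl((0,\ldots,0,\stackrel{n}{1})\bigr)$ is the coefficient of $\xi_n$ in $\xi(R_1)\xi(R_2)=\xi(R_1+R_2)$, which forces $R_1+R_2=(0,\ldots,0,\stackrel{n}{1})$; hence the operation is primitive at odd primes, and this — not a bidegree collapse — is exactly what makes $s_{p^n-1}$ additive under Whitney sum. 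At $p=2$ the relation $\tau_{n-1}^2=\tau\xi_n+\rho\tau_n+\rho\tau_0\xi_n$ does inject correction terms such as $\tau\,Q_{n-1}\otimes Q_{n-1}$ into $\psi^*$ (compare \fullref{qt}), so your worry in the last paragraph is legitimate; but these terms involve $Q$'s applied to Thom classes of line bundles, and $Q_{n-1}(t_L)$ lands in $H^{2^n-1,2^{n-1}-1}(X,\due)$, which vanishes on a smooth $X$ since $2^n-1>2(2^{n-1}-1)$. With that repair — use $\psi^*$, not $\phi_*$ — and with the line-bundle case pinned down as you indicate (the Thom class of $\mathcal{O}(1)$ on $\mathbb{P}^\infty$ is the Chern class $c$ itself, and $\langle\xi_n,M_n\rangle=1$ gives $c\mapsto c^{p^n}=c^{p^n-1}\cdot c$), your outline is sound and would reproduce the cited result.
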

Given this result, to prove property (3) we use the equality
\begin{equation}
Q_t=[Q_0,(0,\ldots,0,\stackrel{t}{1})]=Q_0(0,\ldots,0,\stackrel{t}{1})-
(0,\ldots,0,\stackrel{t}{1})Q_0 
\end{equation}
which happens to hold at any prime $p$. The general formulae for the
commutators can be found in \cite[Corollary~4]{io-mor} and, at the
prime $2$, differ from their topological counterparts. The degree of
$Q_0$ is $(1,0)$, so $Q_0 t_V=0$ if $X$ is smooth by the Thom
isomorphism and degree considerations. Thus, we are reduced
to prove that $Q_t(t_\nu)=Q_0(0,\ldots,0,1)(t_\nu)\neq
0$, where in this algebraic case $\nu$ is a ``normal'' bundle suitably
defined for this purpose (see Voevodsky \cite{on2tors} or Borghesi
\cite{io-for-grado}).
By \fullref{porc} and because we know that $Q_0$ is the
Bockstein we are reduced to show that
$s_{p^n-1}(\nu)$ is nonzero, (we know
that because it is the opposite of the same characteristic number of
$T_X$ which has nonzero degree by assumption) and that it is not the reduction 
modulo $p$ of a class in $H^{2*,*}(\Th(\nu),\mathbb{Z}/{p^2})$. This
requires a short argument using the assumption on the degree of
$s_{p^n-1}(T_X)$.

A natural question to ask is whether $\am_m=\am$, that is if all the
bistable motivic cohomology operations are those of the $\p$ vector space
$\am$. In the case the characteristic of the base field is zero, this
result has been claimed few times, the latest of which is in
\cite[Lemma~2.2]{voe-disp}. In the general case, the canonical inclusion
$\am\hookrightarrow\am_m$ makes the latter a graded left $\am$
module. Moreover, when motivic cohomology is representable in 
$\mathcal{H}(k)$, eg if $k$ is a perfect field, we know that such 
inclusion is split in the category of graded left $\am$ modules 
(combine \cite[Remark~5.2]{io-mor} with \cite{io-perf}). In fact,
$\am_m$ is a graded free left $\am$ module because of
\cite[Theorem~4.4]{milmoore}.

\subsection{Final remarks}

\fullref{finale*} asserts the representability of motivic
cohomology groups in the category $\mathcal{H}(k)$. In this last
section we will mention to two more interesting aspects, which we have
not planned to cover thoroughly in this manuscript. We have been
pretty vague about motivic cohomology since we were just interested in
representing it in a suitable category simply as functor with values 
in abelian groups. However, the reader should know that such
cohomology theory is expected to have several properties encoded by the
{\it Beilinson Conjectures}. In particular, one of them states that
the motivic cohomology of a smooth scheme $X$ should be isomorphic to 
the {\it Zariski} hypercohomology of $X$ with coefficients in some
complex of sheaves. In \fullref{motcoho} motivic cohomology    
has been defined to the {\it Nisnevich} hypercohomology of $X$ with
coefficients in a complex of sheaves $A(j)$. In general, these groups
differ from the ones obtained by taking the Zariski
hypercohomology. An important result of Voevodsky in \cite{libro}
states that, if $X$ is a smooth scheme over a perfect field, and
$D_*$ is a complex of Nisnevich sheaves {\it with transfers} and 
with homotopy invariant homology sheaves then
$\mathbb{H}_{\Nis}^*(X,D)\cong H_{\Zar}^*(X,D)$. The homotopy invariance
of the homology sheaves of a complex of Nisnevich sheaves with
transfers is strictly related to the concept of the complex being
$\aff$ local. In fact, if the base field is perfect, for such
complexes the two notions are equivalent. More tricky is the condition
for a sheaf to have transfers. These supplementary data on sheaves 
relates motivic cohomology to
algebraic cycles and to the classical theory of motives (see Voevodsky's
use of Rost's results on the motif of a Pfister quadric in
\cite{on2tors}). These considerations suggest that the sheaves of the
complexes $A(j)$ should have transfers and the complexes have homotopy
invariant homology sheaves. Now,
congruence \eqref{sgiang} in particular implies
``representability'' of motivic cohomology in the category
$\mathcal{D}_+(\mathcal{N}_k)$ and \fullref{finale*} in the
category  $\mathcal{D}_+^{\smash{\aff}}(\mathcal{N}_k)$. To preserve this
property even when switching to the definition of motivic cohomology as Zariski
hypercohomology (as opposed to Nisnevich hypercohomology), Voevodsky's
theorem indicates we should work just with sheaves with transfers and
complexes with homotopy invariant homology sheaves. The
category of Nisnevich sheaves of abelian groups with transfers
$\mathcal{N}_k^{\tr}$ is an
abelian subcategory of 
$\mathcal{N}_k$ with enough injectives. Thus, we can consider its
derived category of bounded below chain complexes. It is nontrivial to
show that motivic cohomology is representable in 
$\mathcal{D}_+(\mathcal{N}_k^{\tr})$. In this case the functor
$\mathbb{Z}[X]$ is replaced by an appropriate ``free'' sheaf with transfers
$\mathbb{Z}_{\tr}[X]$. The further restriction to complexes with
homotopy invariant homology sheaves is more straightforward as it may
be encoded in a localizing functor, making the procedure very similar
to the $\aff$ localization of $\mathcal{H}_s(k)$. The outcome is a
category denoted by $DM_+(k)$. For the details of these
constructions, see Suslin--Voevodsky \cite[Theorem~1.5]{sus-voe}.
Using the terminology introduced in that paper, the sheaf $A(j)$ is
then defined to be
$$\smash{\mathbb{Z}(1)\overset{L}{\underset{\tr}{\otimes}}\stackrel{j}{\cdots}
\overset{L}{\underset{\tr}{\otimes}}\mathbb{Z}(1)\overset{L}
{\underset{\tr}{\otimes}}A}$$
where
$\mathbb{Z}(1):=C_*(\mathbb{Z}_{\tr}[\mathbb{G}_m])[-1]$ and $C_*(-)$
is the ``homotopy invariant homology sheaves'' localizing functor. 
An advantage of the category $DM_+(k)$ over $\mathcal{H}(k)$ is that
it is triangulated, whereas the latter it is not although the latter has
fibration and cofibration sequences. To overcome this disadvantage
there are some procedures to {\it stabilize} $\mathcal{H}(k)$ and make
it triangulated in a way that cofibration sequences become exact
triangules. Once again it is possible to prove that representability
of motivic cohomology is preserved, always under the assumption of
perfectness of the base field (see \cite{io-for-grado}). This 
makes the {\it stable homotopy
category of schemes} an effective working place for using homotopy
theory on algebraic varieties and consistently exploited in 
\cite{io-mor} as well as \cite{io-for-grado}.   


\nocite{mazza-weibel-voe}

\bibliographystyle{gtart}
\bibliography{link}

\end{document}